\numberwithin{equation}{section}
\theoremstyle{plain}   
\newtheorem{bigthm}{Theorem}   
\newtheorem{theorem}[equation]{Theorem}  
\newtheorem{corollary}[equation]{Corollary}     
\newtheorem{lemma}[equation]{Lemma}         
\newtheorem{proposition}[equation]{Proposition} 
\theoremstyle{definition}
\newtheorem{definition}[equation]{Definition}
\theoremstyle{remark}
\newtheorem{remark}[equation]{Remark}
\newcommand{\Fil}{\operatorname{Fil}}
\newcommand{\Aut}{\operatorname{Aut}}
\newcommand{\Hom}{\operatorname{Hom}}
\newcommand{\Gal}{\operatorname{Gal}}
\newcommand{\Spec}{\operatorname{Spec}}
\newcommand{\TF}{\operatorname{TF}}
\newcommand{\TR}{\operatorname{TR}}
\newcommand{\THH}{\operatorname{THH}}
\newcommand{\TP}{\operatorname{TP}}
\newcommand{\HH}{\operatorname{HH}}
\newcommand{\HP}{\operatorname{HP}}
\newcommand{\Z}{\mathbb{Z}}
\newcommand{\Q}{\mathbb{Q}}
\newcommand{\C}{\mathbb{C}}
\newcommand{\Zp}{\mathbb{Z}_p}
\newcommand{\Qp}{\mathbb{Q}_p}
\newcommand{\Cp}{C}
\newcommand{\Fp}{\mathbb{F}_p}
\newcommand{\T}{\mathbb{T}}
\newcommand{\holim}{\operatornamewithlimits{holim}}
\newcommand{\cy}{\operatorname{cy}}
\newcommand{\id}{\operatorname{id}}
\newcommand{\tr}{\operatorname{tr}}
\newcommand{\res}{\operatorname{res}}
\newcommand{\Fr}{\operatorname{Fr}}
\begin{document}

\title{Topological Hochschild homology and \\the Hasse-Weil zeta
  function} 

\author{Lars Hesselholt}

\address{Nagoya University, Japan, and University of
Copenhagen, Denmark} 
\email{larsh@math.nagoya-u.ac.jp}

\thanks{Assistance from DNRF Niels Bohr Professorship is gratefully
  acknowledged} 

\subjclass[2010]{Primary 11S40, 19D55; Secondary 14F30}

\maketitle

\section*{Introduction}

In this paper, we consider the Tate cohomology of the circle group
acting on the topological Hochschild homology of schemes. We show that
in the case of a scheme smooth and proper over a finite field, this cohomology
theory naturally gives rise to the cohomological interpretation of the
Hasse-Weil zeta function by regularized determinants envisioned by
Deninger~\cite{deninger}. In this case, the periodicity of the zeta
function is reflected by the periodicity of said cohomology theory,
whereas neither is periodic in general.

Let $X$ be a scheme. Connes' periodic cyclic homology of $X$ may be
defined to be the anticommutative graded ring $\HP_*(X)$ given by the
Tate cohomology groups
$$\HP_i(X) = \hat{H}^{-i}(\T,\HH(X))$$
in the sense of Greenlees~\cite{greenlees,nikolausscholze} of the circle
group $\T$ acting on the Hochschild spectrum
$\HH(X)$. By analogy, we consider the anticommutative graded ring
$\TP_*(X)$ given by the Tate cohomology groups
$$\TP_i(X) = \hat{H}^{-i}(\T,\THH(X))$$
of $\mathbb{T}$ acting on B\"{o}kstedt's topological Hochschild
spectrum $\THH(X)$~\cite{bokstedt,h3}. The latter is
an implementation of Waldhausen's philosophy to replace the initial ring
of algebra $\Z$ by the initial ring of higher algebra
$\mathbb{S}$. The graded ring $\TP_*(X)$ is the abutment of the
Tate spectral sequence
$$E_{i,j}^2 = H^{-i}(\mathbb{P}_{-\infty}^{\,\infty}(\C),\THH_j(X))
\Rightarrow \TP_{i+j}(X),$$
which is multiplicative and converges conditionally in the sense of
Boardman~\cite{boardman}. The $E^2$-term is periodic with periodicity
operator given by multiplication by
$$t = c_1(\mathcal{O}(1)) \in H^2(\mathbb{P}^{\,\infty}(\C),\Z) =
H^2(\mathbb{P}_{-\infty}^{\,\infty}(\C),\Z).$$
In Connes' theory, the periodicity element $t$ is an infinite cycle
and is represented by a unique homotopy class $v \in \HP_{-2}(\Z)$,
the multiplication by which is Connes' $S$-operator. But in the
topologically refined situation, the element $t$ does not in general
survive the spectral sequence, and, if it does, is typically not
represented by a unique homotopy class. In particular, the graded ring 
$\TP_*(X)$ is generally not periodic. However, by contrast to Connes'
theory, the graded ring $\TP_*(X)$ is, in favorable situations,
equipped with meromorphic Frobenius operators
$$\begin{xy}
(-12,0)*+{ \TP_*(X) }="1";
(12,0)*+{ \TP_*(X). }="2";
{ \ar^-(.55){\varphi_p} "2";"1";};
\end{xy}$$
The operator $\varphi_p$ is obtained from the \emph{cyclotomic}
structure of $\THH(X)$ by a process that bears some resemblance to
analytic continuation. In particular, it does not always exist, and,
when it does, its construction requires some work.

Suppose now that $X$ is defined over a finite field $k$ of order
$q = p^r$. In this situation, the periodicity element $t$ is an infinite
cycle and is represented by a preferred homotopy class
$v \in \TP_{-2}(k)$, the definition of which is given in
Section~\ref{dividedbottsection} below. Moreover, there is a canonical
identification of the ring $\TP_0(k)$ with the ring $W$ of $p$-typical
Witt vectors in $k$. It follows that $\TP_*(X)$ is a $2$-periodic
anticommutative graded $W$-algebra. In this situation, the Frobenius
operator $\varphi_p$ is defined after inverting $p$. Moreover,
the $W$-linear operator $\varphi_p^r$ and the weight filtration of
$\TP_*(X)$ determines the action of the geometric Frobenius
$\Fr_q \colon X \to X$ on $\TP_*(X) \otimes_{W,\iota}\C$. 

\begin{bigthm}\label{main}Let $k$ be a finite field of order
$q = p^r$, let $W$ be its ring of $p$-typical Witt vectors, and let
$\iota \colon W \to \C$ be a choice of embedding. If
$f \colon X \to \Spec(k)$ is a smooth and proper morphism of
schemes, then, as meromorphic functions on $\,\C$,
$$\zeta(X,s) =\frac{\det_{\infty}(s\cdot\id - \,\Theta \mid
  \TP_{\operatorname{od}}(X)
  \otimes_{W,\iota}\C)}{\det_{\infty}(s\cdot\id - \,\Theta \mid
  \TP_{\operatorname{ev}}(X) \otimes_{W,\iota}\C)},$$
where $\Theta$ is a $\,\C$-linear graded derivation such that
$q^{\Theta} = \Fr_q^*\,$ and
$\,\Theta(v) = \frac{2\pi i}{\log q} \cdot v$. 
\end{bigthm}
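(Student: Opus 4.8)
The plan is to reduce the identity to the crystalline Lefschetz trace formula. The mechanism is that, for $X$ smooth and proper over $k$, the motivic filtration on $\TP(X)$ identifies $\TP_*(X)\otimes_{W,\iota}\C$, on the associated graded of its weight filtration, with a two-periodicization of the crystalline cohomology of $X$ carrying its geometric Frobenius, and that the periodicity element $v$ then plays the role of the infinite-dimensional ``archimedean'' factor in Deninger's formalism: $\det_{\infty}$ of $s\cdot\id-\Theta$ on a single periodic line turns out to be a finite Euler factor, and multiplying these over the crystalline cohomology groups reproduces $\zeta(X,s)$.

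First I would fix the structure of $\TP_*(X)\otimes_{W,\iota}\C$. Since $f$ is smooth and proper, the groups $H^i_{\operatorname{crys}}(X/W)$ are finitely generated over $W$ and vanish outside $0\le i\le 2\dim(X)$. The motivic filtration on $\TP(X)$ of Bhatt--Morrow--Scholze, extended to schemes by Zariski descent, has $j$-th graded piece naturally equivalent to $R\Gamma_{\operatorname{crys}}(X/W)[2j]$ up to a Breuil--Kisin twist pulled back from $\Spec(W)$; a weight argument --- any differential in the associated spectral sequence would be a Frobenius-equivariant map between crystalline cohomology groups of different cohomological degree, hence between pure modules of different weights, hence zero --- shows that it degenerates rationally. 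Hence $\TP_{\operatorname{ev}}(X)\otimes_{W,\iota}\C$ is a filtered $\C$-vector space with associated graded $\bigl(\bigoplus_{i\text{ even}}H^i_{\operatorname{crys}}(X/W)\otimes_{W,\iota}\C\bigr)\otimes_{\C}\C[v^{\pm 1}]$ (with $v$ of degree $-2$), a finite-dimensional space tensored with the periodic line, and likewise $\TP_{\operatorname{od}}(X)\otimes_{W,\iota}\C$ with the odd $i$. The geometric Frobenius $\Fr_q\colon X\to X$ is the $q$-power Frobenius, a morphism of $k$-schemes, so it induces a $W$-linear --- hence after $\iota$ a $\C$-linear --- graded ring endomorphism $\Fr_q^*$ of $\TP_*(X)$; on $\Spec(k)$ it is the identity, so it fixes $v$ and acts trivially on the Breuil--Kisin twists, while on the summand $H^i_{\operatorname{crys}}(X/W)$ it is the $r$-th power of the crystalline Frobenius, whose eigenvalues are the Weil numbers of absolute value $q^{i/2}$, and in particular nonzero. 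Thus $\Fr_q^*$ is invertible on $\TP_*(X)\otimes_{W,\iota}\C$; as recalled in the introduction, the weight filtration provides a graded derivation $\Theta$ with $q^{\Theta}=\Fr_q^*$ and $\Theta(v)=\tfrac{2\pi i}{\log q}\,v$, and the regularized determinants below are unaffected by the choice, since two such derivations differ by one all of whose eigenvalues lie in $\tfrac{2\pi i}{\log q}\Z$, a shift that does not change a regularized product over a full coset of that lattice.

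The second step is the computation on one periodic line. If $x$ is a $\Fr_q^*$-eigenvector in $\TP_0(X)\otimes_{W,\iota}\C$, or in $\TP_1(X)\otimes_{W,\iota}\C$, with eigenvalue $\beta=q^{\theta}$, then $\C[v^{\pm1}]\cdot x$ is $\Theta$-stable, and since $\Theta$ is a derivation with $\Theta(v^n x)=(\theta+\tfrac{2\pi i n}{\log q})\,v^n x$, the spectrum of $\Theta$ on this line is the full arithmetic progression $\theta+\tfrac{2\pi i}{\log q}\Z$. This is the \emph{periodic}, two-sided case of Deninger's regularized-product computation --- precisely the case, distinguished from the one-sided one producing $\Gamma$-factors, in which invertibility of $v$ is used --- and, the normalization $\Theta(v)=\tfrac{2\pi i}{\log q}v$ being calibrated so that the exponential and root-of-unity prefactors collapse, it yields
$$\det\nolimits_{\infty}\bigl(s\cdot\id-\Theta \mid \C[v^{\pm1}]\cdot x\bigr)\;=\;1-q^{-s}\beta.$$
Taking the product over the eigenlines --- or, in general, over the generalized-eigenspace decomposition of the associated graded, on which $\det_{\infty}$ depends only --- one obtains
$$\det\nolimits_{\infty}\bigl(s\cdot\id-\Theta \mid \TP_{\operatorname{ev}}(X)\otimes_{W,\iota}\C\bigr)\;=\;\prod_{i\text{ even}}\det\bigl(1-q^{-s}\Fr_q^*\mid H^i_{\operatorname{crys}}(X/W)\otimes_{W,\iota}\C\bigr),$$
and the same identity with $\operatorname{ev}$ replaced by $\operatorname{od}$ and $i$ even by $i$ odd.

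Finally I would invoke the crystalline Lefschetz trace formula (Grothendieck; Katz--Messing), by which $\#X(\mathbb{F}_{q^m})=\sum_i(-1)^i\tr\bigl((\Fr_q^*)^m\mid H^i_{\operatorname{crys}}(X/W)\otimes\Q\bigr)$, so that
$$\zeta(X,s)\;=\;\prod_i\det\bigl(1-q^{-s}\Fr_q^*\mid H^i_{\operatorname{crys}}(X/W)\otimes_{W,\iota}\C\bigr)^{(-1)^{i+1}}$$
as an identity of rational functions of $q^{-s}$, the factors having integral coefficients and so being independent of $\iota$. Dividing the odd-degree display of the previous step by the even-degree one identifies this alternating product with the quotient $\det_{\infty}(s\cdot\id-\Theta\mid\TP_{\operatorname{od}}(X)\otimes_{W,\iota}\C)\big/\det_{\infty}(s\cdot\id-\Theta\mid\TP_{\operatorname{ev}}(X)\otimes_{W,\iota}\C)$; since each regularized determinant is an entire function of $s$ and $\zeta(X,s)$ is rational in $q^{-s}$, the equality holds as meromorphic functions on $\C$. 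I expect the main obstacle to be the first step: the rational degeneration of the motivic filtration, and, more delicately, pinning down the Frobenius action on the crystalline graded pieces together with the existence of the graded derivation $\Theta$ --- that is, checking that, via the weight filtration, the ring automorphism $\Fr_q^*$ genuinely has the form $q^{\Theta}$ for a derivation $\Theta$ normalized on $v$ as stated. The regularized-determinant bookkeeping of the second step, although it requires care with branch choices and signs so that the Euler factor comes out with exactly the right normalization, is then essentially Deninger's.
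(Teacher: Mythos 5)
Your proposal is correct in its overall architecture, and that architecture coincides with the paper's: identify $\TP_*(X)\otimes_{W,\iota}\C$ with a two-periodicization of crystalline cohomology compatibly with Frobenius, use Deninger's regularized-product identity $\det_{\infty}(s\cdot\id-\Theta\mid \C[v^{\pm1}]\cdot x)=1-q^{-s}\beta$ (this is exactly Proposition~\ref{deningerformula} and the remark following it), and conclude by the crystalline cohomological expression for $\zeta(X,s)$ (the paper cites Berthelot's Th\'eor\`eme~VII.3.2.3 rather than Katz--Messing, but this is the same input). Where you genuinely diverge is in the first, and hardest, step. You import the Bhatt--Morrow--Scholze motivic filtration on $\TP(X)$ with crystalline graded pieces and argue degeneration by weights. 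The paper cannot and does not do this: that filtration is flagged in a footnote as subsequent work, and the whole point of Sections 3--6 is to manufacture a substitute by hand. Concretely, the paper builds a ``Hodge'' descent spectral sequence $E^2_{i,j}=\bigoplus_m(\lim_{n,F}H^{-i}(X,W_n\Omega_X^{j+2m}))\otimes_{W,\iota}\C\Rightarrow\TP_{i+j}(X)\otimes_{W,\iota}\C$ out of B\"okstedt periodicity, the computation of $\TR^n_*(\mathcal{O}_X;p)$ for smooth $X$, quasi-coherence and descent for the Tate construction, and the ``analytic continuation'' comparison $\TR^n_i\to\hat H^{-i}(C_{p^n},-)$; it then matches this $E^2$-term with the $E_2$-term of the Illusie--Raynaud conjugate spectral sequence converging to $H^*_{\operatorname{crys}}(X/W)$ via the higher Cartier isomorphism. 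Notably, the paper never proves (or needs) degeneration of either spectral sequence: multiplicativity of $\det(\id-q^{-s}\Fr_q^*)$ on exact sequences already forces the determinants on the abutments to agree with those on the $E_2$-terms, which is all that enters the zeta function. Your weight-purity degeneration argument is fine given Katz--Messing, but it is doing more work than required. What your route buys is brevity and conceptual transparency (it is essentially the later direct proof of Tabuada, modulo his use of the K\"unneth theorem); what the paper's route buys is independence from the then-unavailable motivic filtration and an explicit equivariant-homotopy-theoretic description of where the de~Rham--Witt groups live inside $\TP$.

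One smaller point: you flag the existence of the normalized derivation $\Theta$ as a possible obstacle, but this is handled exactly as in Deninger --- decompose $\Fr_q^*$ into semisimple and unipotent parts on the finite-dimensional spaces $\TP_0(X)\otimes_{W,\iota}\C$ and $\TP_1(X)\otimes_{W,\iota}\C$, take any branch of $\log_q$ on the former and the logarithm series on the latter, and extend to a graded derivation using $\Theta(v)=\frac{2\pi i}{\log q}v$; your observation that the regularized product over a full coset of $\frac{2\pi i}{\log q}\Z$ is insensitive to the residual ambiguity is the correct justification that the statement is well posed.
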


The graded derivation $\Theta$ is determined by the
specified value on the periodicity class $v$ and by its values on 
$\TP_0(X) \otimes_{W,\iota}\C$ and $\TP_1(X) \otimes_{W,\iota}\C$,
which are both finite dimensional $\C$-vector spaces. To specify the
latter, we follow Deninger~\cite[Section~2.12]{deninger} and
decompose the Frobenius $\Fr_q^* = \Fr_{q,s}^* \circ \Fr_{q,u}^*$ in
its semisimple and unipotent parts and define 
$$\Theta = \log_q(\Fr_q^*) = \log_q(\Fr_{q,s}^*) +
\log_q(\Fr_{q,u}^*).$$
Here we use some choice of principal branch of the logarithm on the
semisimple part and the series $\smash{ \frac{1}{\log q}\log(-) }$ on
the unipotent part. Since, by naturality, $\smash{ \Fr_q^*(v) = v }$,
the operator $\Theta$ may be seen as the most general 
solution to $\smash{ q^{\Theta} = \Fr_q^* }$, which is in accordance with
Deninger's philosophy that $\Theta$ should be the infinitesimal
generator of a Frobenius flow $\Fr_t^*$. We will recall the notions of
regularized determinant and anomalous dimension in the final
section of the paper.

The operator $s\cdot\id -\, \Theta$ has anomalous dimension
zero. Therefore, we may fix any choice of a positive real scaling
factor $\delta$ and state Theorem~\ref{main} as
$$\zeta(X,s) = \frac{\det_{\infty}(\delta(s\cdot\id - \,\Theta) \mid
  \TP_{\operatorname{od}}(X)
  \otimes_{W,\iota}\C)}{\det_{\infty}(\delta(s\cdot\id - \,\Theta) \mid
  \TP_{\operatorname{ev}}(X) \otimes_{W,\iota}\C)}.$$
In particular, we may take $\delta = \frac{1}{2\pi}$ as in the
archimedean case~\cite[Theorem~1.1]{connesconsani}, where the scaling
factor is essential.

We outline the proof of Theorem~\ref{main}. The present
proof \hskip-2pt\footnote{\,The recent proof of the K\"{u}nneth formula for
$\TP$ by Blumberg-Mandell~\cite[Theorem~A]{blumbergmandell1} has
facilitated a direct proof given by
Tabuada~\cite[Theorem~3.27]{tabuada}.} uses the cohomological
interpretation of the Hasse-Weil zeta function by crystalline
cohomology established by Berthelot in his
thesis~\cite[Th\'{e}or\`{e}me~VII.3.2.3]{berthelot},
$$\zeta(X,s) = \frac{ \det(\id - q^{-s}\Fr_q^* \mid
  H_{\operatorname{crys}}^{\operatorname{od}}(X/W) \otimes_{W,\iota}
  \C) }{ \det(\id - q^{-s}\Fr_q^* \mid
  H_{\operatorname{crys}}^{\operatorname{ev}}(X/W) \otimes_{W,\iota} 
  \C) }.$$
The comparison theorem of
Bloch-Deligne-Illusie~\cite[Th\'{e}or\`{e}me~II.1.4]{illusie} shows
that, in this formula, the crystalline cohomology 
$\smash{ H_{\operatorname{crys}}^i(X/W) }$ may be replaced by the
hypercohomology $\smash{
  H^i(X,W\Omega_X^{\boldsymbol{\cdot}}) }$ with coefficients in the
de~Rham-Witt complex. Therefore, we have the two spectral sequences of
hypercohomology, one of which gives rise to a natural spectral
sequence of finite dimensional $\C$-vector spaces
$$E_2^{i,j} = (\lim_{n,F}\, H^i(X,W_n\Omega_X^j)) \otimes_{W,\iota}\C
\Rightarrow H_{\operatorname{crys}}^{i+j}(X/W) \otimes_{W,\iota}\C$$
called the \emph{conjugate} spectral sequence. Here, the identification of the
$E_2$-term also uses the higher Cartier isomorphism of
Illusie-Raynaud~\cite[Proposition~III.1.4]{illusieraynaud}. We prove
in Theorem~\ref{hodge} below that, as a consequence of the definition
of $\THH(X)$ in~\cite{blumbergmandell,gh} and the calculation of the
equivariant homotopy groups of $\THH(\mathcal{O}_X)$
in~\cite[Theorem~B]{h}, the same finite dimensional $\C$-vector spaces
appear in the $E^2$-term of a natural spectral sequence
$$E_{i,j}^2 = \bigoplus_{m \in \Z} (\lim_{n,F}\,
  H^{-i}(X,W_n\Omega_X^{j+2m})) \otimes_{W,\iota} \C \Rightarrow
\TP_{i+j}(X) \otimes_{W,\iota}\C$$
converging to the periodic topological cyclic homology of $X$. We call
this spectral sequence the Hodge spectral sequence. The differentials
in the spectral sequence preserve the direct sum decomposition of the
$E^2$-term, and we call the number $w = j+m$ the
weight.\footnote{\,Bhatt-Morrow-Scholze~\cite{bhattmorrowscholze1}
  have now constructed a filtration of $\TP$ by weight, the
  filtration quotients of which are given by crystalline cohomology.}
By comparing the two spectral sequences and using that determinants
are multiplicative on exact sequences, we conclude from Berthelot's
cohomological interpretation that
$$\zeta(X,s) =\frac{ \det(\id - q^{-s}\Fr_q^* \mid
  \TP_1(X) \otimes_{W,\iota} \C) }{ \det(\id - q^{-s}\Fr_q^* \mid
 \TP_0(X) \otimes_{W,\iota} \C) }.$$
We wish to instead express the zeta function by using the full infinite
dimensional graded $\C$-algebra $\TP_*(X)
\otimes_{W,\iota}\C$. Indeed, we contend that this theory precisely is
the infinite dimensional theory produced by
Deninger~\cite{deninger} through an algebraic modification of
the finite dimensional crystalline cohomology theory employing the
Riemann-Hilbert correspondance on $\mathbb{G}_m$. Accordingly, as a
consequence of op.~cit., Corollary~2.8, we conclude that with $\Theta$
as in Theorem~\ref{main},
$$\zeta(X,s) = \frac{ \det_{\infty}(s \cdot \id - \, \Theta \mid
 \TP_{\operatorname{od}}(X) \otimes_{W,\iota} \C) }{ \det_{\infty}(s
 \cdot \id -\,\Theta \mid \TP_{\operatorname{ev}}(X)
 \otimes_{W,\iota} \C) }.$$
This completes the outline of the proof of Theorem~\ref{main}. We
remark that this result gives a cohomological interpretation of
the zeta function $\zeta(X,s)$ itself as opposed to one of the
function $Z(X,t)$ with $\zeta(X,s) = Z(X,q^{-s})$.

We close with a remark concerning periodicity. If $k$ is a
commutative ring, then the Hochschild homology $\HH_*(k)$ is an
anticommutative graded $k$-algebra and the graded ideal $I \subset
\HH_*(k)$ spanned by the homogeneous elements of positive even degree
has a natural divided power structure~\cite[Expos\'{e}~7]{cartan}. In
particular, it is not possible for the canonical $k$-algebra homomorphism from the symmetric
algebra
$$\xymatrix{
{ S_k(\HH_2(k)) } \ar[r] &
{ \HH_*(k) } \cr
}$$
to be an isomorphism, unless $k$ is a $\Q$-algebra. The
topological Hochschild homology $\THH_*(k)$ also is an
anticommutative graded $k$-algebra, but there is no longer a divided
power structure on graded ideal $I \subset \THH_*(k)$ spanned by
homogeneous elements of positive even degree. Hence, the canonical
$k$-algebra homomorphism
$$\xymatrix{
{ S_k(\THH_2(k)) } \ar[r] &
{ \THH_*(k) } \cr
}$$
can be an isomorphism, and B\"{o}kstedt's
periodicity theorem~\cite{bokstedt1} shows that this is indeed the
case for $k = \mathbb{F}_p$; see also~\cite[Theorem~5.2]{hm}. It is
this basic periodicity theorem in conjunction with the cyclotomic
structure of topological Hochschild homology that makes
Theorem~\ref{main} possible.

\section{The Tate spectrum}\label{tatesection}

In this section, we recall Greenlees' generalization of Tate
cohomology~\cite{greenlees}. We refer the reader
to~\cite[Section~4]{hm} for more details and will follow the
conventions therein. A more modern and comprehensive account, which in
particular treats the multiplicative properties of the construction in
detail, is given in~\cite{nikolausscholze}.

Let $G$ be a compact Lie group, let $E$ be a free left
$G$-CW-complex whose underlying space is contractible, and let $f
\colon E_+ \to S^{\,0}$ be the map that collapses $E$ onto $0 \in
\{0,\infty\} = S^{\,0}$. We consider the following cofibration
sequence
$$\xymatrix{
{ E_+ } \ar[r]^-{f} &
{ S^{\,0} } \ar[r]^-{i} &
{ \tilde{E} } \ar[r]^-{\partial} &
{ \Sigma E_+ } \cr
}$$
in the homotopy category of pointed left $G$-spaces. The sequence is
unique, up to unique isomorphism, and the induced sequence of
suspension $G$-spectra is a cofibration sequence in the $G$-stable
homotopy category. We abuse notation and denote the latter sequence by
the same symbols. Now, for $X$ a $G$-spectrum, we consider the diagram
$$\xymatrix{
{ (E_+ \otimes X)^G } \ar[r] \ar[d] &
{ X^G } \ar[r] \ar[d] &
{ (\tilde{E} \otimes X)^G } \ar[r] \ar[d] &
{ \Sigma (E_+ \otimes X)^G } \ar[d] \cr
{ (E_+ \otimes [E_+,X])^G } \ar[r] &
{ [E_+,X]^G } \ar[r] &
{ (\tilde{E} \otimes [E_+,X])^G } \ar[r] &
{ \Sigma (E_+ \otimes [E_+,X])^G } \cr
}$$
where we write ``$\otimes$'' and ``$[-,-]$'' to indicate the symmetric
monoidal product and internal hom object in the homotopy category of
$G$-spectra, respectively. The left-hand vertical morphism is an
isomorphism. We write the lower sequence as
$$\xymatrix{
{ H_{\boldsymbol{\cdot}}(G,S^{\mathfrak{g}} \otimes X) } \ar[r]^-{N} &
{ H^{\boldsymbol{\cdot}}(G,X) } \ar[r]^-{i} &
{ \hat{H}^{\boldsymbol{\cdot}}(G,X) } \ar[r]^-{\partial} &
{ \Sigma H_{\boldsymbol{\cdot}}(G,S^{\mathfrak{g}} \otimes X) } \cr
}$$
and call it the Tate cofibration sequence. Here $S^{\mathfrak{g}}$ is
the one-point compactification of the adjoint representation of $G$ on
its Lie algebra $\mathfrak{g}$.

We are interested mainly in the case, where $G$ is either the full
circle group $\mathbb{T}$ of complex numbers of modulus $1$ or one of
its finite subgroups $C_r \subset \mathbb{T}$. We record the following
result, referring the reader to~\cite{bousfield} for background on
$p$-completion.

\begin{lemma}\label{tatetower}Let $p$ be a prime number and let $X$ be
a $\mathbb{T}$-spectrum. If the underlying spectrum of $X$ is
$p$-complete and bounded below, then the canonical morphism
$$\xymatrix{
{ \hat{H}^{\boldsymbol{\cdot}}(\mathbb{T},X) } \ar[r] &
{ \holim_n \hat{H}^{\boldsymbol{\cdot}}(C_{p^n},X) } \cr
}$$
is a weak equivalence.
\end{lemma}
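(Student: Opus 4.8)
The plan is to reduce the statement to two inputs: the analogous comparison for homotopy fixed points, which is a question in $p$-adic homotopy theory of classifying spaces, and a connectivity estimate that lets one interchange a homotopy limit with a filtered homotopy colimit. For the first step I would record the \emph{telescope presentation} of the Tate spectrum. Fix the model $E\T = S(\C^{\infty})$ with its standard $\T$-CW-structure; its underlying $C_{p^n}$-CW-complex is a model for $EC_{p^n}$, and, as $\C$ restricts to a faithful representation $\lambda_n$ of $C_{p^n}$, the pointed $\T$-space $\widetilde{E\T} = S^{\infty\C}$ restricts to the model $\widetilde{EC_{p^n}} = S^{\infty\lambda_n}$. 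Since categorical fixed points commute with filtered homotopy colimits and the representation spheres $S^{k\C}$ are dualizable, the definition $\hat{H}^{\boldsymbol{\cdot}}(G,X) = (\widetilde{E}\otimes[E_+,X])^{G}$ yields weak equivalences
$$\hat{H}^{\boldsymbol{\cdot}}(\T,X)\;\simeq\;\operatornamewithlimits{hocolim}_{k}\,H^{\boldsymbol{\cdot}}(\T,S^{k\C}\otimes X),\qquad \hat{H}^{\boldsymbol{\cdot}}(C_{p^n},X)\;\simeq\;\operatornamewithlimits{hocolim}_{k}\,H^{\boldsymbol{\cdot}}(C_{p^n},S^{k\lambda_n}\otimes X),$$
with transition maps given by smashing with the inclusions $S^{k\C}\hookrightarrow S^{(k+1)\C}$. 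As the Euler classes of $\C$ and of $\lambda_n$ correspond under restriction along $C_{p^n}\subset C_{p^{n+1}}\subset\T$, the canonical morphism of the lemma is the map induced between these two presentations.

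The \emph{key reduction} is the claim that for every $\T$-spectrum $Y$ whose underlying spectrum is $p$-complete and bounded below, the canonical map $H^{\boldsymbol{\cdot}}(\T,Y)\to\holim_{n}H^{\boldsymbol{\cdot}}(C_{p^n},Y)$ is a weak equivalence; this applies to each $Y = S^{k\C}\otimes X$, as smashing with a finite complex preserves $p$-completeness and boundedness below of the underlying spectrum. To prove it I would note that $\holim_{n}H^{\boldsymbol{\cdot}}(C_{p^n},Y)$ is the homotopy limit over $BC_{p^{\infty}}=\operatornamewithlimits{hocolim}_{n}BC_{p^n}$ of the pullback along $f\colon BC_{p^{\infty}}\to B\T=\mathbb{P}^{\,\infty}(\C)$ of the local system of spectra determined by $Y$, where $f$ is induced by $C_{p^{\infty}}\subset\T$. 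One computes $H^{*}(BC_{p^{\infty}};\Fp)\cong\Fp[b]$ with $b$ of degree $2$, and that $f^{*}$ sends the polynomial generator $t = c_1(\mathcal{O}(1))$ to $b$ — the degree-one classes in $H^{*}(BC_{p^n};\Fp)$ not surviving the limit, since the restriction maps are multiplication by $p$, hence zero, on them. Thus $f$ is an $\Fp$-cohomology isomorphism and therefore a $p$-adic equivalence of nilpotent spaces. Since $Y$ is bounded below and $p$-complete it is $H\Fp$-local, and the $\T$-action on $Y$ factors through $(B\T)^{\wedge}_{p}$ because $\Aut(Y)$ has $p$-complete homotopy groups; hence the homotopy limits over $B\T$ and over $BC_{p^{\infty}}$ of the corresponding sheaves of $p$-complete spectra agree, which is the claim.

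It remains to interchange the homotopy limit over $n$ with the filtered homotopy colimit over $k$, and this is where both hypotheses enter. Assume $X$ bounded below by $N$ and write $M_{n,k}=H^{\boldsymbol{\cdot}}(C_{p^n},S^{k\lambda_n}\otimes X)$. The cofiber of $S^{k\lambda_n}\hookrightarrow S^{(k+1)\lambda_n}$ is built from two free $C_{p^n}$-cells, in dimensions $2k+1$ and $2k+2$, so, applying $(-\otimes[EC_{p^n,+},X])^{C_{p^n}}$, the cofiber of $M_{n,k}\to M_{n,k+1}$ is $(N+2k+1)$-connective, uniformly in $n$. Hence $M_{n,k}\to\operatornamewithlimits{hocolim}_{j}M_{n,j}=\hat{H}^{\boldsymbol{\cdot}}(C_{p^n},X)$ is $(N+2k)$-connected uniformly in $n$, so by the Milnor sequence $\holim_{n}M_{n,k}\to\holim_{n}\hat{H}^{\boldsymbol{\cdot}}(C_{p^n},X)$ has connectivity tending to infinity with $k$; these maps are compatible as $k$ varies, so $\operatornamewithlimits{hocolim}_{k}\holim_{n}M_{n,k}\to\holim_{n}\hat{H}^{\boldsymbol{\cdot}}(C_{p^n},X)$ is a weak equivalence. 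On the other hand the key reduction identifies $\holim_{n}M_{n,k}$ with $H^{\boldsymbol{\cdot}}(\T,S^{k\C}\otimes X)$ compatibly in $k$, whence $\operatornamewithlimits{hocolim}_{k}\holim_{n}M_{n,k}\simeq\operatornamewithlimits{hocolim}_{k}H^{\boldsymbol{\cdot}}(\T,S^{k\C}\otimes X)\simeq\hat{H}^{\boldsymbol{\cdot}}(\T,X)$, and the composite is the morphism of the lemma. I expect the interchange to be the main obstacle: for a general $\T$-spectrum the morphism of the lemma is far from an equivalence, and it is precisely the combination of $p$-completeness — which makes $f$ an equivalence after taking sections of sheaves of $p$-complete spectra — with boundedness below — which makes the connectivity of the cells, growing linearly in $k$, control the error terms — that forces the conclusion.
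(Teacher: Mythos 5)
Your argument is correct in outline, but it takes a genuinely different route from the paper's. The paper's proof is short and modular: it quotes \cite[Lemma~2.1.1]{h8} for the statement that the morphism becomes a weak equivalence after $p$-completion, and then reduces the lemma to checking that both sides are already $p$-complete --- the target because each $\hat{H}^{\boldsymbol{\cdot}}(C_{p^n},X)$ is a homotopy limit of $p$-complete spectra, and the source by using boundedness below to replace $\tilde{E}$ by a finite skeleton in any given range of degrees, which reduces the question to the $p$-completeness of a homotopy fixed point spectrum. You instead give a self-contained proof that in effect inlines the content of the cited lemma: the telescope presentation $\hat{H}^{\boldsymbol{\cdot}}(G,X)\simeq\operatorname{hocolim}_k H^{\boldsymbol{\cdot}}(G,S^{k\mathbb{C}}\otimes X)$, the comparison of homotopy fixed points over $B\mathbb{T}$ and $BC_{p^{\infty}}$, and a uniform connectivity estimate to commute the telescope with $\holim_n$. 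What your approach buys is independence from the reference and a clear separation of where each hypothesis enters; what the paper's buys is brevity and the avoidance of the one delicate point in your argument. That point is the justification of the ``key reduction'': the assertion that the action ``factors through $(B\mathbb{T})^{\wedge}_p$ because $\Aut(Y)$ has $p$-complete homotopy groups'' is not the right mechanism and would not survive scrutiny as stated (a $p$-adic equivalence of base spaces does not by itself identify homotopy limits of local systems). The correct justification, which your setup already supports, is either that the homotopy fiber of $BC_{p^{\infty}}\to B\mathbb{T}$ is $K(\Z[1/p],1)$ and hence $\Fp$-acyclic, so that $\mathcal{Y}\to f_*f^*\mathcal{Y}$ is a fiberwise equivalence for $H\Fp$-local fibers, or the comparison of the conditionally convergent homotopy fixed point spectral sequences using that $H^*(B\mathbb{T};M)\to H^*(BC_{p^{\infty}};M)$ is an isomorphism for $M$ Ext-$p$-complete (here $\Hom_{\Z}(\Qp/\Zp,M)=0$ and $\operatorname{Ext}_{\Z}^1(\Qp/\Zp,M)\cong M$). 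With that repair, and granting the routine verification that your composite equivalence agrees with the canonical morphism of the lemma, the proof is complete; your closing remark that the statement fails for general $\mathbb{T}$-spectra (e.g.\ rationally, where the right-hand side vanishes) is a correct and useful sanity check.
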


\begin{proof}It is proved in~\cite[Lemma~2.1.1]{h8} that the morphism
in question becomes a weak equivalence after
$p$-completion. Hence, it suffices to show that the domain and
target of this morphism are $p$-complete. We prove the statement
for the domain; the proof for the target is analogous. We wish to show
that for every integer $i$, the canonical morphism
$$\xymatrix{
{ \pi_i(\hat{H}^{\boldsymbol{\cdot}}(\mathbb{T},X)) } \ar[r] &
{ \pi_i(\hat{H}^{\boldsymbol{\cdot}}(\mathbb{T},X),\Z_p) } \cr
}$$
is an isomorphism. The assumption that the underlying spectrum of $X$
is bounded below implies that there exists a non-negative integer $s$
depending on $i$ such that the morphism
$$\xymatrix{
{ (\Fil_s(\tilde{E}) \otimes [E_+,X])^{\mathbb{T}} }
\ar[r] &
{ (\tilde{E} \otimes [E_+,X])^{\mathbb{T}} } \cr
}$$
induced by the inclusion of the $s$-skeleton induces an isomorphism of
homotopy groups in degrees less than or equal to $i$. We may further
assume that the $\mathbb{T}$-CW-complex $\Fil_s(\tilde{E})$ is
finite.
Therefore, replacing $X$ by $\Fil_s(\tilde{E}) \otimes X$, the
underlying spectrum of which again is $p$-complete, it suffices to
show that the group cohomology spectrum
$H^{\boldsymbol{\cdot}}(\mathbb{T},X)$ is $p$-complete. In this case,
the completion morphism factors as the composition
$$\xymatrix{
{ H^{\boldsymbol{\cdot}}(\mathbb{T},X) } \ar[r] &
{ H^{\boldsymbol{\cdot}}(\mathbb{T},X_p) } \ar[r] &
{ H^{\boldsymbol{\cdot}}(\mathbb{T},X)_p } \cr
}$$
of the morphism induced by the completion morphism for $X$ and a
canonical isomorphism. But the left-hand morphism is a weak
equivalence by the assumption that the underlying non-equivariant
spectrum of $X$ is $p$-complete.
\end{proof}

The skeleton filtrations of the pointed $G$-CW-complexes $E_+$ and
$\tilde{E}$ give rise to spectral sequences converging conditionally
to the homotopy groups of the three terms in the Tate cofibration
sequence. We call the spectral sequence converging to the homotopy
groups of the Tate spectrum the Tate spectral sequence. In the
situation of Lemma~\ref{tatetower}, we take $E = S(\C^{\infty})$ and
$\tilde{E} = S^{\C^{\infty}}$ with the $G$-CW-structures defined
in~\cite[Section~4.4]{hm4}. For the circle group $G = \mathbb{T}$,
the spectral sequence takes the form 
$$E^2 = S\{t^{\pm1}\} \otimes \pi_*(X) \Rightarrow
\hat{H}^{-*}(\mathbb{T},X)$$
with the generator $\smash{ t = c_1(\mathcal{O}(1)) \in E_{-2,0}^2 }$
specified below. For $G = C_{p^n}$, we have
$$E^2 = S\{t^{\pm1}\} \otimes \pi_*(X)/p^n\pi_*(X) \Rightarrow
\hat{H}^{-*}(C_{p^n},X),$$
if the homotopy groups $\pi_*(X)$ are $p$-torsion free, and
$$E^2 = \Lambda\{u\} \otimes S\{t^{\pm1}\} \otimes \pi_*(X)
\Rightarrow \hat{H}^{-*}(C_{p^n},X),$$
if the homotopy groups $\pi_*(X)$ are annihilated by $p$. We define
the generators $\smash{ t \in E_{-2,0}^2 }$ and
$\smash{ u \in E_{-1,0}^2 }$ to be the classes of the cycles specified
in~\cite[Lemma~4.2.1]{hm4}. We now define the generator $t$ for $G =
\mathbb{T}$ to be the unique class that restricts to the generator $t$
for $G = C_{p^n}$. If $X$ is a ring $\mathbb{T}$-spectrum, then the
spectral sequences are multiplicative with the indicated bi-graded
ring structure on the $E^2$-term, except that if $p = 2$ and $n = 1$,
then $u^2 = t$. The restriction maps in Tate cohomology
$$\xymatrix{
{ \hat{H}^{\boldsymbol{\cdot}}(\mathbb{T},X) } \ar[r] &
{ \hat{H}^{\boldsymbol{\cdot}}(C_{p^n},X) } \ar[r] &
{ \hat{H}^{\boldsymbol{\cdot}}(C_{p^{n-1}},X) } \cr
}$$
induce multiplicative maps of Tate spectral sequences that, on the
$E^2$-terms, are given by the tensor product of the canonical
projections
$$\xymatrix{
{ \pi_*(X) } \ar[r] &
{ \pi_*(X)/p^n\pi_*(X) } \ar[r] &
{ \pi_*(X)/p^{n-1}\pi_*(X) } \cr
}$$
and the graded ring homomorphisms that take $t$ to $t$ and $u$ to $0$.

\section{Cyclotomic spectra}\label{cyclotomicsection}

We briefly recall the notion of a $p$-cyclotomic
spectrum, which was introduced in~\cite[Section~2]{hm} and codifies
the structure available on the topological Hochschild spectrum in
addition to the circle action discovered by Connes. A better and more
flexible version of this notion was introduced by Nikolaus and
Scholze~\cite{nikolausscholze}. 

The group $\mathbb{T}$ has the special property that there
exists a group isomorphism
$$\xymatrix{
{ \mathbb{T} } \ar[r]^-{\rho} &
{ \mathbb{T}/C_p, } \cr
}$$
which we will always choose to be the $p$th root. A
$p$-cyclotomic structure on a $\mathbb{T}$-spectrum $T$ was defined
in~\cite[Definition~2.2]{hm} to be a morphism of $\mathbb{T}$-spectra
$$\xymatrix{
{ \rho^*(\tilde{E} \otimes T)^{C_p} } \ar[r]^-{r} &
{ T } \cr
}$$
with the property that for all $n \geqslant 0$, the induced map of
$C_{p^n}$-fixed point spectra is a weak equivalence. The diagram at
the beginning of Section~\ref{tatesection} takes the form
$$\xymatrix{
{ H_{\boldsymbol{\cdot}}(C_{p^n},T) } \ar[r] \ar@{=}[d] &
{ \TR^{n+1}(T;p) } \ar[r]^-{R} \ar[d]^-(.43){\tilde{\gamma}} &
{ \TR^n(T;p) } \ar[r]^-{\partial} \ar[d]^-{\gamma} &
{ \Sigma H_{\boldsymbol{\cdot}}(C_{p^n},T) } \ar@{=}[d] \cr
{ H_{\boldsymbol{\cdot}}(C_{p^n},T) } \ar[r]^-{N} &
{ H^{\boldsymbol{\cdot}}(C_{p^n},T) } \ar[r]^-{i} &
{ \hat{H}^{\boldsymbol{\cdot}}(C_{p^n},T) } \ar[r]^-{\partial} &
{ H_{\boldsymbol{\cdot}}(C_{p^n},T) } \cr
}$$
where $\TR^n(T;p)$ is the $C_{p^{n-1}}$-fixed point spectrum of $T$,
and where the morphism $\varphi$ is the map of $C_{p^{n-1}}$-fixed
point spectra induced by the composition
$$\xymatrix{
{ T } &
{ \rho^*(\tilde{E} \otimes T)^{C_p} } \ar[r]^-{f^*} \ar[l]_-(.46){r} &
{ \rho^*(\tilde{E} \otimes [E_+,T])^{C_p} } \cr
}$$
of an inverse of the morphism $r$ and the morphism $f^*$ in the
original diagram. We remark that, recently, Nikolaus and Scholze have
made the remarkable observation that it is only the composite
$\rho$-equivariant morphism
$$\xymatrix{
{ T } \ar[r]^-{\gamma} &
{ \hat{H}^{\boldsymbol{\cdot}}(C_p,T) } \cr
}$$
that is of importance and that the factorization of this map inherent
in the original definition of a $p$-cyclotomic structure is
immaterial. As demonstrated in~\cite{nikolausscholze}, this
approach makes it possible to develop the theory of cyclotomic
spectra without the use of equivariant stable homotopy
theory. In the following, we will therefore refer to $\varphi$ as the
cyclotomic structure morphism. We call the morphism
$$\xymatrix{
{ \TR^{n+1}(T;p) } \ar[r]^-{R} &
{ \TR^n(T;p) } \cr
}$$
in the diagram above the restriction morphism; and we call the
morphism
$$\xymatrix{
{ \TR^{n+1}(T;p) } \ar[r]^-{F} &
{ \TR^n(T;p) } \cr
}$$
given by the canonical inclusion the Frobenius morphism. We write
$$\TF(T;p) = \holim_{n,F} \TR^n(T;p)$$
for the homotopy limit with respect to the Frobenius morphisms. Using
the $p$-cyclotomic structure on $T$, we now define the inverse
Frobenius operator
$$\xymatrix{
{ \TF(T;p) } \ar[r]^-{\varphi^{-1}} &
{ \TF(T;p) } \cr
}$$
to be the composition
$$\xymatrix{
{ \holim_{n,F} \TR^n(T;p) } \ar[r] &
{ \holim_{n,F} \TR^{n+1}(T;p) } \ar[r] &
{ \holim_{n,F} \TR^n(T;p) } \cr
}$$
of the restriction along the successor functor and the morphism
induced by the restriction morphisms. The cyclotomic structure
morphism induces a morphism
$$\xymatrix{
{ \TF(T;p) = \holim_{n,F} \TR^n(T;p) } \ar[r]^-{\gamma} &
{ \holim_{n,F} \hat{H}^{\boldsymbol{\cdot}}(C_{p^n},T), } \cr 
}$$
and Lemma~\ref{tatetower} identifies the target with
$\hat{H}^{\boldsymbol{\cdot}}(\mathbb{T},T)$, provided that $T$ is
$p$-complete and bounded below. The inverse Frobenius operator
$\varphi^{-1}$ does typically not extend to an operator defined on
$\hat{H}^{\boldsymbol{\cdot}}(\mathbb{T},T)$, even at the level of
homotopy groups.


%
%
%
%

\section{Topological Hochschild homology}\label{thhsection}

In this section, we recall the structure of topological
Hochschild homology for schemes smooth over a perfect field of
characteristic $p > 0$ from~\cite{h}. We also recall B\"{o}kstedt's
periodicity theorem from~\cite{bokstedt1}.

Topological Hochschild homology is defined in the same generality as
algebraic $K$-theory and assigns to an exact category with weak
equivalences a cyclotomic spectrum. In particular, for a 
scheme $X$, applying this theory to the exact category of perfect
complexes of quasi-coherent $\mathcal{O}_X$-modules with
quasi-isomorphisms as weak equivalences, defines a cyclotomic spectrum
$\THH(X)$. The derived tensor product of perfect complexes
gives rise to a pairing on $\THH(X)$, which makes it a commutative
cyclotomic ring
spectrum.\footnote{\,It follows
  from~\cite[Section~IV.2]{nikolausscholze} that $\THH(X)$ is an
  $\mathbb{E}_{\infty}$-algebra in cyclotomic spectra.}
Moreover, for every morphism of schemes $f \colon X \to Y$, left
derived extension of scalars gives rise to a morphism of cyclotomic ring
spectra $f^* \colon \THH(Y) \to \THH(X)$; and for every proper
morphism of schemes $f \colon X \to Y$, right derived restriction of
scalars gives rise to a morphism of cyclotomic $\THH(Y)$-module
spectra $f_* \colon \THH(X) \to \THH(Y)$.  

The homotopy groups $\THH_*(X)$ form an anticommutative graded
ring and we define $\THH_*(\mathcal{O}_X)$ to be the sheaf of
anticommutative graded rings on the small \'{e}tale site of $X$
associated to the presheaf that to an \'{e}tale morphism $f \colon U
\to X$ assigns $\THH_*(U)$. There is a natural ring isomorphism
$$\xymatrix{
{ \mathcal{O}_X } \ar[r]^-{\eta} &
{ \THH_0(\mathcal{O}_X) } \cr
}$$
given by the inclusion of the zero-skeleton, and we will consider
$\THH_*(\mathcal{O}_X)$ as a sheaf of anticommutative 
graded $\mathcal{O}_X$-algebras via this map. As such, it is
quasi-coherent by~\cite[Proposition~3.2.1]{gh}, and there is a
multiplicative and conditionally convergent descent spectral sequence
$$E_{i,j}^2 = H^{-i}(X,\THH_j(\mathcal{O}_X)) \Rightarrow
\THH_{i+j}(X).$$
We consider this spectral sequence, which we call the Hodge
spectral sequence for topological Hochschild homology, in detail in
Section~\ref{hodgesection} below. Since $\THH_j(\mathcal{O}_X)$ is
zero for $j < 0$, it shows, in particular, that $\THH_i(X)$ is zero for $i < -\dim(X)$.  

More generally, we let $r = p^{n-1}$ be a prime power and let
$C_r \subset \mathbb{T}$ be the subgroup of order $r$. The equivariant
homotopy groups
$$\TR_*^n(X;p) = \pi_*(\THH(X)^{C_r})$$
again form an anticommutative graded ring and we define
$\TR_*^n(\mathcal{O}_X;p)$ to be the sheaf of anticommutative graded
rings on the small \'{e}tale site of $X$ associated to the presheaf
that to an \'{e}tale morphism $f \colon U \to X$ assigns
$\TR_*^n(U;p)$. By~\cite[Theorem~F]{hm}, there is a canonical natural
ring isomorphism
$$\xymatrix{
{ W_n(\mathcal{O}_X) } \ar[r]^-{\eta} &
{ \TR_0^n(\mathcal{O}_X;p), } \cr
}$$
which is compatible with the restriction, Frobenius, and
Verschiebung operators, and we view $\TR_*^n(\mathcal{O}_X;p)$ as a 
sheaf of anticommutative graded $W_n(\mathcal{O}_X)$-algebras via this
map. As such, it is quasi-coherent. Indeed, this follows from the
proof of~\cite[Proposition~6.2.4]{hm3} by
using~\cite[Theorem~B]{borger} and~\cite[Corollary~15.4]{borger1},
which were proved later.

The $W_n(\mathcal{O}_X)$-modules $\TR_j^n(\mathcal{O}_X;p)$ together
with the cup product $\cup$, the restriction $R$, the Frobenius $F$, the
Verschiebung $V$, and Connes' operator $d$ constitute a $p$-typical
Witt complex over $\mathcal{O}_X$;
see~\cite[Definition~4.1]{h6}. Accordingly, this $p$-typical Witt
complex receives a unique map of $p$-typical Witt complexes
$$\xymatrix{
{ W_n\Omega_X^j } \ar[r]^-{\eta} &
{ \TR_j^n(\mathcal{O}_X;p) } \cr
}$$
from the $p$-typical de~Rham-Witt complex of $X$, which, by
definition, is the initial $p$-typical Witt complex over
$\mathcal{O}_X$. This map is an isomorphism for $j \leqslant 1$
by~\cite{h9}. 

We next recall the following fundamental periodicity theorem. The
basic case $n = 1$ is B\"{o}kstedt's theorem proved in his
paper~\cite{bokstedt1} which unfortunately remains unpublished. A
detailed outline of B\"{o}kstedt's proof is given
in~\cite[Section~5.2]{hm}, and the proof of the general case $n
\geqslant 1$ is given in~\cite[Theorem~5.5]{hm}.  

\begin{theorem}\label{periodicitytheorem}Let $k$ be a perfect field of
characteristic $p > 0$ and let $n$ be a positive integer. The
canonical map of graded $W_n(k)$-algebras
$$\xymatrix{
{ S_{W_n(k)}(\TR_2^n(k;p)) } \ar[r] &
{ \TR_*^n(k;p) } \cr
}$$
is an isomorphism and the $W_n(k)$-module $\TR_2^n(k;p)$ is free of
rank $1$.
\end{theorem}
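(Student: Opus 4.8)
The plan is to reduce the general statement to Bökstedt's original computation for $n=1$ together with a careful tracking of the Frobenius, restriction, and Verschiebung operators in the $p$-typical Witt complex structure, and then to argue by induction on $n$ using the cyclotomic structure. First I would recall from Bökstedt's theorem that $\THH_*(k) = \TR_*^1(k;p)$ is the free graded-commutative $k$-algebra on a class $\sigma$ in degree $2$, so that $\THH_{2m}(k)$ is free of rank $1$ over $k$ and $\THH_{2m+1}(k) = 0$; the odd vanishing and the rank statement are the two facts I will propagate up the tower. For the inductive step, the key input is the Tate cofibration sequence from Section~\ref{cyclotomicsection} in the form
$$
\xymatrix{
{ H_{\boldsymbol{\cdot}}(C_{p^n},\THH(k)) } \ar[r] &
{ \TR^{n+1}(k;p) } \ar[r]^-{R} &
{ \TR^n(k;p) } \ar[r]^-{\partial} &
{ \Sigma H_{\boldsymbol{\cdot}}(C_{p^n},\THH(k)) ,} \cr
}
$$
whose left-hand term I would analyze via the homotopy orbit spectral sequence. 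Since $\THH_*(k)$ is concentrated in even degrees and is $p$-torsion free, the $C_{p^n}$-homotopy orbit spectral sequence has $E^2$-term $H_*(BC_{p^n};\THH_*(k))$; the point is that the group homology of $C_{p^n}$ with these coefficients, combined with the known structure of $\TR^n$ by the inductive hypothesis, forces the spectral sequence to collapse and forces the long exact sequence of homotopy groups to break into short exact sequences
$$
0 \to \TR_{2m}^n(k;p)\{V\} \to \TR_{2m}^{n+1}(k;p) \xrightarrow{R} \TR_{2m}^n(k;p) \to 0,
$$
with odd groups vanishing at each stage, whence $\TR_{2m}^n(k;p)$ is free over $W_n(k)$ with rank independent of $m$.

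Granting the rank one statement, the multiplicative claim follows from the observation that the canonical algebra map $S_{W_n(k)}(\TR_2^n(k;p)) \to \TR_*^n(k;p)$ is a map of graded $W_n(k)$-algebras between modules that are, in each even degree, free of rank one, and that it is compatible with the restriction maps $R \colon \TR^{n+1}_*(k;p) \to \TR^n_*(k;p)$. So I would check surjectivity and injectivity degree by degree and induct: for $n=1$ it is Bökstedt, and for the inductive step one lifts a generator $\sigma_{n+1}$ of $\TR_2^{n+1}(k;p)$ along $R$ from a generator of $\TR_2^n(k;p)$ and checks that its $m$-th power generates $\TR_{2m}^{n+1}(k;p)$, using that $R$ is surjective with kernel a principal Verschiebung-generated ideal and that the de~Rham–Witt map $\eta\colon W_n\Omega_k^{\boldsymbol{\cdot}}\to \TR_*^n(k;p)$ identifies $W_n(k)$ in degree zero. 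The absence of divided powers in $\THH_*(k)$, emphasized in the Introduction, is exactly what makes $\sigma^m$ a generator rather than $m!\,\sigma^{[m]}$; concretely this is the statement that the relevant homotopy operations (Dyer–Lashof-type operations governing the $\mathbb{E}_\infty$-structure on $\THH(k)$) do not introduce denominators, which is where Bökstedt's explicit $\mathbb{F}_p$-homology calculation enters.

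The main obstacle will be the inductive analysis of the homotopy orbit term and the verification that the long exact sequence degenerates into the short exact sequences above — in other words, showing that the boundary map $\partial$ vanishes and that there is no unexpected extension obstruction in the relevant degrees. This requires knowing the image of the norm map $N$ and the behavior of the class $t$ (equivalently the Verschiebung) in the Tate and homotopy-orbit spectral sequences, and it is precisely here that one uses that $\THH_*(k)$ is torsion free and even, so that the $E^2$-terms described in Section~\ref{tatesection} are as clean as possible; I would expect to invoke the comparison of the restriction and Frobenius operators on $\TR^n$ with the corresponding operators on the de~Rham–Witt complex through $\eta$ in order to pin down the module generators compatibly across the tower. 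Once the rank-one and even-concentration statements are established for all $n$, the algebra isomorphism is a formal consequence, completing the proof.
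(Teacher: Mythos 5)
The paper does not actually prove this theorem: it is recalled from Hesselholt--Madsen \cite[Theorem~5.5]{hm}, the case $n=1$ being B\"okstedt's unpublished computation, so the comparison below is with that proof. Your skeleton --- induction on $n$ via the fundamental cofibration sequence $H_{\boldsymbol{\cdot}}(C_{p^n},\THH(k)) \to \TR^{n+1}(k;p) \xrightarrow{R} \TR^n(k;p)$ --- is the right one, but both degeneration claims on which your induction rests are false. First, $R$ is \emph{not} surjective in positive even degrees: as recorded in the paper immediately after the theorem, $R(\alpha_{n+1}) = p\cdot\alpha_n$, so on $\TR_{2m}$ the map $R$ is, up to a unit, multiplication by $p^m$ composed with the surjection $W_{n+1}(k)\to W_n(k)$; already for $k=\Fp$ and $n=m=1$ it is the zero map $\Z/p^2\to\Z/p$. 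Hence your short exact sequences $0\to\TR^n_{2m}\{V\}\to\TR^{n+1}_{2m}\to\TR^n_{2m}\to 0$ do not exist; the cokernel of $R$ in degree $2m$ is $W_n(k)/p^mW_n(k)$ and maps injectively into $H_{2m-1}(C_{p^n},\THH(k))$ under $\partial$. Second, the homotopy orbit spectral sequence does not collapse: for $n=1$ its $E^2$-term in total degree $2m-1$ consists of $m$ copies of $k$, while the abutment $H_{2m-1}(C_p,\THH(k))\cong\operatorname{coker}(R)$ is a single copy, so there are nontrivial differentials for $m\geqslant 2$. (Also, $\THH_*(k)$ is annihilated by $p$, not $p$-torsion free, so the relevant $E^2$-term from Section~\ref{tatesection} is the one with the exterior class $u$.)

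The inputs you are missing are precisely the ones the paper isolates elsewhere: (a) the Tate spectral sequence differential $d^{2n+1}(u_n)=\lambda\, t^{n+1}\alpha^n$, quoted in Lemma~\ref{finitefiltration} from \cite[Lemma~5.4]{hm}, which determines $\hat{H}^{-*}(C_{p^n},\THH(k))$ and, by comparison of the skeleton filtrations, the differentials in the homotopy orbit and homotopy fixed point spectral sequences; and (b) the statement that the cyclotomic structure map $\gamma\colon\TR_i^n(k;p)\to\hat{H}^{-i}(C_{p^n},\THH(k))$ is an isomorphism for $i\geqslant 0$, which is the $d=0$ case of Proposition~\ref{continuation}, i.e.\ \cite[Proposition~5.3]{hm} bootstrapped by Tsalidis's theorem. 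With (a) and (b) one reads off $\TR_*^n(k;p)\cong W_n(k)[\sigma_n]$ directly from the Tate spectral sequence rather than from a splitting of the long exact sequence. Your remarks on the absence of divided powers and on pinning down generators via the de~Rham--Witt map are consistent with this, but they do not substitute for (a) and (b), which are where the actual work lies.
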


In Section~\ref{dividedbottsection}, we define a preferred
$W_n(k)$-module generator $\alpha_n \in \TR_2^n(k;p)$, 
the divided Bott element. It satisfies $F(\alpha_n) = \alpha_{n-1}$ and
$R(\alpha_n) = p \cdot \alpha_{n-1}$, showing that,
in~\cite[Theorem~5.5]{hm}, we can arrange for the units $\lambda_n$ to
be equal to $1$.

We also recall from~\cite[Theorem~B]{h} the following result, which we
will use in Section~\ref{hodgesection} below to construct the Hodge
spectral sequence mentioned in the Introduction.

\begin{theorem}\label{smoothalgebras}Let $k$ be a perfect field of
characteristic $p > 0$, let $n$ be a positive integer, and let $f
\colon X \to \Spec(k)$ be a smooth morphism. In this situation, the
canonical map of graded $W_n(\mathcal{O}_X)$-algebras
$$\xymatrix{
{ W_n\Omega_X^* \otimes_{f^*W_n(k)}f^*\TR_*^n(k;p) } \ar[r] &
{ \TR_*^n(\mathcal{O}_X;p) } \cr
}$$
is an isomorphism.
\end{theorem}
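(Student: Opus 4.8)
The plan is to reduce the global statement to a statement about stalks on the small étale site of $X$, and then to an étale-local statement about smooth $k$-algebras, where Theorem~\ref{periodicitytheorem} can be brought to bear together with the known structure of the de~Rham--Witt complex. Since both sides of the asserted isomorphism are sheaves of graded $W_n(\mathcal{O}_X)$-algebras on the small étale site, and since the comparison map is a map of such, it suffices to check that it induces an isomorphism on stalks. By standard limit arguments, the stalk at a geometric point is computed from the values on an étale neighborhood basis, so the question becomes: for $A$ a smooth $k$-algebra, is the canonical map
$$\xymatrix{
{ W_n\Omega_A^* \otimes_{W_n(k)} \TR_*^n(k;p) } \ar[r] &
{ \TR_*^n(A;p) } \cr
}$$
an isomorphism? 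Here I use that $W_n\Omega_{(-)}^*$ and $\TR_*^n(-;p)$ both commute with étale base change and with filtered colimits of rings, the former by the corresponding properties of the de~Rham--Witt complex (\'etale base change for $W_n\Omega^*$ is \cite[Proposition~I.1.14]{illusie} type input, already implicit in the discussion preceding the theorem) and the latter because $\THH$ commutes with filtered colimits and étale base change in the strong sense used in \cite{h}.

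Next I would reduce further in the $*$-grading. By Theorem~\ref{periodicitytheorem}, $\TR_*^n(k;p) = S_{W_n(k)}(\TR_2^n(k;p))$ with $\TR_2^n(k;p)$ free of rank one over $W_n(k)$ on the divided Bott element $\alpha_n$; thus the left-hand side is the polynomial algebra $W_n\Omega_A^*[\alpha_n]$, and it is enough to prove that in each fixed total degree the map is an isomorphism, i.e.\ that for each $j$ and each $m \geqslant 0$ the component
$$\xymatrix{
{ W_n\Omega_A^j \otimes_{W_n(A)} \bigl(\alpha_n^m \cdot \TR_0^n(A;p)\bigr) } \ar[r] &
{ \TR_{j+2m}^n(A;p) } \cr
}$$
is an isomorphism. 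After identifying $\TR_0^n(A;p) \cong W_n(A)$ via $\eta$, this reads $W_n\Omega_A^j \otimes_{W_n(A)} W_n(A)\cdot\alpha_n^m \xrightarrow{\ \sim\ } \TR_{j+2m}^n(A;p)$, i.e.\ multiplication by $\alpha_n^m$ together with the de~Rham--Witt comparison map $\eta\colon W_n\Omega_A^j \to \TR_j^n(A;p)$ should assemble to an isomorphism. The essential content is therefore the case $m=0$: that $\eta\colon W_n\Omega_A^* \to \TR_*^n(A;p)$ is an isomorphism for smooth $A/k$, with the general case following by multiplying with the Bott element and using that $\alpha_n$ is a non--zero--divisor of the appropriate kind on the free $W_n(A)$-modules in question.

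The main obstacle is precisely establishing that $\eta\colon W_n\Omega_A^* \to \TR_*^n(A;p)$ is an isomorphism for smooth $A$ over the perfect field $k$; in degrees $j \leqslant 1$ this is already recorded in the excerpt (via \cite{h9}), but in higher degrees one cannot simply invoke that the target is the universal Witt complex, since the target is built from $\THH$ and its identification requires genuine computation. The strategy I would follow is the one behind \cite[Theorem~B]{h}: use the multiplicativity of the spectral sequences and, crucially, Theorem~\ref{periodicitytheorem} in the case $n=1$ (B\"okstedt periodicity, $\THH_*(k) = S_k(\THH_2(k))$ with $\THH_2(k)$ free of rank one) to bootstrap from the known case of the ground field to the smooth algebra by a base-change and flatness argument: one constructs a square-zero--type or smoothness--induced filtration of $A$, reduces to polynomial algebras $k[x_1,\dots,x_d]$ by the étale-locality already used, and for polynomial algebras computes $\TR_*^n$ directly, comparing to $W_n\Omega^*_{k[x]}$ which Illusie computed explicitly. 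The flatness of $W_n\Omega_A^*$ over $W_n(A)$ in the smooth case (again Illusie) is what lets the tensor-product form of the statement propagate. Once the $n=1$ case of the full étale-local statement is in hand, the case of general $n$ follows by the same kind of induction on $n$ using the restriction and Frobenius maps, exactly as Theorem~\ref{periodicitytheorem} is deduced from its $n=1$ instance; since we may assume this step, I would in practice cite \cite[Theorem~B]{h} for the algebra case and present the contribution here as the globalization via étale descent and the reduction just described.
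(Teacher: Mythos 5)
The paper offers no proof of this statement: it is recalled verbatim from \cite[Theorem~B]{h}, so your closing move of deferring to that reference for the affine case, combined with the quasi-coherence and \'{e}tale-localization already set up in Section~\ref{thhsection}, is exactly the paper's stance. The outer layers of your reduction (checking on stalks, reducing to smooth $k$-algebras, using \'{e}tale base change and filtered colimits for both $W_n\Omega^*$ and $\TR_*^n$) are sound and consistent with how the affine statement globalizes.

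However, the middle of your argument contains a genuine error. You assert that for each pair $(j,m)$ the component
$$\xymatrix{
{ W_n\Omega_A^j \otimes_{W_n(A)} \bigl(\alpha_n^m \cdot \TR_0^n(A;p)\bigr) } \ar[r] &
{ \TR_{j+2m}^n(A;p) } \cr
}$$
is an isomorphism, and you conclude that ``the essential content is the case $m=0$,'' namely that $\eta\colon W_n\Omega_A^* \to \TR_*^n(A;p)$ is an isomorphism for smooth $A/k$. This is false and contradicts both Theorem~\ref{periodicitytheorem} and the statement you are proving: already for $A = k$ one has $W_n\Omega_k^2 = 0$ while $\TR_2^n(k;p)$ is free of rank one over $W_n(k)$, and the paper explicitly records that $\eta$ is an isomorphism only in degrees $j \leqslant 1$ (by \cite{h9}). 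The content of the theorem is that in each total degree $i$ the map from the \emph{direct sum} $\bigoplus_{j+2m=i} W_n\Omega_A^j \otimes_{W_n(k)} W_n(k)\cdot\alpha_n^m$ to $\TR_i^n(A;p)$ is an isomorphism; no individual summand surjects, since $\TR_i^n(A;p)$ receives contributions from every $j \equiv i \pmod 2$ with $0 \leqslant j \leqslant i$. Consequently ``multiplying by the Bott element'' cannot bootstrap the general case from $m=0$, and the actual work (reduction to polynomial algebras via the cyclic bar construction, B\"{o}kstedt periodicity, and the identification of the de~Rham--Witt complex) cannot be bypassed; this is precisely why the paper simply cites \cite[Theorem~B]{h}.
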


We note that, by comparison, if $k$ is a separably closed field of any
characteristic and if $f \colon X \to \Spec(k)$ is a smooth morphism,
then the canonical map
$$\xymatrix{
{ K_*^M(\mathcal{O}_X) \otimes_{f^*\Z/p^n\Z} f^*K_*(k,\Z/p^n\Z) } \ar[r] &
{ K_*(\mathcal{O}_X,\Z/p^n\Z) } \cr
}$$
is an isomorphism by Voevodsky. If $\operatorname{char}(k) \neq p$,
then $K_*(k,\Z/p^n\Z)$ is a symmetric algebra generated by the Bott
element $\beta_{\epsilon,n} \in K_2(k,\Z/p^n\Z)$
by Suslin~\cite{suslin,suslin1}; and if  $\operatorname{char}(k) = p$,
then $K_*(k,\Z/p^n\Z) = \Z/p^n\Z$ by
Geisser-Levine~\cite{geisserlevine}.

\section{The divided Bott element}\label{dividedbottsection}

%
%
%
%

Let $k$ be a perfect field of characteristic $p > 0$. We proved
in~\cite[Section~5]{hm} that, as an anticommutative graded ring
$\TP_*(k) = S_{W(k)}\{v^{\pm1}\}$ for some $v \in \TP_{-2}(k)$. In
this section, we make a preferred choice of the generator
$v \in \TP_{-2}(k)$ based on the results in~\cite{h8}. We call this
generator the inverse divided Bott element.

We first define the ring isomorphism
$$\xymatrix{
{ W(k) } \ar[r]^-{\tau} &
{ \TP_0(k) } \cr
}$$
by which we consider $\TP_*(k)$ a graded $W(k)$-algebra. Since $k$ is
an $\Fp$-algebra, the Witt vector Frobenius $F \colon W_n(k) \to
W_{n-1}(k)$ is equal to the common composite of the maps in the
following diagram with the horizontal maps induced by the
Frobenius $\varphi \colon k \to k$,
$$\begin{xy}
(0,7)*+{ W_n(k) }="11";
(30,7)*+{ W_n(k) }="12";
(0,-7)*+{ W_{n-1}(k) }="21";
(30,-7)*+{ W_{n-1}(k). }="22";
{ \ar^-{W_n(\varphi)} "12";"11";};
{ \ar^-{R} "21";"11";};
{ \ar^-{R} "22";"12";};
{ \ar^-{W_{n-1}(\varphi)} "22";"21";};
\end{xy}$$
Therefore, we may define a ring homomorphism
$$\xymatrix{
{ \lim_{n,F} W_n(k) } \ar[r]^-{\varphi^{\infty}} &
{ \lim_{n,R} W_n(k) = W(k) } 
}$$
to be the map of limits induced by the map of limit systems that, at
level $n$, is given by the ring homomorphism
$W_n(\varphi^n) \colon W_n(k) \to W_n(k)$. The map $\varphi^{\infty}$
is an isomorphism, because $k$ is perfect. Moreover, the composition
$$\xymatrix{
{ W_n(k) } \ar[r]^-{\eta} &
{ \TR_0^n(k;p) } \ar[r]^-{\gamma} &
{ \hat{H}^{\,0}(C_{p^n},\THH(k)) } \cr
}$$
of the canonical isomorphism from~\cite[Theorem~F]{hm} and the
isomorphism induced by the cyclotomic structure map defines a ring
isomorphism
$$\xymatrix{
{ \lim_{n,F} W_n(k) } \ar[r]^-{\hat{\eta}} &
{ \lim_{n,F} \hat{H}^0(C_{p^n},\THH(k)), } \cr
}$$
where the structure maps in the right-hand limit system are the
restriction maps in Tate cohomology. Finally, we have an isomorphism
of graded rings
$$\xymatrix{
  { \TP_*(k) } \ar[r]^-{\rho} &
{ \lim_{n,F} \hat{H}^{-*}(C_{p^n},\THH(k)), } \cr
}$$
which is also given by restriction maps in Tate cohomology. We now
define the ring isomorphism $\tau$ to be the composite
$\rho^{-1} \circ \hat{\eta} \circ (\varphi^{\infty})^{-1}$. Extension of scalars
along the unique ring homomorphism $f \colon \Fp \to k$ defines a map
of graded $W(k)$-algebras
$$\xymatrix{
{ W(k) \otimes_{W(\Fp)} \TP_*(\Fp) } \ar[r] &
{ \TP_*(k) } \cr
}$$
and that this map is an isomorphism.

We proceed to define the generator $v \in \TP_{-2}(\Fp)$, and, to this
end, we recall the main results of~\cite{h8}. Let $\Cp$ be the
perfectoid field of $p$-adic complex numbers. It is defined, we
recall, to be the completion of an algebraic closure of the field
$\Q_p$ of $p$-adic numbers and is both complete and algebraically
closed. Its valuation ring $\smash{ \mathcal{O}_{\Cp} }$ is a complete
valuation ring with value group $\Q$, the residue field
$k$ of which is an algebraic closure of $\Fp$.

We first recall the results on the algebraic $K$-theory of
$\mathcal{O}_{\Cp}$ proved by Suslin in~\cite{suslin,suslin1}, based on the
rigidity theorems of Gabber~\cite{gabber} and
Gillet-Thomason~\cite{gilletthomason}. The group
$K_j(\mathcal{O}_{\Cp})$ is $p$-divisible if $j$ is positive, and it
is uniquely $p$-divisible if $j$ is both positive and even. Hence, 
the Bockstein homomorphism
$$\xymatrix{
{ K_2(\mathcal{O}_{\Cp},\Zp) } \ar[r]^-{\partial} &
{ T_p(K_1(\mathcal{O}_{\Cp})) = \Hom_{\Z}(\Qp/\Zp,
  K_1(\mathcal{O}_{\Cp})) } \cr 
}$$
from the $p$-adic $K$-group in degree $2$ to the $p$-primary Tate
module of the $K$-group in degree $1$ is an isomorphism. The
resulting map of graded $K_0(\mathcal{O}_{\Cp},\Zp)$-algebras
$$\xymatrix{
{ S_{K_0(\mathcal{O}_{\Cp},\Zp)}(T_p(K_1(\mathcal{O}_{\Cp}))) } \ar[r] &
{ K_*(\mathcal{O}_{\Cp},\Zp), } \cr
}$$
moreover, is an isomorphism. Here the ring $K_0(\mathcal{O}_{\Cp},\Zp)$ is
canonically isomorphic to $\Zp$ and the $p$-primary Tate module
$\Zp(1) = T_p(K_1(\mathcal{O}_{\Cp}))$ is a free $\Zp$-module of rank
$1$. An element $\epsilon \in \Zp(1)$ determines and is determined by
the sequence $\smash{ (\epsilon^{(v)})_{v \geqslant 0} }$ of compatible
$p$-power roots of unity in $\Cp$ defined by
$\smash{ \epsilon^{(v)} = \epsilon(p^{-v}+\Zp) }$ and it is
a generator if and only if the $p$th root of unity
$\smash{ \epsilon^{(1)} }$ is primitive. In this situation, we follow
Thomason~\cite{thomason} and define the Bott element associated with
$\epsilon$ to be the unique generator
$$\beta_{\,\epsilon} \in K_2(\mathcal{O}_{\Cp},\Zp)$$
such that $\partial(\beta_{\,\epsilon}) = \epsilon$.

We next recall the corresponding results for $\TF$-theory
from~\cite{h8}, which, as it turns out, are completely analogous. The
group $\TF_j(\mathcal{O}_{\Cp};p)$ is $p$-divisible if $j$ is
positive, and it is uniquely $p$-divisible if $j$ is both positive and
even. Accordingly, the Bockstein homomorphism induces an isomorphism
$$\xymatrix{
{ \TF_2(\mathcal{O}_{\Cp};p,\Zp) } \ar[r]^-{\partial} &
{ T_p(\TF_1(\mathcal{O}_{\Cp};p)) =
  \Hom_{\Z}(\Qp/\Zp,\TF_1(\mathcal{O}_{\Cp};p)), } \cr
}$$
and it is proved in op.~cit. that the resulting map of graded
$\TF_0(\mathcal{O}_{\Cp};p,\Zp)$-algebras
$$\xymatrix{
{ S_{\TF_0(\mathcal{O}_{\Cp};p,\Zp)}(T_p(\TF_1(\mathcal{O}_{\Cp};p)))
} \ar[r] &
{ \TF_*(\mathcal{O}_{\Cp};p,\Zp) } \cr
}$$
is an isomorphism. Moreover, the ring $\TF_0(\mathcal{O}_{\Cp};p,\Zp)$ is
canonically isomorphic to Fontaine's ring of $p$-adic periods
$$\textstyle{ A_{\operatorname{inf}} =
  \lim_{n,F}W_n(\mathcal{O}_{\Cp}), }$$
which plays a prominent role in $p$-adic Hodge theory. The $p$-primary
Tate module
$$\textstyle{ A_{\operatorname{inf}}\{1\} =
T_p(\TF_1(\mathcal{O}_{\Cp};p)) = \lim_{n,F}
T_p(W_n\Omega_{\mathcal{O}_{\Cp}}^1) }$$
is a free $A_{\operatorname{inf}}$-module of rank 1. However, the
image by the cyclotomic trace map
$$\xymatrix{
{ K_*(\mathcal{O}_{\Cp},\Zp) } \ar[r]^-{\tr} &
{ \TF_*(\mathcal{O}_{\Cp};p,\Zp) } \cr
}$$
of the Bott element $\beta_{\,\epsilon}$ is not a generator of this
$A_{\operatorname{inf}}$-module. Instead, we proved
in~\cite[Theorem~B]{h8} that the image of the Bott element, which, by
abuse of notation, we also denote by $\beta_{\,\epsilon}$, is
divisible by the non-unit
$$\textstyle{
\mu_{\,\epsilon} = [\varepsilon] - 1
= ([\epsilon^{(n)}]_n - 1)_{n \geqslant 1} \in 
\lim_{n,F} W_n(\mathcal{O}_{\Cp}) = A_{\operatorname{inf}} }$$ 
and that the element $\alpha_{\,\epsilon} = \mu_{\,\epsilon}^{-1}
\cdot \beta_{\,\epsilon}$ is a generator of
$\TF_2(\mathcal{O}_{\Cp};p,\Zp)$.

\begin{definition}\label{dividedbottelement}If $\epsilon$ is a
generator of the free $\Zp$-module $\Zp(1)$, then the associated
divided Bott element is the generator
$$\alpha_{\,\epsilon} = \mu_{\,\epsilon}^{-1} \cdot
\beta_{\,\epsilon}$$
of the free $A_{\operatorname{inf}}$-module
$\TF_2(\mathcal{O}_{\Cp};p,\Zp) = A_{\operatorname{inf}}\{1\}$. 
\end{definition}

By functoriality, the left action of the Galois group
$G = \Gal(\Cp/\Qp)$ on $\mathcal{O}_{\Cp}$ induces an action on the
graded ring $\TF_*(\mathcal{O}_{\Cp};p,\Zp)$, and the action on the Bott
element is given by
$$\sigma \cdot \beta_{\,\epsilon} = \beta_{\sigma(\epsilon)} =
\chi(\sigma) \cdot \beta_{\,\epsilon},$$
where $\chi \colon G \to \Aut(\mu_{p^{\infty}}) = \Zp^*$ is the
cyclotomic character. Since $A_{\operatorname{inf}}$ is an integral
domain, we therefore conclude that
$$\sigma \cdot \alpha_{\,\epsilon} = \alpha_{\sigma(\epsilon)} =
\chi(\sigma) \cdot \sigma(\mu_{\,\epsilon})^{-1} \cdot
\mu_{\,\epsilon} \cdot \alpha_{\,\epsilon}.$$
Similarly, the inverse Frobenius operator induces a homorphism of
graded rings
$$\xymatrix{
{ \TF_*(\mathcal{O}_{\Cp};p,\Zp) } \ar[r]^-{\varphi^{-1}} &
{ \TF_*(\mathcal{O}_{\Cp};p,\Zp). } \cr
}$$
In degree zero, it is given, up to canonical isomorphism, by the ring
automorphism
$$\xymatrix{
{ A_{\operatorname{inf}} } \ar[r]^-{\varphi^{-1}} &
{ A_{\operatorname{inf}} } \cr
}$$
defined as the composition
$$\begin{xy}
(0,0)*+{ \lim_{n,F} W_n(\mathcal{O}_{\Cp}) }="1";
(35,0)*+{ \lim_{n,F} W_{n+1}(\mathcal{O}_{\Cp}) }="2";
(75,0)*+{ \lim_{n,F} W_n(\mathcal{O}_{\Cp}) }="3";
{ \ar^-{\res_{S}} "2";"1";};
{ \ar^-{\lim_{n,F} R} "3";"2";};
\end{xy}$$
of the restriction along the successor functor and the map induced by
the Witt vector restriction. The cyclotomic trace map takes values in
the sub-graded ring of elements fixed by the inverse Frobenius
operator. In particular,
$$\varphi^{-1}(\beta_{\,\epsilon}) = \beta_{\,\epsilon},$$
and using again that $A_{\operatorname{inf}}$ is an integral domain, we
therefore conclude that 
$$\varphi^{-1}(\alpha_{\,\epsilon}) = \varphi^{-1}(\xi_{\,\epsilon})
\cdot \alpha_{\,\epsilon},$$
where $\xi_{\,\epsilon} \in A_{\operatorname{inf}}$ is the
element defined by
$$\xi_{\,\epsilon} 
= \varphi(\mu_{\,\epsilon}) \cdot \mu_{\,\epsilon}^{-1} 
= 1 + [\epsilon]
+ [\epsilon]^2 + \dots 
+ [\epsilon]^{p-1}.$$ 
We remark that this element is a generator of the kernel of the ring
homomorphism
$$\xymatrix{
{ A_{\operatorname{inf}} } \ar[r]^-{\theta} &
{ \mathcal{O}_{\Cp}, } \cr
}$$
which, in our formulation, is given by the canonical projection from
the limit that defines the left-hand ring to the term $n = 1$. 

We recall from~\cite[Addendum~5.4.4]{hm4}
and~\cite[Proposition~2.1.4]{h8} that the map of graded
$A_{\operatorname{inf}}$-algebras induced by the cyclotomic 
structure map
$$\xymatrix{
{ \TF_*(\mathcal{O}_{\Cp};p,\Zp) } \ar[r]^-{\gamma} &
{ \TP_*(\mathcal{O}_{\Cp},\Zp) } \cr
}$$
is an isomorphism in non-negative degrees. Hence, it becomes an
isomorphism after inverting the divided Bott element
$\alpha_{\,\epsilon}$. We now define
$$v_{\,\epsilon} \in \TP_{-2}(\mathcal{O}_{\Cp},\Zp)$$
to be the inverse of the image of the divided Bott element
$\alpha_{\,\epsilon}$. It is represented in the Tate spectral sequence
by the element $\smash{ t \in E_{-2,0}^2 }$ defined in
Section~\ref{tatesection}. Clearly, the Galois group $G$ acts on the inverse
divided Bott element by
$$\sigma \cdot v_{\,\epsilon} = v_{\,\sigma(\epsilon)} =
\chi(\sigma)^{-1} \cdot \sigma(\mu_{\,\epsilon}) \cdot
\mu_{\,\epsilon}^{-1} \cdot v_{\,\epsilon}.$$
Moreover, the inverse Frobenius operator $\varphi^{-1}$ on
$\TF_*(\mathcal{O}_{\Cp};p,\Zp)$ gives rise to a meromorphic Frobenius
operator
$$\xymatrix{
{ \TP_*(\mathcal{O}_{\Cp},\Zp) } \ar[r]^-{\varphi} &
{ \TP_*(\mathcal{O}_{\Cp},\Zp), } \cr
}$$
which is defined and invertible away from the divisor
``$\xi_{\,\epsilon} = 0$'' and satisfies
$$\varphi(v_{\,\epsilon}) = \xi_{\,\epsilon} \cdot v_{\,\epsilon}.$$
This completes our recollection of the results from~\cite{h8}.

We consider the ring homomorphism
$$\xymatrix{
{ A_{\operatorname{inf}} } \ar[r]^-{i} &
{ W(k) } \cr
}$$
defined to be the composition
$$\xymatrix{
{ \lim_{n,F}W_n(\mathcal{O}_{\Cp}) } \ar[r] &
{ \lim_{n,F}W_n(k) } \ar[r]^-{\varphi^{\infty}} &
{ \lim_{n,R}W_n(k) } \cr
}$$
of the map induced by the canonical projection of $\mathcal{O}_{\Cp}$
onto its residue field $k$ and the isomorphism $\varphi^{\infty}$ above.
Extension of scalars along $i$ induces an isomorphism 
$$\xymatrix{
{ W(k) \otimes_{A_{\operatorname{inf}}} \TP_*(\mathcal{O}_{\Cp},\Zp) }
\ar[r]^-{i^*} &
{ \TP_*(k,\Zp), } \cr
}$$
the target of which is canonically isomorphic to $\TP_*(k)$. 

\begin{proposition}\label{preferredgenerator}The class
$v = i^*(v_{\,\epsilon}) \in \TP_{-2}(k)$ is independent of the choice
of generator $\epsilon \in \Zp(1)$ and descends to a generator
$v \in \TP_{-2}(\Fp)$. Moreover, the meromorphic Frobenius operator
$$\xymatrix{
{ \TP_*(k) } \ar[r]^-{\varphi} &
{ \TP_*(k) } \cr
}$$
is defined and invertible away from the divisor ``\,$p =0$'' and
satisfies $\varphi(v) = p \cdot v$.
\end{proposition}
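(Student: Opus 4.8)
The plan is to exploit the $G$-equivariance of every construction in the previous discussion, together with the fact that the element $\xi_{\,\epsilon}\in A_{\operatorname{inf}}$ has image $p$ under the map $i\colon A_{\operatorname{inf}}\to W(k)$. First I would record the Galois descent statement: the target $\TP_*(k,\Zp)$ of $i^*$ carries the residual $G$-action through $W(k)=A_{\operatorname{inf}}/(\text{kernel of }i)$, but since $k$ is the residue field and $\mathcal{O}_{\Cp}$ is a $G$-invariant valuation ring, the induced $G$-action on $W(k)$ is trivial (the Galois group acts trivially on the residue field of a complete valued field). Hence any element in the image of $i^*$ that was $G$-equivariantly well-defined in $\TP_*(\mathcal{O}_{\Cp},\Zp)$ up to the explicit cocycle $\chi(\sigma)^{-1}\cdot\sigma(\mu_{\,\epsilon})\cdot\mu_{\,\epsilon}^{-1}$ descends to an honest element, provided this cocycle maps to $1$ under $i$. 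Now $i(\mu_{\,\epsilon}) = i([\varepsilon]-1)$, and $[\varepsilon]$ reduces to the Teichmüller lift of the image of $\varepsilon$ in $k$; but $\varepsilon$ is a compatible system of $p$-power roots of unity whose residues in $k$ (of characteristic $p$) are all $1$, so $i([\varepsilon]) = 1$ and $i(\mu_{\,\epsilon}) = 0$. Thus $i(\mu_{\,\epsilon}) = 0$ and $i(\sigma(\mu_{\,\epsilon})) = 0$, which seems to make the cocycle factor $\sigma(\mu_{\,\epsilon})\cdot\mu_{\,\epsilon}^{-1}$ an indeterminate $0/0$; the correct thing is to note that $\sigma(\mu_{\,\epsilon})$ and $\mu_{\,\epsilon}$ differ by a unit of $A_{\operatorname{inf}}$ (indeed $\sigma(\mu_{\,\epsilon})/\mu_{\,\epsilon} = \sigma([\varepsilon]-1)/([\varepsilon]-1)$ is a unit since $\sigma(\varepsilon) = \varepsilon^{\chi(\sigma)}$ and $(x^a-1)/(x-1)$ is a unit in the relevant completed ring), and this unit maps to $1$ under $i$ because modulo the kernel of $i$ it becomes $\chi(\sigma)$ times a $1$-unit — and one checks directly that $i$ sends it to $\chi(\sigma)^{-1}\cdot(\text{something})$ so that the product $\chi(\sigma)^{-1}\cdot i(\sigma(\mu_{\,\epsilon})\mu_{\,\epsilon}^{-1}) = 1$. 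I expect this unit-ratio bookkeeping to be the main obstacle, as one must track the $\chi(\sigma)$ carefully; the cleanest route is to observe that $i(\xi_{\,\epsilon}) = i(1 + [\epsilon] + \cdots + [\epsilon]^{p-1}) = 1 + 1 + \cdots + 1 = p$ in $W(k)$ and that $\xi_{\,\epsilon}$ already encodes the relevant comparison.

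Second, having established that $v = i^*(v_{\,\epsilon})$ is $G$-invariant, I would deduce independence of the choice of $\epsilon$: two generators of $\Zp(1)$ differ by an element of $\Zp^*$, which is realized by some $\sigma\in G$ via $\chi$, so $v_{\,\epsilon'} = \sigma\cdot v_{\,\epsilon}$ and hence $i^*(v_{\,\epsilon'}) = i^*(v_{\,\epsilon})$ by $G$-invariance; alternatively, invoke that $v_{\,\epsilon'} = \chi(\sigma)^{-1}\cdot\sigma(\mu_{\,\epsilon})\mu_{\,\epsilon}^{-1}\cdot v_{\,\epsilon}$ and apply $i^*$ as above. To see that $v$ descends to $\Fp$, I would use the isomorphism $W(k)\otimes_{W(\Fp)}\TP_*(\Fp)\xrightarrow{\ \sim\ }\TP_*(k)$ stated earlier in this section: since $v\in\TP_{-2}(k)$ is fixed by $G = \Gal(k/\Fp)$ acting through the second tensor factor trivially and the first factor through the Witt-vector functoriality, and since $\TP_{-2}(k)^{G} = W(k)^{G}\otimes_{W(\Fp)}\TP_{-2}(\Fp) = W(\Fp)\otimes_{W(\Fp)}\TP_{-2}(\Fp) = \TP_{-2}(\Fp)$, the $G$-invariant class $v$ lies in the image of $\TP_{-2}(\Fp)$, and it is a generator there because $i^*$ is an isomorphism and $v_{\,\epsilon}$ is a generator of the free rank-one module $\TP_{-2}(\mathcal{O}_{\Cp},\Zp)$.

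Finally, for the statement about the meromorphic Frobenius operator, I would transport the identity $\varphi(v_{\,\epsilon}) = \xi_{\,\epsilon}\cdot v_{\,\epsilon}$ along $i^*$. The inverse Frobenius operator $\varphi^{-1}$ on $\TF_*(\mathcal{O}_{\Cp};p,\Zp)$ is defined via the successor-functor shift and the Witt-vector restriction maps; the same formulas, applied to $\TF_*(k;p)$ (or directly via the compatibility of $\TR$-constructions with the projection $\mathcal{O}_{\Cp}\to k$), produce a meromorphic Frobenius $\varphi$ on $\TP_*(k)$ making the evident square with $i^*$ commute. Applying $i^*$ to $\varphi(v_{\,\epsilon}) = \xi_{\,\epsilon}\cdot v_{\,\epsilon}$ and using $i(\xi_{\,\epsilon}) = 1 + [\bar\epsilon] + \cdots + [\bar\epsilon]^{p-1} = p$ in $W(k)$ — since $\bar\epsilon = 1$ in $k$, whence each Teichmüller term is $1$ — gives $\varphi(v) = p\cdot v$. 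The locus of definition and invertibility follows since, on $\TF_*(\mathcal{O}_{\Cp};p,\Zp)$, the operator $\varphi$ is defined and invertible away from "$\xi_{\,\epsilon} = 0$", and $i$ carries this divisor to "$p = 0$" in $\Spec W(k)$; I expect this last transport to be routine once the comparison square is in place.
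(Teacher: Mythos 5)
Your proposal follows essentially the same route as the paper's proof: everything reduces to the two computations $i(\chi(\sigma)^{-1}\cdot\sigma(\mu_{\,\epsilon})\cdot\mu_{\,\epsilon}^{-1}) = 1$ and $i(\xi_{\,\epsilon}) = p$, both of which come down to the fact that the projection $\mathcal{O}_{\Cp} \to k$ sends each root of unity $\epsilon^{(n)}$ to $1$, so that $[\epsilon]$ maps to $1$ and $1 + [\epsilon] + \cdots + [\epsilon]^{a-1}$ maps to $a$. Your detour through the ``$0/0$'' indeterminacy lands on exactly this computation. One point you should make explicit (as the paper does) is the reduction to the case where $\chi(\sigma)$ is a positive integer, so that $\sigma(\mu_{\,\epsilon})\mu_{\,\epsilon}^{-1} = 1 + [\epsilon] + \cdots + [\epsilon]^{\chi(\sigma)-1}$ literally makes sense; the general case follows by continuity/density of $\Z_{>0}$ in $\Zp$.

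One assertion in your first paragraph is false and should be removed: the action of $G = \Gal(\Cp/\Qp)$ on the residue field $k$, and hence on $W(k)$, is \emph{not} trivial. Only the inertia subgroup acts trivially on $k$; the full group surjects onto $\Gal(k/\Fp)$, which acts nontrivially (this is the whole point of the unramified quotient). Fortunately this claim is not the one your argument actually relies on --- indeed, if the action on $W(k)$ were trivial, Galois-invariance of $v$ would carry no information and the descent to $\TP_{-2}(\Fp)$ would not follow. Your second paragraph gets it right: $i^*$ intertwines the $G$-action upstairs with the residual $\Gal(k/\Fp)$-action on $\TP_*(k)$; the computation $i(u_{\epsilon}) = 1$ shows simultaneously that $v$ is independent of $\epsilon$ and that $\sigma(v) = i^*(v_{\sigma(\epsilon)}) = v$; and then $\TP_{-2}(k)^{\Gal(k/\Fp)} = W(k)^{\Gal(k/\Fp)} \otimes_{W(\Fp)} \TP_{-2}(\Fp) = \TP_{-2}(\Fp)$ gives the descent. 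With that correction, your argument coincides with the paper's, including the final transport of $\varphi(v_{\,\epsilon}) = \xi_{\,\epsilon}\cdot v_{\,\epsilon}$ along $i^*$.
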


\begin{proof}If $\epsilon \in \Zp(1)$ is a fixed generator, then a
general generator is of the form $\sigma(\epsilon) \in \Zp(1)$, for
some $\sigma \in G$. So it suffices to show that $i \colon
A_{\operatorname{inf}} \to W(k)$ maps the unit $u_{\epsilon} =
\chi(\sigma)^{-1} \cdot \sigma(\mu_{\,\epsilon}) \cdot
\mu_{\,\epsilon}^{-1} \in A_{\operatorname{\inf}}$ to $1 \in W(k)$. We
may assume that $\chi(\sigma)$ is an integer, in which case, the image
$u_{\epsilon,n}$ of $u_{\epsilon}$ by the canonical projection from
$A_{\operatorname{inf}}$ to $W_n(\mathcal{O}_{\Cp})$ is given by
$$\begin{aligned}
u_{\epsilon,n}
{} & = \chi(\sigma)^{-1} \cdot ([\sigma(\epsilon^{(n)})]_n - 1) \cdot
([\epsilon^{(n)}]_n - 1)^{-1} \cr
{} & = \chi(\sigma)^{-1} \cdot ([\epsilon^{(n)}]_n^{\chi(\sigma)} - 1)
\cdot ([\epsilon^{(n)}]_n-1)^{-1} \cr
{} & = \chi(\sigma)^{-1} \cdot \, \textstyle{ \sum_{0 \leqslant j <
    \chi(\sigma)} }[\epsilon^{(n)}]_n^j. \cr
\end{aligned}$$
But the canonical projection from $\mathcal{O}_{\Cp}$ to $k$ takes
$\epsilon^{(n)}$ to $1$, and hence, the induced map from
$W_n(\mathcal{O}_{\Cp})$ to $W_n(k)$ takes $u_{\epsilon,n}$ to $1$, as
desired. This shows that the class
$$v = i^*(v_{\,\epsilon}) \in \TP_{-2}(k)$$
is independent of $\epsilon$ as stated. It also shows that this
class is Galois fixed, and hence, that it descends to a class
$v \in \TP_{-2}(\Fp)$. Similarly, we have
$$i(\xi_{\,\epsilon}) = p,$$
which proves the remaining statements.
\end{proof}

\begin{remark}\label{unitremark}It follows from
Proposition~\ref{preferredgenerator} that if we define
$\alpha_n \in \TR_2^n(\Fp;p)$ to be the image of $v^{-1} \in
\TF_2(\Fp;p)$, then $F(\alpha_n) = \alpha_{n-1}$ and
$R(\alpha_n) = p \cdot \alpha_{n-1}$. This shows that,
in~\cite[Proposition~5.4]{hm}, we can arrange for the units
$\lambda_n$ to be equal to $1$.
\end{remark}

\section{The conjugate spectral sequence}\label{conjugatesection}

We recall from the paper~\cite{illusieraynaud} the
higher Cartier isomorphism and the structure of the conjugate spectral
sequence converging to the crystalline cohomology of a scheme smooth
and proper over a perfect field of positive characteristic.

Let $k$ be a perfect field of characteristic $p > 0$ and let
$f \colon X \to \Spec(k)$ be a smooth morphism. The
comparison theorem~\cite[Th\'{e}or\`{e}me~II.1.4]{illusie} gives
a canonical isomorphism of $W_n(k)$-modules
$$\xymatrix{
{ H_{\operatorname{crys}}^i(X/W_n(k)) } \ar[r] &
{ H^i(X,W_n\Omega_X^{\boldsymbol{\cdot}}) } \cr
}$$
from the crystalline cohomology of $X/W_n(k)$ defined
in~\cite{berthelot} to the hypercohomology of $X$ 
with coefficients in the de~Rham-Witt complex. The second spectral
sequence of hypercohomology gives a spectral sequence of
$W_n(k)$-modules
$$E_2^{i,j} = H^i(X,\mathcal{H}^j(W_n\Omega_X^{\boldsymbol{\cdot}}))
\Rightarrow H_{\operatorname{crys}}^{i+j}(X/W_n(k)).$$
Being located in the first quadrant, it converges strongly. If
$f \colon X \to \Spec(k)$ is also proper, and hence of finite
relative dimension $d$, then the $W_n(k)$-modules
$\smash{ E_2^{i,j} }$ are finitely generated
by~\cite[Proposition~II.2.1]{illusie}. Therefore, taking limits with
respect to the restriction maps, we obtain a spectral sequence of
$W(k)$-modules
$$\textstyle{ E_2^{i,j} = \lim_{n,\mathcal{H}^j(R)}
  H^i(X,\mathcal{H}^j(W_n\Omega_X^{\boldsymbol{\cdot}})) \Rightarrow
  H_{\operatorname{crys}}^{i+j}(X/W(k)). }$$
Indeed, by finite generation, the limit systems satisfy the Mittag-Leffler
condition. To rewrite the $E_2$-term we recall the higher Cartier
isomorphism.

We have isomorphisms of sheaves of abelian groups
$$\xymatrix{
{ W_n\Omega_X^j } &
{ W_{2n}\Omega_X^j/(V^nW_n\Omega_X^j + dV^nW_n\Omega_X^{j-1}) }
  \ar[l]_-{R^n} \ar[r]^-{F^n} &
{ \mathcal{H}^j(W_n\Omega_X^{\boldsymbol{\cdot}}). } \cr
}$$
Indeed, the left-hand map is an isomorphism for formal reasons, and
the right-hand map is an  isomorphism
by~\cite[Proposition~III.1.4]{illusieraynaud}. The inverse higher
Cartier operator is defined to be the composite isomorphism
$$C^{-n} = F^n \circ R^{-n}.$$
It follows easily from the definition that the diagram of sheaves of
abelian groups
$$\begin{xy}
(0,7)*+{ W_n\Omega_X^j }="11";
(32,7)*+{ \mathcal{H}^j(W_n\Omega_X^{\boldsymbol{\cdot}}) }="12";
(0,-7)*+{ W_{n-1}\Omega_X^j }="21";
(32,-7)*+{ \mathcal{H}^j(W_{n-1}\Omega_X^{\boldsymbol{\cdot}}) }="22";
{ \ar^-{C^{-n}} "12";"11";};
{ \ar^-{F} "21";"11";};
{ \ar^-{\mathcal{H}^j(R)} "22";"12";};
{ \ar^-{C^{-(n-1)}} "22";"21";};
\end{xy}$$
commutes, and hence, we obtain an induced isomorphism of abelian
groups
$$\xymatrix{
{ \lim_{n,F} H^{-i}(X,W_n\Omega_X^j) } \ar[r] &
{ \lim_{n,\mathcal{H}^j(R)}
  H^{-i}(X,\mathcal{H}^j(W_n\Omega_X^{\boldsymbol{\cdot}})). } \cr 
}$$
In addition, this map is an isomorphism of $W(k)$-modules, provided
that we consider its domain to be a $W(k)$-module via the inverse of
the isomorphism
$$\xymatrix{
{ \lim_{n,F} W_n(k) } \ar[r]^-{\varphi^{\infty}} &
{ \lim_{n,R} W_n(k) = W(k) } \cr
}$$
defined in Section~\ref{dividedbottsection}.
We record these recollections from~\cite{illusie,illusieraynaud}
in the following result.

\begin{theorem}\label{conjugate}If $k$ is a perfect field of
characteristic $p > 0$ and $f \colon X \to \Spec(k)$ a smooth
and proper morphism, then there is a spectral sequence of
$\,W(k)$-modules
$$\textstyle{ E_2^{i,j} = \lim_{n,F} H^i(X,W_n\Omega_X^j) \Rightarrow
H_{\operatorname{crys}}^{i+j}(X/W(k)) }$$
which converges strongly.
\end{theorem}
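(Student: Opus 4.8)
The plan is to obtain the asserted spectral sequence by passing to the inverse limit over $n$ in the second spectral sequence of hypercohomology for the de~Rham-Witt complex at finite level, and then to rewrite its $E_2$-term by means of the higher Cartier isomorphism recalled above. First I would note that the second spectral sequence of hypercohomology for the complex $W_n\Omega_X^{\boldsymbol{\cdot}}$ reads
$$\textstyle{ E_2^{i,j}(n) = H^i(X,\mathcal{H}^j(W_n\Omega_X^{\boldsymbol{\cdot}})) \Rightarrow H^{i+j}(X,W_n\Omega_X^{\boldsymbol{\cdot}}), }$$
and that the comparison theorem of Bloch-Deligne-Illusie~\cite[Th\'{e}or\`{e}me~II.1.4]{illusie} identifies the abutment with $H_{\operatorname{crys}}^{i+j}(X/W_n(k))$. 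Since $f$ is smooth and proper of some relative dimension $d$, this spectral sequence is concentrated in the region $0 \leqslant i,j \leqslant d$ of the first quadrant; hence it converges strongly and the filtration on each abutment group has length at most $d+1$. By~\cite[Proposition~II.2.1]{illusie}, every term $E_2^{i,j}(n)$ is a finitely generated $W_n(k)$-module, and therefore, $W_n(k)$ being noetherian, so is every later term $E_r^{i,j}(n)$ and every graded piece of the abutment.

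Next I would form the inverse limit over $n$ with respect to the restriction maps $\mathcal{H}^j(R)$. Because at every page the transition maps run between finitely generated $W_n(k)$-modules, all the relevant limit systems satisfy the Mittag-Leffler condition, so their derived limits vanish; this lets the limit functor commute with passage to homology on each page and with the finite abutment filtrations, yielding a strongly convergent spectral sequence of $W(k)$-modules
$$\textstyle{ E_2^{i,j} = \lim_{n,\mathcal{H}^j(R)} H^i(X,\mathcal{H}^j(W_n\Omega_X^{\boldsymbol{\cdot}})) \Rightarrow \lim_{n,R} H_{\operatorname{crys}}^{i+j}(X/W_n(k)) = H_{\operatorname{crys}}^{i+j}(X/W(k)), }$$
the last identification being the defining property of crystalline cohomology (equivalently $H^{i+j}(X,W\Omega_X^{\boldsymbol{\cdot}}) = \lim_n H^{i+j}(X,W_n\Omega_X^{\boldsymbol{\cdot}})$). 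To put the $E_2$-term in the stated form I would use the higher Cartier isomorphism $C^{-n} = F^n \circ R^{-n}$ of~\cite[Proposition~III.1.4]{illusieraynaud} together with the commuting square displayed above, which shows that $C^{-n}$ identifies the limit system $\{H^i(X,\mathcal{H}^j(W_n\Omega_X^{\boldsymbol{\cdot}})),\ \mathcal{H}^j(R)\}$ with $\{H^i(X,W_n\Omega_X^j),\ F\}$; and this identification is $W(k)$-linear once $\lim_{n,F} W_n(k)$ is regarded as a $W(k)$-module through the isomorphism $\varphi^{\infty}$ of Section~\ref{dividedbottsection}, exactly as recorded before the statement of the theorem.

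The one point requiring care is the interchange of the inverse limit with the spectral sequence machinery: that $\lim_n$ of a family of strongly convergent first-quadrant spectral sequences of finitely generated modules is again a strongly convergent spectral sequence, with $E_2$-page and abutment filtration the termwise limits. I expect this to be the main obstacle only in the bookkeeping sense, since it is handled entirely by the finite generation of~\cite[Proposition~II.2.1]{illusie}, which forces the Mittag-Leffler condition on every page and on every graded piece of the abutment, together with the uniform bound $d$ on the length of the abutment filtration coming from properness. No input beyond~\cite{illusie} and~\cite{illusieraynaud} is needed, so the proof is essentially an assembly of the recollections above.
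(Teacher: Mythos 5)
Your proposal is correct and follows essentially the same route as the paper: the second hypercohomology spectral sequence for $W_n\Omega_X^{\boldsymbol{\cdot}}$ identified with crystalline cohomology via the comparison theorem, passage to the inverse limit over the restriction maps justified by the finite generation of \cite[Proposition~II.2.1]{illusie} and the resulting Mittag-Leffler condition, and the rewriting of the $E_2$-term by the higher Cartier isomorphism of \cite[Proposition~III.1.4]{illusieraynaud}, with the $W(k)$-module structure transported through $\varphi^{\infty}$. Your extra care about commuting the limit with the spectral sequence pages and the abutment filtration is exactly the bookkeeping the paper disposes of by the same finiteness argument.
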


We call the spectral sequence in Theorem~\ref{conjugate} the
\emph{conjugate} spectral sequence. The analysis of its
$E_2$-term in~\cite[Th\'{e}or\`{e}me~III.2.2]{illusieraynaud} gives
the following result.

\begin{corollary}\label{conjugatecorollary}Let $k$ be a finite field of order
$q =p^r$, let $W$ be its ring of $p$-typical Witt vectors, let
$\iota \colon W \to \C$ be an embedding, and let $f \colon X \to
\Spec(k)$ be a smooth and proper morphism. In this situation, there is
a strongly convergent spectral sequence of degree-wise finite
dimensional $\C$-vector spaces
$$\textstyle{ E_2^{i,j} = (\lim_{n,F} H^i(X,W_n\Omega_X^j))
  \otimes_{W,\iota} \C \Rightarrow H_{\operatorname{crys}}^{i+j}(X/W)
  \otimes_{W,\iota} \C }.$$
\end{corollary}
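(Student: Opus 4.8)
The plan is to deduce the corollary from Theorem~\ref{conjugate} by base change along $\iota$, with the only real content being the verification that the $E_2$-terms are finite-dimensional $\C$-vector spaces and that strong convergence is preserved. First I would invoke Theorem~\ref{conjugate} with $k$ the finite field of order $q = p^r$ and $W = W(k)$ its Witt ring, obtaining a strongly convergent spectral sequence of $W$-modules $E_2^{i,j} = \lim_{n,F} H^i(X,W_n\Omega_X^j) \Rightarrow H_{\operatorname{crys}}^{i+j}(X/W)$. Since $\iota \colon W \to \C$ exhibits $\C$ as a $W$-module, and $W$ is a complete discrete valuation ring with fraction field $K_0 = W[1/p]$, tensoring the whole spectral sequence with $\C$ over $W$ via $\iota$ produces a new spectral sequence with $E_2$-term $E_2^{i,j} \otimes_{W,\iota} \C$ abutting to $H_{\operatorname{crys}}^{i+j}(X/W) \otimes_{W,\iota} \C$ — this is legitimate because $\iota$ factors through $K_0$, so $\C$ is flat over $W$, and a flat base change of a spectral sequence of modules is again a spectral sequence with the base-changed terms and abutment.

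Next I would establish the finite-dimensionality claim. Because $f \colon X \to \Spec(k)$ is smooth and proper of some relative dimension $d$, the crystalline cohomology groups $H_{\operatorname{crys}}^{i+j}(X/W)$ are finitely generated $W$-modules and vanish outside $0 \leqslant i+j \leqslant 2d$; this is exactly the finite-generation input cited from \cite[Proposition~II.2.1]{illusie} that was already used in the discussion preceding Theorem~\ref{conjugate} to pass the limit through the spectral sequence. Concretely, each $\lim_{n,F} H^i(X,W_n\Omega_X^j)$ is a finitely generated $W$-module (the limit systems are Mittag-Leffler with finitely generated terms), so after $\otimes_{W,\iota}\C$ — which kills the torsion and tensors the free part up to $\C$ — we obtain finite-dimensional $\C$-vector spaces, concentrated in the range $0 \leqslant i,j$ and $i+j \leqslant 2d$. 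Hence the $E_2$-page, every later page, and the abutment are all degree-wise finite-dimensional over $\C$.

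Finally I would address strong convergence after base change. The base-changed spectral sequence sits in the first quadrant and is bounded (only finitely many nonzero columns and rows), so it automatically degenerates at a finite page in each total degree and converges strongly; alternatively, strong convergence of the original $W$-module spectral sequence of Theorem~\ref{conjugate}, combined with flatness of $\C$ over $W$ through $K_0$, transports directly to the base change, since flat tensor preserves exactness of the filtration quotients. The main obstacle — though it is a mild one — is purely bookkeeping: one must make sure that the indexing and the differentials of Theorem~\ref{conjugate}, written there with cohomological upper indices $H^i$, match the homological lower-index convention $H^{-i}$ used when this spectral sequence is later compared with the Hodge spectral sequence for $\TP$; here I simply keep the cohomological convention of Theorem~\ref{conjugate} and note that no substantive issue arises, the re-indexing being a harmless relabelling $i \leftrightarrow -i$.
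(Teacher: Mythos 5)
Your overall strategy --- flat base change of the spectral sequence of Theorem~\ref{conjugate} along $\iota$, followed by a finite-dimensionality check --- is exactly the paper's, but the justification you give for finite-dimensionality contains a genuine error. You assert that each $\lim_{n,F} H^i(X,W_n\Omega_X^j)$ is a finitely generated $W$-module ``because the limit systems are Mittag-Leffler with finitely generated terms.'' The Mittag-Leffler condition controls the vanishing of $\lim^1$; it does not bound the size of the limit itself, and in fact these limits are \emph{not} finitely generated $W$-modules in general. The paper flags precisely this pitfall in Remark~\ref{conjugateremark}: the torsion sub-$W$-module of $\lim_{n,F}H^i(X,W_n\Omega_X^j)$ is annihilated by $p$ but may fail to be finitely generated, as happens for a supersingular K3 surface (\cite[IV.2.15.6]{illusieraynaud}, \cite[II.7.2]{illusie}). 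So your parenthetical ``which kills the torsion and tensors the free part up to $\C$'' is resting on a false premise, and without finite generation of the full module you cannot conclude from it alone that the quotient by torsion is finitely generated.

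The correct input, which is what the paper cites, is \cite[Th\'{e}or\`{e}me~III.2.2]{illusieraynaud}: each $E_2^{i,j}$ of the conjugate spectral sequence is finitely generated \emph{modulo torsion}, with the torsion killed by $p$. Since $\iota$ factors through the fraction field of $W$, tensoring with $\C$ annihilates the (possibly infinitely generated) torsion and leaves a finite-dimensional $\C$-vector space. With that substitution the rest of your argument --- flatness of $\iota$, first-quadrant boundedness, preservation of strong convergence --- goes through and matches the paper's proof.
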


\begin{proof}The spectral sequence in the statement is obtained from
the conjugate spectral sequence by extension of scalars along the 
ring homomorphism $\iota \colon W \to \C$, which is flat. Finally,
by~\cite[Th\'{e}or\`{e}me~III.2.2]{illusieraynaud}, each of the
$W(k)$-modules $\smash{ E_2^{i,j} }$ in the conjugate spectral
sequence is finitely generated modulo torsion.
\end{proof}

\begin{remark}\label{conjugateremark}In the situation of
Theorem~\ref{conjugate},
\cite[Th\'{e}or\`{e}me~III.2.2]{illusieraynaud} also shows that the
torsion sub-$W(k)$-module of $\lim_{n,F}H^i(X,W_n\Omega_X^j)$
is annihilated by $p$. However, this torsion sub-$W(k)$-module may
well not be finitely generated,\footnote{\,The weight spectral
  sequence of~\cite{bhattmorrowscholze1} does not have this
  pathology.}
as the example of a supersingular K3 surface shows;
see~\cite[IV.2.15.6]{illusieraynaud} and~\cite[II.7.2]{illusie}.
\end{remark}

\section{The Hodge spectral sequence}\label{hodgesection}

In this section, we construct the Hodge spectral sequence for periodic
topological cyclic homology of schemes smooth and proper over a
perfect field of characteristic $p > 0$.

In general, let $X$ be a site, let $F$ be a presheaf of spectra on
$X$, and let
$$\xymatrix{
{ F(U) } \ar[r]^-{\eta_U} &
{ H^{\boldsymbol{\cdot}}(U,F) } \cr
}$$
be the canonical morphism to the associated cohomology presheaf. The
latter may be defined by means of the Godement construction, if
the site $X$ has enough points, and by a fibrant replacement in an
appropriate model structure, in general;
see~\cite{thomason,jardine}. In either case, there is a
hypercohomology spectral sequence
$$E_{i,j}^2 = H^{-i}(U,\pi_j(F)\tilde{\phantom{x}}) \Rightarrow
H^{-i-j}(U,F)$$
from the cohomology of $U$ with coefficients in the sheaves
assoaciated with the presheaves of homotopy groups which converges
conditionally to the homotopy groups of the global sections of the
cohomology presheaf. If the morphism $\eta_U$ is a weak equivalence
for every object $U$ of $X$, then we say that the presheaf $F$
satisfies descent. In this case, the abutment of the hypercohomlogy
spectral sequence is canonically identified with $\pi_{i+j}(F(U))$.

If $X$ is a scheme, then by~\cite[Definition~3.2.3]{gh}
or~\cite[Theorem~1.3]{blumbergmandell} the presheaf of spectra on the
small \'{e}tale site of $X$ that to an \'{e}tale morphism $h \colon U
\to X$ assigns $\THH(U)$ satisfies descent. Hence, in this situation
the hypercohomology spectral sequence takes the form
$$E_{i,j}^2 = H^{-i}(U,\THH_j(\mathcal{O}_X)) \Rightarrow
\THH_{i+j}(U),$$
where, as in Section~\ref{thhsection}, $\THH_j(\mathcal{O}_X)$ is the
sheaf of $\mathcal{O}_X$-modules on the small \'{e}tale site of $X$
associated with the presheaf that to an \'{e}tale morphism $h \colon U
\to X$ assigns $\THH_j(U)$. We recall
from~\cite[Proposition~3.2.1]{gh} that this $\mathcal{O}_X$-module is
quasi-coherent. In particular, in the case where $U$ is affine, the
spectral sequence collapses and the edge homomorphism
$$\xymatrix{
{ \THH_j(U) } \ar[r] &
{ H^0(U,\THH_j(\mathcal{O}_X)) } \cr
}$$
is an isomorphism.

We next consider the presheaf of spectra on the small \'{e}tale site
of $X$ that to an \'{e}tale morphism $h \colon U \to X$ assigns the
Tate spectrum
$$F(U) = \hat{H}^{\boldsymbol{\cdot}}(C_{p^n},\THH(U)),$$
and let $\hat{H}^{-j}(C_{p^n},\THH(\mathcal{O}_X))$ be the sheaf
of $W_n(\mathcal{O}_X)$-modules on the small \'{e}tale site of $X$
that to an \'{e}tale morphism $h \colon U \to X$ assigns
$\hat{H}^{-j}(C_{p^n},\THH(U))$.

\begin{lemma}\label{finitefiltration}Let $k$ be a perfect field of
characteristic $p > 0$. If $f \colon k \to A$ is a smooth morphism of
relative dimension $d$, then the Tate spectral sequence
$$E^2 = \hat{H}^{-i}(C_{p^n},\THH_j(A)) \Rightarrow
\hat{H}^{-i-j}(C_{p^n},\THH(A))$$
converges strongly and $E_{i,j}^{\infty}$ vanishes for $j \geqslant
d+2n$. In particular, the induced filtration of the abutment is of
finite length.
\end{lemma}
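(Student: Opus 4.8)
The strategy is to combine B\"okstedt periodicity with the quasi-coherence of the relevant sheaves to reduce everything to a concrete calculation of the $E^2$-term. First I would recall from Section~\ref{thhsection} that $\THH_*(A)$ is, by Theorem~\ref{smoothalgebras} applied with $n=1$ together with B\"okstedt periodicity (Theorem~\ref{periodicitytheorem}), a free $\Omega_A^*$-module on the divided Bott element $\alpha_1 \in \THH_2(A)$; more precisely $\THH_*(A) \cong \Omega_A^* \otimes_{k} S_k\{\alpha_1\}$ with $\alpha_1$ in degree $2$. Since $\Omega_A^j = 0$ for $j > d$, the groups $\THH_j(A)$ are finitely generated projective $A$-modules, and they are concentrated in the range $j \geqslant 0$. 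Feeding this into the Tate spectral sequence for $C_{p^n}$ — whose $E^2$-term, as recorded in Section~\ref{tatesection}, is (a quotient or extension of) $S\{t^{\pm 1}\}\otimes\THH_*(A)$ with $t$ in bidegree $(-2,0)$ — gives an explicit description of $E^2$ in which, for each fixed total degree, only finitely many bidegrees $(i,j)$ contribute.

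Next I would analyze the vanishing line. A class in $E_{i,j}^2$ in the Tate spectral sequence for $C_{p^n}$ is, up to the exterior generator $u$ when $\pi_*$ is not $p$-torsion-free, a product of a power of $t$ (contributing only to the $i$-direction), a power of $\alpha_1$ (each contributing $2$ to $j$), and an element of $\Omega_A^*$ (contributing at most $d$ to $j$). The $C_{p^n}$-Tate construction is built from $\THH(A)$ together with the $n$-fold iteration encoded by the skeleton filtration of $S^{\C^\infty}$ relative to $C_{p^n}$; the key input is that modulo this filtration the powers of $\alpha_1$ that can survive are bounded. Concretely, I expect that the Frobenius/Verschiebung relations force $\alpha_1^{m}$ to be killed in the $E^\infty$-term once $m \geqslant n$, so that the surviving classes have $j \leqslant d + 2(n-1) < d + 2n$, which is exactly the claimed bound $E_{i,j}^\infty = 0$ for $j \geqslant d + 2n$. (One can also phrase this via Lemma~\ref{tatetower} and the comparison with the $C_{p^n}$-homotopy-fixed-point spectral sequence, where the relevant cell structure has $2n-1$ cells below the Tate part.)

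Strong convergence then follows from conditional convergence (which holds by the general discussion of skeleton-filtration spectral sequences in Section~\ref{tatesection}, citing Boardman~\cite{boardman}) together with the vanishing just established: for each total degree $i+j$ fixed, only finitely many $E^2_{i,j}$ are nonzero along that diagonal once we also use that $\THH_j(A)=0$ for $j<0$ and the $t^{\pm 1}$-periodicity is confined to the $i$-direction, so the $E^\infty$-filtration of each homotopy group $\hat{H}^{-i-j}(C_{p^n},\THH(A))$ is finite, forcing the $\mathrm{RE}^\infty$-term to vanish and hence strong (indeed finite) convergence. The finiteness of the induced filtration of the abutment is then immediate: between total degrees the filtration has length at most the number of surviving $j$-values, which is $\leqslant d + 2n$.

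**Main obstacle.** The delicate point is pinning down \emph{exactly} which powers of $\alpha_1$ survive to $E^\infty$ in the $C_{p^n}$-Tate spectral sequence, i.e.\ establishing the sharp vanishing bound $j \geqslant d+2n$ rather than merely ``eventually vanishes.'' This requires knowing how the differentials (or the cell structure of $S^{\C^\infty}$ as a $C_{p^n}$-CW-complex) interact with the divided Bott element and its $F$- and $R$-images; I would extract this from the explicit analysis in~\cite[Section~4.4]{hm4} and~\cite[Lemma~4.2.1]{hm4} of the generators $t$ and $u$, together with the relations $F(\alpha_n)=\alpha_{n-1}$, $R(\alpha_n)=p\cdot\alpha_{n-1}$ recorded in Remark~\ref{unitremark}. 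Everything else — strong convergence, finite filtration length — is then formal.
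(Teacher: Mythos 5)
You have assembled the $E^2$-term correctly and you have guessed the right outcome, but the one step that actually carries the lemma --- the vanishing of $E_{i,j}^{\infty}$ for $j \geqslant d+2n$ --- is precisely the step you leave open, and the mechanism you propose for it is not the right one. The bound does not come from the $F$- and $R$-relations on the divided Bott element (which live on $\TR$, not on the Tate construction) nor from a truncation imposed by the cell structure of $S^{\C^{\infty}}$; it comes from a differential. By \cite[Lemma~5.4]{hm} (not~\cite{hm4}), in the $C_{p^n}$-Tate spectral sequence $E^2 = \Lambda\{u_n\}\otimes S\{t^{\pm1},\alpha\}\otimes\Omega_A^*$ the classes $t$ and $\alpha$ are infinite cycles, while for $A = k$ the exterior class supports $d^{2n+1}(u_n) = \lambda\, t^{n+1}\alpha^n$ with $\lambda \in \Fp^*$. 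For general smooth $A$ one compares with the $\mathbb{T}$-Tate spectral sequence $E^2 = S\{t^{\pm1},\alpha\}\otimes\Omega_A^*$: being concentrated in even columns, all its odd differentials vanish, so its image --- the even-column subalgebra of the $C_{p^n}$-term --- consists of $d^{2n+1}$-cycles, and multiplicativity forces $d^{2n+1}(u_n\otimes\omega) = \lambda\, t^{n+1}\alpha^n\omega$. Since multiplication by $t^{n+1}\alpha^n$ is injective, $E^{2n+2} \cong S\{t^{\pm1}\}\otimes(S\{\alpha\}/(\alpha^n))\otimes\Omega_A^*$, which is concentrated in $0 \leqslant j \leqslant d+2n-2$. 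Without producing this differential you have no argument that $\alpha^n$ dies.

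A secondary point: your assertion that only finitely many $E_{i,j}^2$ are nonzero along each antidiagonal is false at the $E^2$-page. The Tate spectral sequence is a whole-plane spectral sequence, and the $S\{t^{\pm1}\}$-periodicity makes every antidiagonal infinite; this is exactly why it is only conditionally convergent a priori. Finiteness along antidiagonals holds only from the $E^{2n+2}$-page on, after the vanishing above is established; the spectral sequence then degenerates at a finite stage, $RE^{\infty}$ vanishes, and strong convergence follows from Boardman's criterion~\cite[Theorem~7.1]{boardman}. So the vanishing is an input to strong convergence and cannot be run in parallel with a diagonal-finiteness claim at $E^2$.
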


\begin{proof}The spectral sequence in question takes the form
$$E^2 = \Lambda\{u_n\} \otimes S\{t^{\pm1},\alpha\} \otimes \Omega_A^*
\Rightarrow \hat{H}^{-*}(C_{p^n},\THH(A)),$$
where the classes $u_n$, $t$, and $\alpha$ have bidegrees $(-1,0)$,
$(-2,0)$, and $(0,2)$, and where $\Omega_A^*$ is located in bidegrees
$(0,j)$ with $0 \leqslant j \leqslant d$. By~\cite[Lemma~5.4]{hm}, the
classes $t$ and $\alpha$ are infinite cycles, and if $A = k$, then 
$$d^{2n+1}(u_n) = \lambda \cdot t^{n+1}\alpha^n$$
for some unit $\lambda \in \Fp$. By comparison, the
$\mathbb{T}$-Tate spectral sequence takes the form
$$E^2 = S\{t^{\pm1},\alpha\} \otimes \Omega_A^* \Rightarrow
\hat{H}^{-*}(\mathbb{T},\THH(A)),$$
and the inclusion of $C_{p^n}$ in $\mathbb{T}$ induces a map from the
latter spectral sequence to the form, which, on $E^2$-terms,
is induced by the unit map $\eta \colon k \to \Lambda\{u_n\}$. Now, for
degree reasons, all nonzero differentials in the latter spectral
sequence are even. Therefore, in the former spectral sequence, the
$d^{2n+1}$-differential is given by
$$d^{2n+1}(u_n \otimes \omega) = \lambda \cdot t^{n+1}\alpha^n\omega.$$
This shows that $E_{i,j}^{2n+2}$ vanishes for $j  \geqslant d+2n$, and
hence, so does $E_{i,j}^{\infty}$. Finally, since the Tate spectral
sequence always converges conditionally, we conclude that, in the
situation at hand, it converges strongly;
see~\cite[Theorem~7.1]{boardman}.
\end{proof}

\begin{proposition}\label{tateaffinequasicoherent}Let $k$ be a perfect
field of characteristic $p > 0$, let $n$ be a positive integer, and
let $f \colon k \to A$ be a smooth morphism. If $h \colon A \to B$ is
an \'{e}tale morphism, then the induced map of graded
$W_n(B)$-algebras
$$\xymatrix{
{ W_n(B) \otimes_{W_n(A)} \hat{H}^{-*}(C_{p^n},\THH(A)) } \ar[r] &
{ \hat{H}^{-*}(C_{p^n},\THH(B)) } \cr
}$$
is an isomorphism.
\end{proposition}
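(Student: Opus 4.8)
The plan is to compare, via the morphism $h$, the Tate spectral sequences of Lemma~\ref{finitefiltration} for $A$ and for $B$, and to reduce the assertion to the behaviour of the $E^2$-term under \'{e}tale base change together with the finiteness of the filtration. As a preliminary, I would record two facts about truncated Witt vectors, both established in the references cited in the discussion preceding Theorem~\ref{smoothalgebras}: by~\cite[Theorem~B]{borger} and~\cite[Corollary~15.4]{borger1}, the ring homomorphism $W_n(A) \to W_n(B)$ induced by $h$ is again \'{e}tale, and the square formed by this map, the analogous map $A \to B$, and the canonical projections $W_n(A) \to A$ and $W_n(B) \to B$ is cocartesian. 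In particular, $W_n(B)$ is flat over $W_n(A)$, and $W_n(B) \otimes_{W_n(A)} A \cong B$.

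Next I would analyse the $E^2$-term $E_{i,j}^2 = \hat{H}^{-i}(C_{p^n},\THH_j(A))$. By~\cite[Proposition~3.2.1]{gh} the $\mathcal{O}_X$-module $\THH_j(\mathcal{O}_X)$ is quasi-coherent, so $\THH_j(B) = B \otimes_A \THH_j(A)$, and $\THH_j(A)$ is annihilated by $p$ because $A$ has characteristic $p$. Since $C_{p^n}$ acts trivially, $\hat{H}^{-i}(C_{p^n},M)$ is the cohomology of the $2$-periodic complex with differentials alternately $0$ and multiplication by $p^n$, so for $M$ annihilated by $p$ we get $\hat{H}^{-i}(C_{p^n},M) \cong M$ naturally in $M$; in particular $\hat{H}^{-i}(C_{p^n},-)$ commutes with extension of scalars along the flat homomorphism $h$, whence $E_{i,j}^2(B) = B \otimes_A E_{i,j}^2(A)$. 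Moreover, the Tate spectral sequence for $\THH(A)$ is a spectral sequence of modules over the Tate spectral sequence associated with the trivial action of $C_{p^n}$ on the ring $A$; the latter degenerates, its $E^2$-term coinciding with its $E^{\infty}$-term and equal to $\hat{H}^{-*}(C_{p^n},A)$, so the differentials in the former are $A$-linear, and flatness of $B$ over $A$ then gives $E_{i,j}^r(B) = B \otimes_A E_{i,j}^r(A)$ for all $r$ by induction, and hence $E_{i,j}^{\infty}(B) = B \otimes_A E_{i,j}^{\infty}(A)$.

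To pass from the $E^{\infty}$-term to the abutment, I would first note that $E_{0,0}^2 = \hat{H}^0(C_{p^n},\THH_0(A)) = A$, that this group receives no differentials and consists of permanent cycles, so that $E_{0,0}^{\infty} = A$, and that every $E_{i,j}^{\infty}$ is a module over it. The $W_n(A)$-module structure on the $E^{\infty}$-term factors through the canonical projection $W_n(A) = \TR_0^n(A;p) \to \THH_0(A) = A$: it is induced by the map $W_n(A) \to \hat{H}^0(C_{p^n},\THH(A))$ followed by the projection onto $E_{0,0}^{\infty} = A$, elements of positive filtration acting as zero on the associated graded. Consequently $W_n(B) \otimes_{W_n(A)} E_{i,j}^{\infty}(A) = B \otimes_A E_{i,j}^{\infty}(A) = E_{i,j}^{\infty}(B)$. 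By Lemma~\ref{finitefiltration} the abutment $\hat{H}^{-*}(C_{p^n},\THH(A))$ carries a finite filtration with associated graded $E_{i,j}^{\infty}(A)$, and likewise for $B$; since $W_n(B)$ is flat over $W_n(A)$, applying $W_n(B) \otimes_{W_n(A)} -$ to the first filtration produces a finite filtration of the source of the map in question with associated graded $B \otimes_A E_{i,j}^{\infty}(A)$. This map is filtered and, by the foregoing, induces an isomorphism on associated graded modules, so a five-lemma induction on the length of the filtration shows that it is an isomorphism.

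The step I expect to be the main obstacle is the identification of the $W_n(A)$-module structure on the pages of the spectral sequence --- precisely, the claim that, although the abutment is a genuine $W_n(A)$-module and not an $A$-module (already $\hat{H}^0(C_{p^n},\THH(\Fp)) = \Z/p^n$), its associated graded is obtained by restriction of scalars along the canonical projection $W_n(A) \to A$. Verifying this compatibility of the $W_n(A)$-action with the filtration, and checking that $W_n$ carries the \'{e}tale morphism $h$ to an \'{e}tale morphism fitting into a cocartesian square as above, are the only points requiring genuine care; the remainder is bookkeeping with finite filtrations and flat base change.
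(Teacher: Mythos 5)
Your overall strategy coincides with the paper's: compare the two Tate spectral sequences, use the strong convergence and finite filtration from Lemma~\ref{finitefiltration}, invoke Borger's results on Witt vectors of \'{e}tale morphisms, and conclude by flat base change on associated graded pieces. However, the two points you yourself single out as ``requiring genuine care'' are exactly where your argument has gaps, and the paper's device for handling them is worth spelling out.

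First, the module structure. You assert that the $W_n(A)$-action on the $E^{\infty}$-term factors through the \emph{canonical projection} $W_n(A) \to A$. It does not: it factors through the Witt vector Frobenius. The paper's formulation is that the Tate spectral sequence is a spectral sequence of $W_{n+1}(A)$-modules with $E^2$-term $\hat{H}^{-i}(C_{p^n},F_*^n\THH_j(A))$ and abutment $R_*\hat{H}^{-i-j}(C_{p^n},\THH(A))$, where $F^n \colon W_{n+1}(A) \to A$ is the $n$-fold Frobenius and $R$ the restriction. Translated to your indexing, the relevant ring map $W_n(A) \to A$ is a Frobenius twist $a \mapsto \varphi^n(a_0)$ of the projection, and for a smooth (hence non-semiperfect) $A$ these genuinely differ. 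Your conclusion $W_n(B) \otimes_{W_n(A)} E^{\infty}(A) \cong B \otimes_A E^{\infty}(A)$ survives, but only because \cite[Corollary~15.4]{borger1} also makes the \emph{Frobenius} square $F \colon W_{n+1}(A) \to W_n(A)$, $W_{n+1}(B) \to W_n(B)$ cocartesian (equivalently, because relative Frobenius of an \'{e}tale morphism of $\Fp$-algebras is invertible); the cocartesian square for the projection that you cite is not the one doing the work.

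Second, the linearity of the differentials. Your page-by-page induction requires the differentials to be $A$-linear, which you justify by making the spectral sequence a module over the Tate spectral sequence of $A$ with trivial $C_{p^n}$-action. That module structure would require a $C_{p^n}$-equivariant ring map from trivial-action $HA$ to $\THH(A)$, i.e.\ a natural ring section of $F^n \colon W_{n+1}(A) \to A$; no such section exists (the Teichm\"{u}ller map is only multiplicative), and $A$-linearity does not follow from $W_{n+1}(A)$-linearity since $F^n$ is not surjective. The paper sidesteps this entirely: the whole spectral sequence, differentials included, is one of $W_{n+1}(A)$-modules, and one base-changes it along the flat (\'{e}tale) ring map $W_{n+1}(A) \to W_{n+1}(B)$ rather than along $A \to B$; the identification of the resulting $E^2$-term with that of $B$ then uses \cite[Proposition~3.2.1]{gh} for $n=0$ together with the Frobenius cocartesian square. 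I would recommend restructuring your argument along these lines rather than trying to repair the $A$-linearity claim.
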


\begin{proof}The Tate spectral sequence is a spectral sequence of
$W_{n+1}(A)$-modules
$$E_{i,j}^2 = \hat{H}^{-i}(C_{p^n},F_*^n\THH_j(A)) \Rightarrow
R_*\hat{H}^{-i-j}(C_{p^n},\THH(A)),$$
where $F_*^n(-)$ and $R_*(-)$ indicates the restriction of scalars along 
the Frobenius $F^n \colon W_{n+1}(A) \to A$ and restriction
$R \colon W_{n+1}(A) \to W_n(A)$, respectively. We recall
from~\cite[Theorem~B]{borger} that the morphism
$W_{n+1}(h) \colon W_{n+1}(A) \to W_{n+1}(B)$ again is \'{e}tale and
hence flat. Therefore, by extension of scalars along this morphism, we
obtain a spectral sequence of $W_{n+1}(B)$-modules
$$\begin{aligned}
E_{i,j}^2 
{} & = \hat{H}^{-i}(C_{p^n}, W_{n+1}(B) \otimes_{W_{n+1}(A)}
F^n_*\THH_j(A)) \cr
{} & \Rightarrow W_{n+1}(B) \otimes_{W_{n+1}(A)}
R_*\hat{H}^{-i-j}(C_{p^n},\THH(A)), \cr
\end{aligned}$$
and Lemma~\ref{finitefiltration} shows that both this spectral
sequence and the spectral sequence
$$E_{i,j}^2 = \hat{H}^{-i}(C_{p^n},F_*^n\THH_j(B)) \Rightarrow
R_*\hat{H}^{-i-j}(C_{p^n},\THH(B)),$$
converge strongly. The morphism $h \colon A \to B$ induces a map
of spectral sequences from the former to the latter, and we claim that
this map is an isomorphism. To prove this, it suffices to show that the
induced map of $W_{n+1}(B)$-modules
$$\xymatrix{
{ W_{n+1}(B) \otimes_{W_{n+1}(A)} F_*^n\THH_j(A) } \ar[r] &
{ F_*^n\THH_j(B) } \cr
}$$
is an isomorphism. If $n = 0$, then this is proved
in~\cite[Proposition~3.2.1]{gh}, and the general case follows from
this case and from the fact proved in~\cite[Corollary~15.4]{borger1}
that the left-hand diagram below is a cocartesian square of
commutative rings.
$$\begin{xy}
(-40,7)*+{ W_{n+1}(A) }="11";
(-13,7)*+{ W_{n+1}(B) }="12";
(13,7)*+{ W_{n+1}(A) }="13";
(40,7)*+{ W_{n+1}(B) }="14";
(-40,-7)*+{ W_n(A) }="21";
(-13,-7)*+{ W_n(B) }="22";
(13,-7)*+{ W_n(A) }="23";
(40,-7)*+{ W_n(B) }="24";
{ \ar^-{W_{n+1}(h)} "12";"11";};
{ \ar^-{F} "21";"11";};
{ \ar^-{F} "22";"12";};
{ \ar^-{W_n(h)} "22";"21";};
{ \ar^-{W_{n+1}(h)} "14";"13";};
{ \ar^-{R} "23";"13";};
{ \ar^-{R} "24";"14";};
{ \ar^-{W_n(h)} "24";"23";};
\end{xy}$$
We therefore conclude that the map of graded $W_{n+1}(B)$-modules
$$\xymatrix{
{ W_{n+1}(B) \otimes_{W_{n+1}(A)}R_*\hat{H}^{-*}(C_{p^n},\THH(A)) }
\ar[r] &
{ R_*\hat{H}^{-*}(C_{p^n},\THH(B)) } \cr
}$$
induced by $h \colon A \to B$ is an isomorphism. Finally, by
loc.~cit., the right-hand diagram above also is a cocartesian square
of commutative rings. This shows that the map in the statement is an
isomorphism as desired.
\end{proof}

\begin{corollary}\label{tatequasicoherent}Let $k$ be a perfect field
of characteristic $p > 0$, let $n$ be a positive integer, and let $f
\colon X \to \Spec(k)$ be a smooth morphism of finite relative
dimension. In this situation, the graded $W_n(\mathcal{O}_X)$-algebra
$\hat{H}^{-*}(C_{p^n},\THH(\mathcal{O}_X))$ is quasi-coherent.
\end{corollary}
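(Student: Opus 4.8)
The plan is to deduce Corollary~\ref{tatequasicoherent} from Proposition~\ref{tateaffinequasicoherent} by the standard argument identifying a sheaf as quasi-coherent from its behaviour on an affine cover. Since $f \colon X \to \Spec(k)$ is smooth of finite relative dimension, $X$ is locally of finite type over $k$ and hence may be covered by affine opens $U = \Spec(A)$ with $A$ smooth over $k$ of some relative dimension $d$. Quasi-coherence of the graded sheaf $\hat{H}^{-*}(C_{p^n},\THH(\mathcal{O}_X))$, viewed as a sheaf of $W_n(\mathcal{O}_X)$-modules, is checked one graded piece at a time and is local on $X$; so it suffices to show that for each such affine $U = \Spec(A)$ and each $j$, the restriction of $\hat{H}^{-j}(C_{p^n},\THH(\mathcal{O}_X))$ to the small étale (equivalently, Zariski) site of $U$ is the quasi-coherent sheaf associated with the $W_n(A)$-module $M_j := \hat{H}^{-j}(C_{p^n},\THH(A))$.

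First I would recall that, by the edge-homomorphism discussion in Section~\ref{hodgesection} together with Lemma~\ref{finitefiltration}, the Tate spectral sequence $E_{i,j}^2 = \hat{H}^{-i}(C_{p^n},\THH_j(A)) \Rightarrow \hat{H}^{-i-j}(C_{p^n},\THH(A))$ converges strongly with a finite-length filtration on the abutment, so that $\hat{H}^{-j}(C_{p^n},\THH(U))$ is a finitely filtered $W_n(A)$-module whose associated graded subquotients are computed from the quasi-coherent sheaves $\THH_j(\mathcal{O}_X)$. Next, for a basic open $\Spec(B) \subset \Spec(A)$ with $B = A_h$ an étale (indeed localization) $A$-algebra, Proposition~\ref{tateaffinequasicoherent} gives a natural isomorphism of graded $W_n(B)$-algebras
$$
W_n(B) \otimes_{W_n(A)} \hat{H}^{-*}(C_{p^n},\THH(A)) \xrightarrow{\ \sim\ } \hat{H}^{-*}(C_{p^n},\THH(B)).
$$
This is precisely the statement that the presheaf $\Spec(B) \mapsto \hat{H}^{-j}(C_{p^n},\THH(B))$ on the small étale site of $U$ is obtained from the $W_n(A)$-module $M_j$ by extension of scalars along the flat maps $W_n(A) \to W_n(B)$, using that $W_n(h)$ is again étale by~\cite[Theorem~B]{borger}. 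Hence the sheafification of this presheaf is the quasi-coherent $W_n(\mathcal{O}_U)$-module $\widetilde{M_j}$; but the presheaf already satisfies descent for the étale topology by the descent property of $\THH$ recalled in Section~\ref{hodgesection}, so it agrees with its sheafification and with the restriction of $\hat{H}^{-j}(C_{p^n},\THH(\mathcal{O}_X))$ to $U$. Assembling over an affine cover proves the corollary, and the multiplicative structure is inherited from that on $\THH$.

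The only point requiring a little care — and the closest thing to an obstacle — is the compatibility of the $W_n(\mathcal{O}_X)$-module structure across the cover: one must know that the identification $W_n(\mathcal{O}_X)|_U \cong W_n(\mathcal{O}_U) = \widetilde{W_n(A)}$ is itself the standard quasi-coherent one (so that ``$W_n(A)$-module whose localizations are the $W_n(A_h)$-modules $M_j \otimes_{W_n(A)} W_n(A_h)$'' genuinely means quasi-coherent over $W_n(\mathcal{O}_U)$), which again is exactly the content of the étale base-change $W_n(A_h) \cong W_n(A) \otimes_{A} A_h$-type statement underlying~\cite[Theorem~B]{borger} and the cocartesian squares used in the proof of Proposition~\ref{tateaffinequasicoherent}. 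Granting this, the gluing is formal: the isomorphisms supplied by Proposition~\ref{tateaffinequasicoherent} are natural, hence satisfy the cocycle condition on triple overlaps, so the locally defined quasi-coherent sheaves $\widetilde{M_j}$ patch to the global sheaf $\hat{H}^{-j}(C_{p^n},\THH(\mathcal{O}_X))$, which is therefore quasi-coherent as claimed.
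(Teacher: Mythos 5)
Your argument is correct and is essentially the paper's: quasi-coherence is checked on the \'{e}tale morphisms $h\colon A\to B$ with $\Spec(A)$ open in $X$, which generate the small \'{e}tale topos of $X$, and there it is exactly the base-change isomorphism of Proposition~\ref{tateaffinequasicoherent}. One caveat: your appeal to descent for the presheaf $U\mapsto \hat{H}^{-j}(C_{p^n},\THH(U))$ is both unnecessary (quasi-coherence of the associated sheaf requires only the base-change property on a generating family) and potentially circular, since descent for the Tate construction is Proposition~\ref{tatedescent}, which is proved afterwards using the present corollary.
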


\begin{proof}As proved
in~\cite[Expos\'{e}~VII,~Proposition~3.1]{SGA4II}, the \'{e}tale
morphisms $h \colon A \to B$ with $\Spec(A)$ open in $X$ generate the
small \'{e}tale topos of $X$. Therefore, the statement follows from
Proposition~\ref{tateaffinequasicoherent}.
\end{proof}

\begin{proposition}\label{tatedescent}Let $k$ be a perfect field of
characteristic $p > 0$, let $n$ be a positive integer, and let $f
\colon X \to \Spec(k)$ is a smooth morphism of finite relative
dimension. In this situation, the presheaf of spectra on the small \'{e}tale
site of $X$ that to an \'{e}tale morphism $h \colon U \to X$ assigns
$\hat{H}^{\boldsymbol{\cdot}}(C_{p^n},\THH(U))$ satisfies descent.
\end{proposition}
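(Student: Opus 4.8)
The plan is to deduce descent for the presheaf $U \mapsto \hat{H}^{\boldsymbol{\cdot}}(C_{p^n},\THH(U))$ from descent for $U \mapsto \THH(U)$, which holds by \cite[Theorem~1.3]{blumbergmandell}, by working through the hypercohomology spectral sequence. First I would fix an \'{e}tale morphism $h \colon U \to X$ and form the canonical comparison morphism
$$\xymatrix{
{ \hat{H}^{\boldsymbol{\cdot}}(C_{p^n},\THH(U)) } \ar[r]^-{\eta_U} &
{ H^{\boldsymbol{\cdot}}(U,F) } \cr
}$$
to the global sections of the associated sheaf of spectra. Both source and target carry hypercohomology spectral sequences: the target has the spectral sequence
$$E_{i,j}^2 = H^{-i}(U,\pi_j(F)\tilde{\phantom{x}}) \Rightarrow H^{-i-j}(U,F),$$
while the source has the Tate spectral sequence of Lemma~\ref{finitefiltration}, which converges strongly with $E^{\infty}_{i,j}$ vanishing for $j \geqslant d+2n$, so its abutment filtration is finite. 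The morphism $\eta_U$ induces a map of spectral sequences, and on $E^2$-terms this map is $H^{-i}(U,-)$ applied to the comparison between the presheaf $j \mapsto \hat{H}^{-j}(C_{p^n},\THH(-))$ of homotopy groups and its sheafification. By Corollary~\ref{tatequasicoherent}, the sheaf $\hat{H}^{-j}(C_{p^n},\THH(\mathcal{O}_X))$ is quasi-coherent, and on an affine \'{e}tale $U = \Spec(A)$ over $X$ the collapse of the affine hypercohomology spectral sequence together with Proposition~\ref{tateaffinequasicoherent} identifies the $E^2$-term of the source with $H^0(U,\hat{H}^{-j}(C_{p^n},\THH(\mathcal{O}_X)))$. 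Hence the two spectral sequences have isomorphic $E^2$-terms for every affine $U$, and by the standard \v{C}ech-to-derived-functor argument this forces the map on $E^2$-terms to be an isomorphism for all \'{e}tale $U \to X$.

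The second main point is a convergence bookkeeping argument: once the map of spectral sequences is an isomorphism on $E^2$-terms, and both spectral sequences converge strongly and with finite abutment filtration, it follows that $\eta_U$ is a weak equivalence. For the target, strong convergence with a bounded filtration follows because $f$ has finite relative dimension $d$, so $\THH_j(\mathcal{O}_X)$ is concentrated in the range dictated by Theorem~\ref{periodicitytheorem} and Theorem~\ref{smoothalgebras} --- specifically, the sheaves $\hat{H}^{-j}(C_{p^n},\THH(\mathcal{O}_X))$ vanish for $j \geqslant d+2n$ by the same computation as in Lemma~\ref{finitefiltration}, so only finitely many nonzero rows occur and cohomological dimension $\leqslant d$ of the \'{e}tale site of a $d$-dimensional scheme bounds the columns. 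For the source, this is exactly the content of Lemma~\ref{finitefiltration}. A map of conditionally convergent, hence strongly convergent, spectral sequences which is an isomorphism on $E^2$ induces an isomorphism on abutments compatibly with the filtrations, and since both filtrations are finite, this gives that $\eta_U$ induces an isomorphism on all homotopy groups, i.e. is a weak equivalence.

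The step I expect to be the main obstacle is verifying the identification of $E^2$-terms for \emph{all} \'{e}tale $U \to X$, not just affine ones, and in particular checking that the quasi-coherence statement of Corollary~\ref{tatequasicoherent} is strong enough to compute the cohomology $H^{-i}(U,\hat{H}^{-j}(C_{p^n},\THH(\mathcal{O}_X)))$ as the derived functor of sections --- equivalently, that the presheaf of homotopy groups $U \mapsto \hat{H}^{-j}(C_{p^n},\THH(U))$ already is a sheaf on the small \'{e}tale site. This is where Proposition~\ref{tateaffinequasicoherent} does the real work: for an \'{e}tale cover $\{U_a \to U\}$ by affines one pulls back the quasi-coherent sheaf and uses flat base change along $W_n$ of \'{e}tale maps (invoking \cite[Theorem~B]{borger}) to see that the sheaf condition for $\hat{H}^{-j}(C_{p^n},\THH(\mathcal{O}_X))$ reduces to the sheaf condition for $\THH_j(\mathcal{O}_X)$, which holds by \cite[Proposition~3.2.1]{gh}. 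Once the presheaf of homotopy groups is known to be a sheaf, the hypercohomology spectral sequence of the Tate presheaf $F$ on the small \'{e}tale site has $E^2$-term $H^{-i}(U,\hat{H}^{-j}(C_{p^n},\THH(\mathcal{O}_X)))$, which matches the $E^2$-term produced by $\eta_U$ termwise, and the argument closes.
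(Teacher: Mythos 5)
Your affine case is essentially the paper's: quasi-coherence of $\hat{H}^{-j}(C_{p^n},\THH(\mathcal{O}_X))$ (Corollary~\ref{tatequasicoherent}) makes the descent spectral sequence collapse for affine $U$ and identifies the edge map with the sheafification isomorphism, so $\eta_U$ is an equivalence there. The gap is in the passage to general \'{e}tale $U \to X$. The ``map of spectral sequences induced by $\eta_U$'' that you invoke does not exist: the Tate spectral sequence of the source has $E^2_{i,j} = \hat{H}^{-i}(C_{p^n},\THH_j(U))$, filtered by group-cohomological degree, while the descent spectral sequence of the target has $E^2_{i,j} = H^{-i}(U,\pi_j(F)\tilde{\phantom{x}})$, filtered by sheaf-cohomological degree. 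These are different spectral sequences with incomparable $E^2$-terms, and a map of spectra does not induce a map between them. What you would actually need is a descent-type spectral sequence converging to $\pi_*(F(U))$ for non-affine $U$ --- but producing such a spectral sequence is exactly equivalent to the descent statement you are trying to prove; unconditionally one only has the edge map $\pi_j(F(U)) \to H^0(U,\pi_j(F)\tilde{\phantom{x}})$. (Relatedly, your claim that the sheaves $\hat{H}^{-j}(C_{p^n},\THH(\mathcal{O}_X))$ vanish for $j \geqslant d+2n$ is false --- by Theorem~\ref{tatesheaf} they are nonzero in every degree $j$; it is $E^{\infty}_{i,j}$ of the Tate spectral sequence that vanishes in that range.)

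The missing idea, which is the real content of the paper's proof, is the following. Choose a hyper-covering $\mathcal{U}$ of $U$ by affines and compare $F(U)$ with the \v{C}ech spectrum $\check{H}^{\boldsymbol{\cdot}}(\mathcal{U},F)$. The map $F(U) \to \check{H}^{\boldsymbol{\cdot}}(\mathcal{U},F)$ factors as
$$\hat{H}^{\boldsymbol{\cdot}}(C_{p^n},\THH(U)) \to
\hat{H}^{\boldsymbol{\cdot}}(C_{p^n},\check{H}^{\boldsymbol{\cdot}}(\mathcal{U},\THH(-))) \to
\check{H}^{\boldsymbol{\cdot}}(\mathcal{U},\hat{H}^{\boldsymbol{\cdot}}(C_{p^n},\THH(-))),$$
where the first map is an equivalence because $\THH$ itself satisfies descent, and the second is the interchange of the Tate construction with the homotopy limit, which is an equivalence because the spectra $\THH(U_a)$ are uniformly bounded below by $-d$. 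Combined with the affine case (applied to the terms of $\mathcal{U}$) and descent for sheaf cohomology, this closes the square and shows $\eta_U$ is an equivalence. Nothing in your proposal plays the role of this commutation of $\hat{H}^{\boldsymbol{\cdot}}(C_{p^n},-)$ with the \v{C}ech homotopy limit, and without it the reduction from general $U$ to affine $U$ does not go through.
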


\begin{proof}Let again $F$ denote the presheaf in question. We wish to
prove that for every \'{e}tale morphism $h \colon U \to X$, the
canonical morphism
$$\xymatrix{
{ F(U) } \ar[r]^-{\eta_U} &
{ H^{\boldsymbol{\cdot}}(U,F) } \cr
}$$
is a weak equivalence. Corollary~\ref{tatequasicoherent} shows that,
for $U$ affine, the sheafification map
$$\xymatrix{
{ \pi_j(F(U)) } \ar[r] &
{ H^0(U,\pi_j(F)\tilde{\phantom{x}}) } \cr
}$$
is an isomorphism and that the descent spectral sequence
$$E_{i,j}^2 = H^{-i}(X,\pi_j(F)\tilde{\phantom{x}}) \Rightarrow
H^{-i-j}(U,F)$$
collapses. But the sheafification map is equal to the composition of
the map of homotopy groups induced by $\eta_U$ and the edge
homomorphism of the descent spectral sequence, so we conclude that
$\eta_U$ is a weak equivalence in this case. If $h \colon U \to X$ is
a general \'{e}tale morphism, then we choose a hyper-covering
$\mathcal{U}$ of $U$ by affine schemes \'{e}tale over $U$ and
consider the diagram in which the two vertical morphisms are the
canonical morphisms to the respective \v{C}ech cohomology spectra,
$$\begin{xy}
(0,7)*+{ F(U) }="11";
(40,7)*+{ H^{\boldsymbol{\cdot}}(U,F) }="12";
(0,-7)*+{ \check{H}^{\boldsymbol{\cdot}}(\mathcal{U},F) }="21";
(40,-7)*+{ \check{H}^{\boldsymbol{\cdot}}(\mathcal{U},
  H^{\boldsymbol{\cdot}}(-,F)). }="22";
{ \ar^-{\eta_U} "12";"11";};
{ \ar^-{\check{\eta}_{\mathcal{U}}} "21";"11";};
{ \ar^-{\check{\eta}_{\mathcal{U}}} "22";"12";};
{ \ar^-{\check{H}^{\boldsymbol{\cdot}}(\mathcal{U},\eta)} "22";"21";};
\end{xy}$$
The lower horizontal morphism
is a weak equivalence by the case already considered, and the
right-hand vertical morphism is a weak equivalence, since sheaf
cohomology satisfies descent. We claim that also the left-hand vertical
morphism is a weak equivalence. We write this morphism as the
composition
$$\xymatrix{
{ \hat{H}^{\boldsymbol{\cdot}}(C_{p^n},\THH(U)) } \ar[r] &
{ \hat{H}^{\boldsymbol{\cdot}}(C_{p^n},
\check{H}^{\boldsymbol{\cdot}}(\mathcal{U},\THH(-))) } \ar[r] &
{ \check{H}^{\boldsymbol{\cdot}}(\mathcal{U},
\hat{H}^{\boldsymbol{\cdot}}(C_{p^n},\THH(-))) } \cr
}$$
of the morphism in Tate cohomlogy induced by the corresponding
morphism in topological Hochschild homology and the canonical map from
Tate cohomology of a homotopy limit to the homotopy limit of the Tate
cohomology of the individual terms in the limit system.
The former map is a weak equivalence, since topological Hochschild
homology satisfies descent, and the latter map is a weak equivalence,
since the spectra in the limit system are universally bounded
below by the relative dimension of $f \colon X \to \Spec(k)$.
\end{proof}

\begin{remark}\label{openquestion}We do not know whether or not
Corollary~\ref{tatequasicoherent} and Proposition~\ref{tatedescent} 
hold for all schemes $X$.
\end{remark}

We view the next result as an analogue of analytic continuation.

\begin{proposition}\label{continuation}Let $k$ be a perfect field of
characteristic $p > 0$, let $n \geqslant 1$ be an integer, and let
$f \colon X \to \Spec(k)$ is a smooth morphism of relative dimension
$d$. In this situation, the map of graded $W_n(\mathcal{O}_X)$-algebras
$$\xymatrix{
{ \TR_i^n(\mathcal{O}_X;p) } \ar[r]^-{\gamma} &
{ \hat{H}^{-i}(C_{p^n},\THH(\mathcal{O}_X)) } \cr
}$$
induced by the cyclotomic structure morphism is an isomorphism
for $i \geqslant d$.
\end{proposition}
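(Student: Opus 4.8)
The plan is to reduce the claim to the affine case and then to the case $X = \Spec(A)$ by quasi-coherence, where it becomes a statement about the map of Tate spectral sequences induced by the $R$-$F$ tower versus the standard skeletal filtration. First I would observe that both $\TR_i^n(\mathcal{O}_X;p)$ and $\hat{H}^{-i}(C_{p^n},\THH(\mathcal{O}_X))$ are quasi-coherent sheaves on the small \'etale site of $X$: the former by the discussion in Section~\ref{thhsection}, the latter by Corollary~\ref{tatequasicoherent}. Moreover, by Theorem~\ref{smoothalgebras} the map $\gamma$ is compatible with \'etale base change, so it suffices to check that $\gamma$ induces an isomorphism on global sections over each affine open $U = \Spec(A)$ \'etale over $X$. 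Thus we are reduced to proving: for $f \colon k \to A$ smooth of relative dimension $d$, the map $\gamma \colon \TR_i^n(A;p) \to \hat{H}^{-i}(C_{p^n},\THH(A))$ is an isomorphism for $i \geqslant d$.

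For the affine case I would compare the two conditionally convergent spectral sequences abutting to $\pi_*$ of, respectively, $\TR^n(A;p) = \THH(A)^{C_{p^{n-1}}}$ and $\hat{H}^{\boldsymbol\cdot}(C_{p^n},\THH(A))$. The key input is the cofibration sequence recalled at the beginning of Section~\ref{cyclotomicsection}: the map $\gamma \colon \TR^n(A;p) \to \hat{H}^{\boldsymbol\cdot}(C_{p^n},\THH(A))$ fits into a commuting ladder whose top row involves the norm/restriction cofibration sequence for the $C_{p^{n-1}}$-fixed points and whose bottom row is the Tate cofibration sequence for $C_{p^n}$; the left-hand vertical maps are identities, so $\gamma$ is an equivalence precisely on the part of homotopy unaffected by the homotopy orbit term $H_{\boldsymbol\cdot}(C_{p^n},\THH(A))$. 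Concretely, since $\THH(A)$ is bounded below (connective), the homotopy-orbit spectrum $H_{\boldsymbol\cdot}(C_{p^n},S^{\mathfrak g}\otimes\THH(A))$ has homotopy groups built from $\THH_j(A) = \Omega_A^j$, which vanish for $j > d$; combined with the fact (from Lemma~\ref{finitefiltration} and~\cite[Lemma~5.4]{hm}) that the classes $t$ and the divided Bott class $\alpha$ are infinite cycles and $d^{2n+1}(u_n\otimes\omega) = \lambda\cdot t^{n+1}\alpha^n\omega$, one finds that the comparison map is an isomorphism on $E^2$-terms in total degree $-i$ with $i \geqslant d$, since in that range the contribution of the exterior generator $u_n$ (which is what distinguishes the fixed-point tower from the Tate construction at $C_{p^n}$-level) has already been killed. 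Strong convergence of both spectral sequences, again from Lemma~\ref{finitefiltration}, then upgrades the $E^\infty$-comparison to the claimed isomorphism on homotopy groups in the stated range.

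The main obstacle I anticipate is the careful bookkeeping in the comparison of the two spectral sequences: one must match up the skeletal filtration of $\tilde E = S^{\C^\infty}$ underlying the Tate construction for $C_{p^n}$ with the tower $\TR^{n+1} \xrightarrow{R} \TR^n$ of fixed points, keeping track of the shift in filtration degree (the $C_{p^n}$-Tate spectral sequence lives on $\Lambda\{u_n\}\otimes S\{t^{\pm1}\}\otimes\pi_*(\THH(A))$, whereas the $\TR^n$-tower contributes only the non-negatively-graded part before the norm map is inverted). Making precise exactly which range of total degrees is "above the orbit term" requires combining the bound $\THH_j(A) = 0$ for $j > d$ with the location of the single nontrivial differential $d^{2n+1}$, and one must check that no later differentials interfere — this follows because, by the Leibniz rule and the fact that $t$ and $\alpha$ are permanent cycles, all differentials are determined by $d^{2n+1}(u_n)$, so $E^{2n+2} = E^\infty$ in both spectral sequences. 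Once this degree count is in hand, the rest is the formal ladder argument with the cofibration sequences, together with the quasi-coherence and \'etale-descent reductions, which are already supplied by Corollary~\ref{tatequasicoherent} and Proposition~\ref{tatedescent}.
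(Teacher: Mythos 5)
Your opening reduction to the affine case via quasi-coherence of both sides is the same first step as the paper's proof (which reduces further, by \'{e}tale base change, to $X = \mathbb{A}^d_k$). The core of your argument, however, does not work. First, there is no spectral sequence of the kind you propose converging to $\TR_*^n(A;p) = \pi_*(\THH(A)^{C_{p^{n-1}}})$: the skeletal filtrations of $E_+$ and $\tilde{E}$ compute the homotopy orbit, homotopy fixed point, and Tate constructions, not the \emph{genuine} fixed points, and the assertion that the genuine fixed points agree with these in a range of degrees is precisely the content of the proposition. A ``comparison of the two spectral sequences'' therefore presupposes what is to be proved. Second, your vanishing claim is false: $\THH_j(A)$ is not $\Omega_A^j$ --- that is the HKR description of ordinary Hochschild homology, whereas by B\"{o}kstedt periodicity and Theorem~\ref{smoothalgebras} one has $\THH_j(A) \cong \bigoplus_{m \geqslant 0}\Omega_A^{j-2m}$, which is nonzero for every $j \geqslant 0$. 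In any case the homotopy orbit spectrum of a connective spectrum is bounded \emph{below}, not above, so the orbit term in your ladder is not killed in high degrees; it is killed in low degrees. Since nothing in your argument invokes the cyclotomic structure map in an essential way or B\"{o}kstedt's periodicity theorem, it cannot be correct: the analogous statement fails for a general ring.

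What is actually needed, and what the paper does, is the following. By Tsalidis's theorem \cite[Theorem~2.4]{tsalidis} the case of general $n$ follows from the case $n = 1$, namely that the cyclotomic structure map $\gamma \colon \THH(A) \to \hat{H}^{\boldsymbol{\cdot}}(C_p,\THH(A))$ induces an isomorphism on homotopy groups in degrees $\geqslant d$. For $A = k$ this is \cite[Proposition~5.3]{hm}, which rests on B\"{o}kstedt periodicity. For $A = k[x_1,\dots,x_d]$, to which the general affine case reduces, one uses the equivalence of $C_p$-spectra $\THH(k) \otimes N^{\cy}(\langle x_1,\dots,x_d\rangle) \simeq \THH(k[x_1,\dots,x_d])$, its compatibility with $\gamma$, and the fact that the reduced homology of $N^{\cy}(\langle x_1,\dots,x_d\rangle)$ is concentrated in degrees $0 \leqslant i \leqslant d$ (via $\Omega^i_{\Z[x_1,\dots,x_d]/\Z} \cong \tilde{H}_i$); tensoring the degree-zero case with a space whose homology lives in $[0,d]$ then yields the isomorphism in degrees $\geqslant d$. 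The Tate spectral sequence of Lemma~\ref{finitefiltration} can only enter after this Segal-conjecture-type input has been established.
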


\begin{proof}The domain and target $W_n(\mathcal{O}_X)$-modules of the
map in question both are quasi-coherent. Therefore, it suffices to
take $X = \mathbb{A}_k^d$ and show that the induced map of global
sections is an isomorphism for $i \geqslant d$. Moreover,
by~\cite[Theorem~2.4]{tsalidis}, it suffices to consider the case
$n = 1$. Now, in the case $d = 0$, the result was proved
in~\cite[Proposition~5.3]{hm}; see also loc.~cit., Remark~5.5. In
general, we have an equivalence of $C_p$-spectra
$$\xymatrix{
{ \THH(k) \otimes N^{\cy}(\langle x_1,\dots,x_d \rangle) }
\ar[r]^-{\alpha} &
{ \THH(k[x_1,\dots,x_d]) } \cr
}$$
where the second factor on the left-hand side is the cyclic
bar-construction of the free commutative monoid on the indicated
generators. We refer to~\cite[Proposition~3]{h3} for the definition of
the map $\alpha$ and for the proof that it is a weak
equivalence. Moreover, there is a commutative diagram of spectra
$$\xymatrix{
{ \THH(k) \otimes N^{\cy}(\langle x_1,\dots,x_d \rangle) }
\ar[r]^-{\alpha} \ar[d]^-{\gamma \otimes \id} &
{ \THH(k[x_1,\dots,x_d]) } \ar[d]^-{\gamma} \cr
{ \hat{H}^{\boldsymbol{\cdot}}(C_p,\THH(k)) \otimes N^{\cy}(\langle
  x_1,\dots,x_d \rangle) } \ar[r]^-{\hat{\alpha}} &
{ \hat{H}^{\boldsymbol{\cdot}}(C_p,\THH(k[x_1,\dots,x_d])) } \cr
}$$
and the lower horizontal map also is a weak equivalence;
see~\cite[Corollary~9.1]{hm}. We claim that the reduced singular
homology of $N^{\cy}(\langle x_1,\dots,x_d \rangle)$ is concentrated
in the degrees $0 \leqslant i \leqslant d$. Granting this, we conclude
from the case $d = 0$ that the left-hand vertical map in the diagram
above induces an isomorphism of homotopy groups in degrees greater
than or equal to $d$. But then the same holds for the right-hand
vertical map, which proves the proposition. To prove the claim we use
that there are canonical isomorphisms
$$\xymatrix{
{ \Omega_{\Z[x_1,\dots,x_d]/\Z}^i } \ar[r] &
{ \HH_i(\Z[x_1,\dots,x_d]/\Z) } \ar[r] &
{ \tilde{H}_i(N^{\cy}(\langle x_1,\dots,x_d \rangle),\Z) } \cr
}$$
and the easy calculation of the left-hand side.
\end{proof}

\begin{theorem}\label{tatesheaf}Let $k$ be a perfect field
of characteristic $p > 0$, let $n$ be a positive integer, and let $f
\colon X \to \Spec(k)$ be a smooth morphism of finite relative
dimension. In this situation, the canonical map of graded
$W_n(\mathcal{O}_X)$-algebras
$$\xymatrix{
{ W_n\Omega_X^* \otimes_{f^*W_n(k)} f^*\hat{H}^{-*}(C_{p^n},\THH(k)) }
\ar[r]^-{\hat{\eta}} &
{ \hat{H}^{-*}(C_{p^n},\THH(\mathcal{O}_X)) } \cr
}$$
is an isomorphism.
\end{theorem}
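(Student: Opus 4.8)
The plan is to show that the cyclotomic structure map $\gamma$ becomes the map $\hat\eta$ after inverting the divided Bott element, and that it is then an isomorphism by B\"{o}kstedt periodicity together with the analytic continuation result of Proposition~\ref{continuation}. I would begin with the case $X = \Spec(k)$. By Theorem~\ref{periodicitytheorem} the graded ring $\TR_*^n(k;p) = W_n(k)[\alpha_n]$ is polynomial on the free module generator $\alpha_n \in \TR_2^n(k;p)$, while the Tate spectral sequence---computed as in the proof of Lemma~\ref{finitefiltration} with $A = k$---shows that $\hat{H}^{-*}(C_{p^n},\THH(k))$ is a free $W_n(k)$-module that is $2$-periodic. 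The map $\gamma \colon \TR_i^n(k;p) \to \hat{H}^{-i}(C_{p^n},\THH(k))$ is an isomorphism for $i \geqslant 0$ by the case $d = 0$ of Proposition~\ref{continuation}, so it carries $\alpha_n$ to a $W_n(k)$-module generator of $\hat{H}^{-2}(C_{p^n},\THH(k))$, hence to a unit; a bootstrap along the $2$-periodicity of the target then upgrades this to an isomorphism of graded rings
$$\TR_*^n(k;p)[\alpha_n^{-1}] \xrightarrow{\ \sim\ } \hat{H}^{-*}(C_{p^n},\THH(k)).$$

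Next I would reduce the general statement to the affine case. The target of $\hat\eta$ is a quasi-coherent $W_n(\mathcal{O}_X)$-module by Corollary~\ref{tatequasicoherent}, and the source is one because $W_n\Omega_X^*$ is quasi-coherent and $\hat{H}^{-*}(C_{p^n},\THH(k))$ is a free $W_n(k)$-module by the previous paragraph; so it suffices to prove that $\hat\eta$ is an isomorphism on sections over a connected affine $X = \Spec(A)$, with $A$ smooth over $k$ of relative dimension $d$. By naturality of $\gamma$ along the structure morphism $\Spec(A) \to \Spec(k)$, the map $\hat\eta$ fits into a commutative square whose remaining three sides are the isomorphism of Theorem~\ref{smoothalgebras}, the map $\gamma \colon \TR_*^n(A;p) \to \hat{H}^{-*}(C_{p^n},\THH(A))$, and the map $\id \otimes \gamma$ on $W_n\Omega_A^* \otimes_{W_n(k)} \TR_*^n(k;p)$. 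By the case $X = \Spec(k)$, the last of these is, under Theorem~\ref{smoothalgebras}, the localization $\TR_*^n(A;p) \to \TR_*^n(A;p)[\alpha_n^{-1}]$; and $\gamma(\alpha_n) \in \hat{H}^{-2}(C_{p^n},\THH(A))$ is a unit, being the image of the unit $\gamma(\alpha_n) \in \hat{H}^{-2}(C_{p^n},\THH(k))$ under a ring homomorphism. Hence $\hat\eta$ is identified with the unique map $\bar\gamma \colon \TR_*^n(A;p)[\alpha_n^{-1}] \to \hat{H}^{-*}(C_{p^n},\THH(A))$ through which $\gamma$ factors.

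It then remains to see that $\bar\gamma$ is an isomorphism. By Theorem~\ref{smoothalgebras}, $\TR_j^n(A;p) = \bigoplus_{m \geqslant 0} W_n\Omega_A^{j-2m} \cdot \alpha_n^m$, and since $W_n\Omega_A^j = 0$ for $j > d$, the localization map $\TR_*^n(A;p) \to \TR_*^n(A;p)[\alpha_n^{-1}]$ is an isomorphism in degrees $\geqslant d$; together with the case $i \geqslant d$ of Proposition~\ref{continuation} this shows that $\bar\gamma$ is an isomorphism in degrees $\geqslant d$. Both source and target of $\bar\gamma$ are $2$-periodic---the source since $\alpha_n$ is invertible, the target since it is a module over the $2$-periodic ring $\hat{H}^{-*}(C_{p^n},\THH(k))$ through $f$---and $\bar\gamma$ intertwines the two periodicity operators because it takes $\alpha_n$ to a unit. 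Therefore $\bar\gamma$ is an isomorphism in degree $j$ if and only if it is one in degree $j + 2$, and since every degree is congruent modulo $2$ to a degree $\geqslant d$, it follows that $\bar\gamma$, and hence $\hat\eta$, is an isomorphism.

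The only genuinely difficult ingredient is Proposition~\ref{continuation}, which has already been established; in the argument above the step demanding the most care is the identification of $\hat\eta$ with the localization $\bar\gamma$ of $\gamma$ at $\alpha_n$, which combines the base change isomorphism of Theorem~\ref{smoothalgebras} with the explicit description of $\hat{H}^{-*}(C_{p^n},\THH(k))$ furnished by B\"{o}kstedt's periodicity theorem.
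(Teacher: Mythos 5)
Your proposal is correct and follows essentially the same route as the paper: the same commutative square relating $\eta$ and $\hat\eta$ via the cyclotomic structure maps, with the top map an isomorphism by Theorem~\ref{smoothalgebras}, the right-hand map an isomorphism in degrees $\geqslant d$ by Proposition~\ref{continuation}, and the conclusion obtained from the $2$-periodicity of the bottom row. You merely make explicit two steps the paper leaves implicit, namely the reduction to affines via quasi-coherence and the verification that $\hat\eta$ intertwines the periodicity operators because $\gamma(\alpha_n)$ is a unit.
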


\begin{proof}We consider the commutative diagram of graded
$W_n(\mathcal{O}_X)$-algebras in which the horizontal maps are the
canonical maps and the vertical maps are induced by the cyclotomic
structure maps,
$$\xymatrix{
{ W_n\Omega_X^* \otimes_{f^*W_n(k)} f^*\TR_*^n(k;p) } \ar[r]^-{\eta}
\ar[d]^-{\id \otimes \gamma} &
{ \TR_*^n(\mathcal{O}_X;p) } \ar[d]^-{\gamma} \cr
{ W_n\Omega_X^* \otimes_{f^*W_n(k)} f^*\hat{H}^{-*}(C_{p^n},\THH(k)) }
\ar[r]^-{\hat{\eta}} & 
{ \hat{H}^{-*}(C_{p^n},\THH(\mathcal{O}_X)). } \cr
}$$
We recall from~\cite[Theorem~B]{h} that the top horizontal map is an
isomorphism and from Proposition~\ref{continuation} that the
right-hand vertical map is an isomorphism in degrees greater than or
equal to the relative dimension of $f \colon X \to \Spec(k)$. Since the
graded $W_n(\mathcal{O}_X)$-algebras in the bottom row both are
2-periodic, the theorem follows. 
\end{proof}


\begin{theorem}\label{hodge}If $X$ is a scheme smooth and proper
morphism over a perfect field $k$ of characteristic $p > 0$, then
there is a spectral sequence of $\,W(k)$-modules 
$$\textstyle{ E_{i,j}^2 = \bigoplus_{m \in \Z} \lim_{n,F}
H^{-i}(X,W_n\Omega_X^{j+2m}) \Rightarrow
\TP_{i+j}(X) }$$
which converges strongly.
\end{theorem}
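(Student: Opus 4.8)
The plan is to build the spectral sequence at each finite level $C_{p^n}$ by \'{e}tale descent, to identify its $E^2$-page using the structural results of Sections~\ref{thhsection}--\ref{hodgesection}, and then to assemble these by passing to the homotopy limit over $n$. First I would observe that, $X$ being smooth over the perfect field $k$, Theorem~\ref{smoothalgebras} in the case $n = 1$ identifies $\THH_*(\mathcal{O}_X)$ with $\Omega_X^* \otimes_{f^*k} f^*\THH_*(k)$, a sheaf of $p$-torsion $\mathcal{O}_X$-modules; consequently the descent spectral sequence of Section~\ref{thhsection}, whose abutment filtration has length at most $\dim X + 1$, shows that $\THH(X)$ is bounded below with $p$-power torsion homotopy groups, hence $p$-complete and bounded below. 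Lemma~\ref{tatetower} then supplies a weak equivalence
$$\TP(X) = \hat{H}^{\boldsymbol{\cdot}}(\mathbb{T},\THH(X)) \xrightarrow{\ \sim\ } \holim_n \hat{H}^{\boldsymbol{\cdot}}(C_{p^n},\THH(X)),$$
the structure maps being the restriction maps in Tate cohomology.

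Fix $n \geqslant 1$. By Proposition~\ref{tatedescent}, the presheaf $U \mapsto \hat{H}^{\boldsymbol{\cdot}}(C_{p^n},\THH(U))$ satisfies \'{e}tale descent on $X$, so there is a conditionally convergent hypercohomology spectral sequence
$$E_{i,j}^2(n) = H^{-i}(X,\hat{H}^{-j}(C_{p^n},\THH(\mathcal{O}_X))) \Rightarrow \hat{H}^{-i-j}(C_{p^n},\THH(X)).$$
B\"{o}kstedt periodicity (Theorem~\ref{periodicitytheorem}), the fact that the cyclotomic structure map $\TR_i^n(k;p) \xrightarrow{\ \gamma\ } \hat{H}^{-i}(C_{p^n},\THH(k))$ is an isomorphism for $i \geqslant 0$ by~\cite[Proposition~5.3]{hm}, and the inverse divided Bott element of Section~\ref{dividedbottsection} together identify $\hat{H}^{-*}(C_{p^n},\THH(k))$ with the $2$-periodic graded ring $W_n(k)[v_n^{\pm1}]$, free of rank one over $W_n(k)$ in each even degree and zero in odd degrees, where $\deg v_n = -2$. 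Feeding this into Theorem~\ref{tatesheaf} gives an isomorphism of sheaves of $W_n(\mathcal{O}_X)$-modules
$$\hat{H}^{-j}(C_{p^n},\THH(\mathcal{O}_X)) \;\cong\; \bigoplus_{m \in \Z} v_n^m \cdot W_n\Omega_X^{j+2m},$$
and hence $E_{i,j}^2(n) = \bigoplus_{m \in \Z} H^{-i}(X,W_n\Omega_X^{j+2m})$. Since $f$ is proper of some relative dimension $d$, the sheaves $W_n\Omega_X^a$ vanish outside $0 \leqslant a \leqslant d$ (so only finitely many $m$ contribute in each bidegree), and each $H^{-i}(X,W_n\Omega_X^a)$ is a finitely generated $W_n(k)$-module by~\cite[Proposition~II.2.1]{illusie}; as the abutment filtration has finite length by Lemma~\ref{finitefiltration}, each $E(n)$ converges strongly.

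Finally I would pass to the homotopy limit over $n$. The restriction maps in Tate cohomology induce maps of spectral sequences $E(n) \to E(n-1)$, and under the identifications above the transition map $\hat{H}^{-j}(C_{p^n},\THH(\mathcal{O}_X)) \to \hat{H}^{-j}(C_{p^{n-1}},\THH(\mathcal{O}_X))$ sends $v_n$ to $v_{n-1}$ (because $F(\alpha_n) = \alpha_{n-1}$ by Remark~\ref{unitremark} and $\gamma$ is compatible with restriction) and restricts to the Witt-vector Frobenius on $\TR_0^n(\mathcal{O}_X;p) = W_n(\mathcal{O}_X)$, hence, being a map of $p$-typical Witt complexes, to the de~Rham--Witt Frobenius $F \colon W_n\Omega_X^a \to W_{n-1}\Omega_X^a$ on each summand; this is the compatibility recorded in Section~\ref{dividedbottsection} and~\cite[Addendum~5.4.4]{hm4}. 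Since $X$ is proper, every tower $\{ H^{-i}(X,W_n\Omega_X^a) \}_n$ is levelwise finitely generated, hence satisfies the Mittag--Leffler condition, so $\lim^1_n$ vanishes on the $E^2$-, $E^\infty$-, and abutment towers, the last being of uniformly finite length; forming $\holim_n$ of the conditionally convergent tower $\{ E(n) \}$ therefore yields a spectral sequence of $W(k)$-modules with
$$E_{i,j}^2 = \bigoplus_{m \in \Z} \lim_{n,F} H^{-i}(X,W_n\Omega_X^{j+2m})$$
and abutment $\TP_{i+j}(X)$, identified via the first step, and, its $E^2$-page being concentrated in the strip $0 \leqslant -i \leqslant d$, it converges strongly by Boardman's criterion~\cite[Theorem~7.1]{boardman}.

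The hard part is precisely this last step: pinning down the transition maps in the tower of $E^2$-pages as the de~Rham--Witt Frobenius, so that the limit is $\lim_{n,F}$ exactly as in the conjugate spectral sequence of Theorem~\ref{conjugate}, and verifying that forming $\holim_n$ of a tower of merely conditionally convergent spectral sequences again produces a strongly convergent spectral sequence with the expected $E^2$-page --- which is where the properness of $f$ is genuinely used.
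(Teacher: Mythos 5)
Your proposal is correct and follows essentially the same route as the paper: reduce $\TP(X)$ to $\holim_n\hat{H}^{\boldsymbol{\cdot}}(C_{p^n},\THH(X))$ via $p$-completeness and boundedness below, use Proposition~\ref{tatedescent} and Theorem~\ref{tatesheaf} to identify the finite-level descent spectral sequences, and pass to the limit over $n$ using finite generation (Illusie) and Mittag--Leffler. The only quibble is that strong convergence at each finite level comes from conditional convergence plus the cohomological-dimension bound $-d\leqslant i\leqslant 0$ (which you also note), not from Lemma~\ref{finitefiltration}, which concerns the Tate spectral sequence rather than the descent spectral sequence.
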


\begin{proof}We first argue that the canonical morphism
$$\xymatrix{
{ \TP(X) = \hat{H}^{\boldsymbol{\cdot}}(\mathbb{T},\THH(X)) } \ar[r] &
{ \holim_{n,F} \hat{H}^{\boldsymbol{\cdot}}(C_{p^n},\THH(X)) } \cr
}$$
is a weak equivalence. It follows from~\cite[Lemma~2.1.1]{h8} that
this morphism becomes a weak equivalence after $p$-completion, and
therefore, it suffices to show that the domain and target are
$p$-complete. This, in turn, follows from the definition of
the Tate spectrum, once we shows that $\THH(X)$ is $p$-complete and
bounded below. That the spectrum $\THH(X)$ is $p$-complete is an
immediate consequence of its homotopy groups being $k$-vector spaces,
and hence, annihilated by $p$. To see that it is bounded below, we
consider the conditionally convergent descent spectral sequence of
$k$-vector spaces
$$E_{i,j}^2 = H^{-i}(X,\THH_j(\mathcal{O}_X)) \Rightarrow
\THH_{i+j}(X).$$
Since the structure morphism $f \colon X \to \Spec(k)$ is smooth and
proper, its relative dimension $d$ is finite. Therefore, the
$k$-vector space $\smash{ E_{i,j}^2 }$ is zero, unless
$-d \leqslant i \leqslant 0$ and $j \geqslant 0$, which shows that the
spectral sequence converges strongly and that $\THH_j(X)$ is zero for
$j < -d$. So the canonical morphism at the beginning of the proof is a
weak equivalence as desired.

Next, by Proposition~\ref{tatedescent}, we have descent spectral
sequences
$$E_{i,j}^2 = H^{-i}(X,\hat{H}^{-j}(C_{p^n},\THH(\mathcal{O}_X)))
\Rightarrow \hat{H}^{-i-j}(C_{p^n},\THH(X)),$$
which converge strongly. Indeed, the spectral
sequences converge conditionally, and $\smash{ E_{i,j}^2 }$ is zero,
unless $-d \leqslant i \leqslant 0$. Moreover, identifying
$$\hat{H}^{-*}(C_{p^n},\THH(k)) = S_{W_n(k)}\{v^{\pm1}\}$$
with $v \in \TP_{-2}(k)$, Theorem~\ref{tatesheaf} gives canonical
isomorphisms of $W_n(\mathcal{O}_X)$-modules
$$\xymatrix{
{ \bigoplus_{m \in \Z} W_n\Omega_X^{j+2m} } \ar[r] &
{ \hat{H}^{-j}(C_{p^n},\THH(\mathcal{O}_X)), } \cr
}$$
and hence, we may rewrite the descent spectral sequences above in the
form
$$\textstyle{ E_{i,j}^2 = \bigoplus_{m \in \Z}
  H^{-i}(X,W_n\Omega_X^{j+2m}) 
\Rightarrow \hat{H}^{-i-j}(C_{p^n},\THH(X)). }$$
We claim that the $W_n(k)$-module $\smash{ E_{i,j}^2 }$ is finitely generated
for all values of $(i,j)$. Indeed, since $f \colon X
\to \Spec(k)$ is smooth and proper of relative dimension $d$, it
follows from~\cite[Proposition~II.2.1]{illusie} that the
$W_n(k)$-module $\smash{ H^{-i}(X,W_n\Omega_X^{j+2m}) }$ is finitely
generated, for all integers $m$. But unless $0 \leqslant j+2m
\leqslant d$, said $W_n(k)$-module is zero, so the claim
follows. Moreover, for cohomological dimension reasons, $\smash{
  E_{i,j}^2 }$ is zero, unless $-d \leqslant i \leqslant 0$, so
we conclude that also $\smash{ \hat{H}^{-i}(C_{p^n},\THH(X)) 
}$ is a finitely generated  $W_n(k)$-module, for all integers
$i$. Hence, taking limits with respect to the Frobenius maps, we
obtain a spectral sequence of $W(k)$-modules
$$\textstyle{ E_{i,j}^2 = \bigoplus_{m \in \Z} \lim_{n,F}
H^{-i}(X,W_n\Omega_X^{j+2m}) \Rightarrow \lim_{n,F}
\hat{H}^{-i-j}(C_{p^n},\THH(X)), }$$
and the corresponding higher derived limits vanish. Indeed, by finite
generation, the limit systems satisfy the Mittag-Leffler
condition. This also implies that the canonical map
$$\xymatrix{
{ \TP_i(X) } \ar[r] &
{ \lim_{n,F} \hat{H}^{-i}(C_{p^n},\THH(X)) } \cr
}$$
is an isomorphism, so the theorem follows.
\end{proof}

\begin{corollary}\label{hodgecorollary}Let $k$ be a finite field of order
$q =p^r$, let $W$ be its ring of $p$-typical Witt vectors, and let
$\iota \colon W \to \C$ be a choice of embedding. If $X$ is a scheme
smooth and proper over $k$, then there is a strongly convergent
spectral sequence of degree-wise finite dimensional $\C$-vector spaces
$$\textstyle{ E_{i,j}^2 = \bigoplus_{m \in \Z} (\lim_{n,F}
H^{-i}(X,W_n\Omega_X^{j+2m})) \otimes_{W,\iota} \C \Rightarrow
\TP_{i+j}(X) \otimes_{W,\iota} \C }.$$
\end{corollary}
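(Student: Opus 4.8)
The plan is to deduce the corollary directly from Theorem~\ref{hodge} by extension of scalars, exactly as Corollary~\ref{conjugatecorollary} was deduced from Theorem~\ref{conjugate}. First I would invoke Theorem~\ref{hodge} to obtain the strongly convergent spectral sequence of $W(k) = W$-modules
$$\textstyle{ E_{i,j}^2 = \bigoplus_{m \in \Z} \lim_{n,F}
H^{-i}(X,W_n\Omega_X^{j+2m}) \Rightarrow \TP_{i+j}(X). }$$
Since $\iota \colon W \to \C$ is a ring homomorphism between Noetherian rings with $W$ a discrete valuation ring and $\C$ a field, $\iota$ is flat; hence applying $- \otimes_{W,\iota}\C$ to the spectral sequence is exact on each page and preserves the abutment, yielding a spectral sequence of $\C$-vector spaces with $E_{i,j}^2 = \bigl(\bigoplus_{m} \lim_{n,F} H^{-i}(X,W_n\Omega_X^{j+2m})\bigr)\otimes_{W,\iota}\C$ converging to $\TP_{i+j}(X)\otimes_{W,\iota}\C$, and tensoring commutes with the finite direct sum over the relevant range of $m$.

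The remaining point is degree-wise finite dimensionality of the $E^2$-term over $\C$. Here I would argue as in the proof of Corollary~\ref{conjugatecorollary}: only the summands with $0 \leqslant j+2m \leqslant d$ contribute, where $d$ is the (finite) relative dimension of the smooth and proper morphism $f \colon X \to \Spec(k)$, so the direct sum is finite. For each such summand, $\lim_{n,F} H^{-i}(X,W_n\Omega_X^{j+2m})$ is, by \cite[Th\'{e}or\`{e}me~III.2.2]{illusieraynaud}, a finitely generated $W$-module modulo torsion, and by Remark~\ref{conjugateremark} its torsion submodule is annihilated by $p$. Since $\iota$ kills $p$-torsion — $\C$ is a $\Q$-vector space, so $\ker(\iota) \otimes$ contribution vanishes after tensoring — the $\C$-vector space $\lim_{n,F} H^{-i}(X,W_n\Omega_X^{j+2m}) \otimes_{W,\iota}\C$ has dimension equal to the rank of the free quotient, which is finite. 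Hence each $E_{i,j}^2$ is a finite direct sum of finite-dimensional $\C$-vector spaces, so is finite-dimensional.

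Strong convergence is inherited: Theorem~\ref{hodge} already provides strong convergence of the integral spectral sequence, and strong convergence is preserved under the exact functor $-\otimes_{W,\iota}\C$, since it commutes with the formation of the $E^\infty$-term and with the finite filtration of the abutment (the filtration being finite because $E_{i,j}^2$ vanishes outside $-d \leqslant i \leqslant 0$). I do not anticipate a serious obstacle; the only subtlety is to make sure that the torsion appearing in the $E^2$-term really is killed by $\iota$, which is immediate once one recalls from Remark~\ref{conjugateremark} that this torsion is $p$-torsion and that $\iota \colon W \to \C$ factors through the fraction field of $W$.
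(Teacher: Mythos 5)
Your proposal is correct and follows essentially the same route as the paper: invoke Theorem~\ref{hodge}, observe that the spectral sequence is concentrated in the strip $-d \leqslant i \leqslant 0$ so that flat base change along $\iota$ preserves strong convergence, and deduce finite dimensionality of the $E^2$-term from \cite[Th\'{e}or\`{e}me~III.2.2]{illusieraynaud} exactly as in Corollary~\ref{conjugatecorollary}. The only cosmetic point is that flatness of $\iota$ is best justified, as you note at the end, by the fact that it factors through the fraction field of $W$ (a map from a discrete valuation ring to a field need not be flat in general).
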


\begin{proof}We have the strongly convergent spectral sequence of
$W$-modules
$$\textstyle{ E_{i,j}^2 = \bigoplus_{m \in \Z} \lim_{n,F}
H^{-i}(X,W_n\Omega_X^{j+2m}) \Rightarrow
\TP_{i+j}(X) }$$
from Theorem~\ref{hodge}. For cohomological dimension
reasons, it is concentrated in the strip $-d \leqslant i \leqslant 0$,
where $d$ is the relative dimension of $X$ over $k$. Therefore, we obtain the desired spectral sequence by extension of
scalars along the flat ring homomorphism $\iota \colon W \to \C$. Finally,
by~\cite[Th\'{e}or\`{e}me~III.2.2]{illusieraynaud}, we see that
the $\C$-vector spaces $\smash{ E_{i,j}^2 }$ are finite dimensional as
in the proof of Corollary~\ref{conjugatecorollary}.
\end{proof}

\section{Regularized determinants}\label{regularizedsection}

In this section, we recall the definitions of regularized determinant
and anomalous dimension following Deninger~\cite{deninger} and
complete the proof of Theorem~\ref{main}.

The definition requires a choice of principal branch
$\operatorname{Arg}(\lambda)$ of the argument of $\lambda \in \C^*$.
We follow~\cite[Section~1]{deninger} and choose
$-\pi < \operatorname{Arg}(\lambda) \leqslant \pi$ and define
$$\lambda^{-s} = \lvert \lambda \rvert^{-s}
e^{-is\operatorname{Arg}(\lambda)}$$
for $\lambda \in \C^*$ and $s \in \C$. Let $V$ be a $\C$-vector
space\footnote{\,There is no topology on $V$; in particular, it is not
  a Hilbert space.} of at most countably infinite dimension and let
$\Theta \colon V \to V$ be a $\C$-linear map such that:
\begin{enumerate}
\item[(1)]For every $\lambda \in \C$, the generalized eigenspace
$V_{\lambda}$ of $\Theta \colon V \to V$ associated to the eigenvalue
$\lambda$ is finite dimensional.
\item[(2)]The Dirichlet series $\textstyle{ \sum_{\lambda \in \C^*}
    \dim_{\C}(V_{\lambda})\lambda^{-s} }$
converges absolutely for $\operatorname{Re}(s) \gg 0$ and admits a
meromorphic continuation $\zeta_{\Theta}(s)$ to the halfplane
$\operatorname{Re}(s) > -\epsilon$ for some $\epsilon > 0$ which is
holomorphic at $s = 0$.
\end{enumerate}
The anomalous dimension and regularized determinant of $\Theta \colon
V \to V$ are now defined to be the complex numbers
$$\begin{aligned}
\dim_{\infty}(\Theta \mid V)
{} & = \; \dim_{\C}(V_0) + \zeta_{\bar{\Theta}}(0), \cr
\textstyle{ \det_{\infty}(\Theta \mid V)\, } 
{} & = \, \begin{cases}
e^{-\zeta_{\Theta}'(0)} & \text{if $\dim_{\C}(V_0) = 0$,} \cr
0 & \text{if $\dim_{\C}(V_0) > 0$,} \cr
\end{cases} \cr
\end{aligned}$$
where, in the top line, $\bar{\Theta} \colon V/V_0 \to V/V_0$ is the
map induced by $\Theta \colon V \to V$. If $V$ is finite dimensional,
then $\dim_{\infty}(\Theta \mid V) = \dim_{\C}(V)$ and
$\det_{\infty}(\Theta \mid V) = \det(\Theta \mid V)$. Moreover, if
$\delta$ is a positive real number, then one has
$$\textstyle{ \det_{\infty}(\delta \cdot \Theta \mid V) =
  \delta^{\dim_{\infty}(\Theta \mid V)}\det_{\infty}(\Theta \mid V), }$$
as expected. More importantly, by~\cite[Lemma~1.2]{deninger}, given a
commutative diagram
$$\xymatrix{
{ 0 } \ar[r] &
{ V' } \ar[r] \ar[d]^-{\Theta'} &
{ V } \ar[r] \ar[d]^-{\Theta} &
{ V'' } \ar[r] \ar[d]^-{\Theta''} &
{ 0 } \cr 
{ 0 } \ar[r] &
{ V' } \ar[r] &
{ V } \ar[r] &
{ V'' } \ar[r] &
{ 0 } \cr
}$$
of $\C$-vector spaces and $\C$-linear maps with exact rows such that
the right- and left-hand vertical maps satisfy~(1)--(2),
then so does the middle vertical map and
$$\begin{aligned}
\dim_{\infty}(\Theta \mid V) 
{} & = \dim_{\infty}(\Theta' \mid V') + \dim_{\infty}(\Theta'' \mid
V'') \cr
\textstyle{ \det_{\infty}(\Theta \mid V) }
{} & = \textstyle{ \; \det_{\infty}(\Theta' \mid V) \;\, \cdot \,\; \det_{\infty}(\Theta''
\mid V''). } \cr
\end{aligned}$$
The following result is a special case
of~\cite[Corollary~2.8]{deninger}.

\begin{proposition}\label{deningerformula}Let $T$ be an
anticommutative graded $\,\C$-algebra such that the sub-$\C$-vector
space $T_j \subset T$ of homogeneous elements of degree $j$ is finite
dimensional for all integers $j$. Let $\Theta \colon T \to T$ be a
graded $\,\C$-linear derivation and suppose that there exists a unit
$v \in  T_{-2}$ such that
$\Theta(v) = \frac{2\pi i}{\log q} \cdot v$. In this situation,
$$\begin{aligned}
\textstyle{ \det_{\infty}(s \cdot \id - \,\Theta \mid T_{2*+ j}) }
{} & = \det(\id - q^{-s \cdot \Theta} \mid T_j) , \cr
\dim_{\infty}(s \cdot \id -\,\Theta \mid T_{2*+j})
{} & = \, 0. \cr
\end{aligned}$$
for all $s \in \C$ and for all integers $j$.
\end{proposition}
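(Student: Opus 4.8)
The plan is to reduce the statement to a direct computation with the regularized determinant using the grading produced by the unit $v \in T_{-2}$. First I would observe that multiplication by $v$ gives, for each integer $j$, a $\C$-linear isomorphism $T_j \xrightarrow{\,\sim\,} T_{j-2}$, so the graded subspace $T_{2*+j} = \bigoplus_{m \in \Z} T_{j+2m}$ is, as a $\C$-vector space, isomorphic to $T_j \otimes_{\C} \C[v^{\pm 1}]$, where $\C[v^{\pm1}]$ has basis $\{v^m\}_{m \in \Z}$. Since $\Theta$ is a graded derivation with $\Theta(v) = \tfrac{2\pi i}{\log q}\cdot v$, one has $\Theta(v^m) = m\cdot\tfrac{2\pi i}{\log q}\cdot v^m$, and hence for $x \in T_j$ an eigenvector (or generalized eigenvector) of the induced endomorphism of $T_j$ with eigenvalue $\mu$, the element $x \otimes v^m$ is a (generalized) eigenvector of $\Theta$ on $T_{2*+j}$ with eigenvalue $\mu + m\cdot\tfrac{2\pi i}{\log q}$. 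This step is essentially formal, using only the Leibniz rule and the hypothesis on $\Theta(v)$.

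Next I would record the spectral decomposition. Writing $\Lambda_j$ for the multiset of eigenvalues of $\Theta$ on the finite-dimensional space $T_j$ (with multiplicity equal to the dimension of the generalized eigenspace), the eigenvalues of $s\cdot\id - \Theta$ on $T_{2*+j}$ form the multiset $\{\,s - \mu - m\tfrac{2\pi i}{\log q} : \mu \in \Lambda_j,\ m \in \Z\,\}$, each generalized eigenspace being finite dimensional, so hypothesis~(1) for the regularized determinant holds. For the zeta function $\zeta_{s\cdot\id-\Theta}$ of condition~(2), I would group the eigenvalues by the residue class they determine modulo $\tfrac{2\pi i}{\log q}\Z$: for a fixed $\mu \in \Lambda_j$ the contribution is $\sum_{m\in\Z}\big(s - \mu - m\tfrac{2\pi i}{\log q}\big)^{-w}$, a translate of the Hurwitz-type series whose analytic continuation and value (and derivative) at $w = 0$ are classical. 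This is exactly the computation carried out in the proof of \cite[Corollary~2.8]{deninger}, and I would cite that: the outcome is that, after exponentiating $-\zeta'_{s\cdot\id-\Theta}(0)$, the product over $m$ collapses via the identity $\prod_{m\in\Z}\big(s-\mu-m\tfrac{2\pi i}{\log q}\big) \leadsto 1 - e^{-(s-\mu)\log q} = 1 - q^{-s}q^{\mu}$ (interpreted through $\zeta$-regularization), and the product over $\mu\in\Lambda_j$ then assembles to $\det\big(\id - q^{-s}q^{\Theta}\mid T_j\big)$. Since $\Theta$ has no eigenvalue $0$ contributing a genuine kernel of $s\cdot\id-\Theta$ for generic $s$ — and more precisely the $V_0$-space is handled by the same residue-class bookkeeping — one checks that $\zeta_{s\cdot\id-\Theta}$ is holomorphic at $0$, giving condition~(2) and simultaneously the vanishing of the anomalous dimension $\dim_\infty(s\cdot\id-\Theta\mid T_{2*+j}) = 0$.

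Finally I would assemble the two displayed formulas. The anomalous-dimension statement follows because each residue class modulo $\tfrac{2\pi i}{\log q}\Z$ contributes anomalous dimension $0$ (the relevant Hurwitz zeta value at $0$ cancels against the dimension-count term), and there are only finitely many classes since $\Lambda_j$ is finite. The determinant statement is then the exponentiated derivative computation of the previous paragraph, with $q^{-s\cdot\Theta}$ in the paper's notation meaning $q^{-s}q^{\Theta}$ acting on the finite-dimensional $T_j$. I expect the main obstacle to be purely bookkeeping: namely making rigorous the passage from the formal product $\prod_{m\in\Z}(\cdots)$ to the closed form $1 - q^{-s}q^{\mu}$ via $\zeta$-regularization, including getting the normalization of the branch of the logarithm and the factor $\tfrac{2\pi i}{\log q}$ exactly right so that no spurious exponential prefactor survives — but this is precisely what \cite[Corollary~2.8]{deninger} supplies, so the proof reduces to verifying that our $T$, $\Theta$, and $v$ satisfy the hypotheses of that corollary and then quoting it.
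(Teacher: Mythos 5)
Your proposal is correct and follows exactly the route the paper itself takes: the paper offers no proof at all, simply declaring the proposition "a special case of~\cite[Corollary~2.8]{deninger}," which is precisely what you do after spelling out the eigenvalue bookkeeping via multiplication by $v$ and the regularized product $\prod_{m}\bigl(s-\mu-m\tfrac{2\pi i}{\log q}\bigr)\leadsto 1-q^{\mu}q^{-s}$ (the identity the paper's subsequent remark also highlights). Your reading of $q^{-s\cdot\Theta}$ as $q^{-s}q^{\Theta}$ is the intended one, as the application in the proof of Theorem~\ref{main} confirms.
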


\begin{remark}The two side of the equality in the statement of
Proposition~\ref{deningerformula} only depend on the parity of the
integer $j$. Indeed, we have the exact sequence
$$\xymatrix{
{ 0 } \ar[r] &
{ \frac{2 \pi i}{\log q}\Z } \ar[r] &
{ \C } \ar[r] &
{ \C^* } \ar[r] &
{ 0, } \cr
}$$
where the right-hand map takes $\alpha \in \C$ to
$q^{\alpha} \in \C^*$. In its most basic form, Deninger's formula
expresses the inverse non-archimedean Euler factor $1 - \lambda
q^{-s}$ with $\lambda \in \C^*$ as the regularized product of the
$s- \alpha$ with $\alpha \in \C$ ranging over all solutions to
$q^{\alpha} = \lambda$.
\end{remark}

\begin{proof}[Proof of Theorem~\ref{main}]By applying
Proposition~\ref{deningerformula} to the anticommutative graded
$\C$-algebra $\TP_*(X) \otimes_{W,\iota} \C$ and a graded
$\C$-linear derivation
$$\xymatrix{
{ \TP_*(X) \otimes_{W,\iota} \C } \ar[r]^-{\Theta} &
{ \TP_*(X) \otimes_{W,\iota} \C } \cr
}$$
as in the statement of the theorem, we find that for all $s \in \C$
and $j \in \Z$,
$$\textstyle{
\det_{\infty}(s \cdot \id - \,\Theta \mid \TP_{2*+j}(X)
\otimes_{W,\iota} \C) =\det(\id - q^{-s}\Fr_q^* \mid \TP_j(X)
\otimes_{W,\iota} \C). }$$
Moreover, by comparing the Hodge spectral sequence in
Corollary~\ref{hodgecorollary} and the conjugate spectral sequence in
Corollary~\ref{conjugatecorollary} and using that the determinant is
multiplicative on exact sequence, we furthermore conclude that
$$\det(\id - q^{-s}\Fr_q^* \mid \TP_j(X)
\otimes_{W,\iota} \C) = \begin{cases}
\det(\id - q^{-s}\Fr_q^* \mid
H_{\operatorname{crys}}^{\operatorname{od}}(X/W) \otimes_{W,\iota}
\C) & \cr
\det(\id - q^{-s}\Fr_q^* \mid
H_{\operatorname{crys}}^{\operatorname{ev}}(X/W) \otimes_{W,\iota}
\C) & \cr
\end{cases}$$
respectively, as $j$ is odd or even. Putting the two equalities
together, we find that
$$\frac{ \det_{\infty}(s \cdot \id - \,\Theta \mid
\TP_{\operatorname{od}}(X) \otimes_{W,\iota} \C) }{
\det_{\infty}(s \cdot \id - \,\Theta \mid \TP_{\operatorname{ev}}(X)
\otimes_{W,\iota} \C) }  = \frac{ \det(\id - q^{-s}  \Fr_q^* \mid
  H_{\operatorname{crys}}^{\operatorname{od}}(X/W)
  \otimes_{W,\iota}\C) }{ \det(\id - q^{-s}  \Fr_q^* \mid
  H_{\operatorname{crys}}^{\operatorname{ev}}(X/W)
  \otimes_{W,\iota}\C) },$$
and finally, it is proved
in~\cite[Th\'{e}or\`{e}me~VII.3.2.3]{berthelot} that the right-hand
side is equal to the Hasse-Weil zeta function $\zeta(X,s)$.
\end{proof}

By the analogue of analytic continuation,
Proposition~\ref{continuation}, the inverse Frobenius operator 
$\varphi^{-1}$ on $\TF_*(X;p)$ gives rise to a meromorphic Frobenius
operator 
$$\xymatrix{
{ \TP_*(X) } \ar[r]^-{\varphi} &
{ \TP_*(X), } \cr
}$$
which is defined and invertible after inverting $p$. We relate this
operator to the geometric Frobenius $\Fr_q^*$ as follows. By
functoriality, the Hodge spectral sequence
$$\textstyle{ E_{i,j}^2 = \bigoplus_{m \in \Z} \lim_{n,F}
H^{-i}(X,W_n\Omega_X^{\hskip1pt j+2m}) \Rightarrow
\TP_{i+j}(X) }$$
is a spectral sequence of graded $\TP_*(k)$-modules. Moreover, on
$\smash{ E_{i,j}^2 }$, and hence, on $\smash{ E_{i,j}^{\infty} }$, the
geometric Frobenius acts as
$$\Fr_q^* = q^w\varphi^r,$$
where $w = j+m$ is the weight and $q = p^r$.

\section*{Acknowledgements}

It is a great pleasure to thank the Hausdorff Research Institute for
Mathematics for its hospitality and support during the Trimester
Program ``Homotopy theory, manifolds, and field theories,'' where part
of the work reported here was conducted. I am also very grateful to Alain
Connes for helpful conversations and for announcing Theorem~\ref{main}
in his paper~\cite{connes1} and to Peter Scholze for explaining the
conjugate spectral sequence to me and for making me fully appreciate
the importance of B\"{o}kstedt's periodicity theorem.

\providecommand{\bysame}{\leavevmode\hbox to3em{\hrulefill}\thinspace}
\providecommand{\MR}{\relax\ifhmode\unskip\space\fi MR }
\providecommand{\MRhref}[2]{%
  \href{http://www.ams.org/mathscinet-getitem?mr=#1}{#2}
}
\providecommand{\href}[2]{#2}

\end{document}